\documentclass[12pt]{amsart}
\usepackage[margin=1.2in]{geometry}
\usepackage{latexsym,amssymb,amsmath,curves} %amscd,amssymb} %,epsfig}
\usepackage[usenames,dvipsnames]{xcolor}
\usepackage{pdfpages}
%\begin{document}

\theoremstyle{plain}
 \newtheorem{theorem}{Theorem}[section]
 \newtheorem{proposition}[theorem]{Proposition}
  \newtheorem{lemma}[theorem]{Lemma}
  \newtheorem{corollary}[theorem]{Corollary}

 \newtheorem{definition}[theorem]{Definition}
  \newtheorem{example}[theorem]{Example}

  \newtheorem{remark}[theorem]{Remark}

\numberwithin{equation}{section}

\def\RR{{\mathbb R}}

\newcommand{\old}[1]{}

%--------------------------------------------------

\begin{document}

\title[Shellability of face posets from electrical networks]{Shellability of face posets of electrical networks and  the CW poset property}

\author{Patricia Hersh}
\address{Department of Mathematics, University of Oregon, Eugene, OR, 97403}
\email{plhersh@uoregon.edu}

\author{Richard Kenyon}
\address{Department of Mathematics, Yale University, New Haven, CT, 06511}
\email{richard.kenyon@yale.edu}

\thanks{P. Hersh  is supported by NSF grants DMS-1500987 and DMS-1953931.  R. Kenyon is supported by NSF grant DMS-1713033 and the Simons Foundation award 327929.}

\begin{abstract}
We   prove  
a conjecture of Thomas Lam that the face posets of stratified spaces 
of planar resistor networks 
are shellable.  These posets are called uncrossing partial orders.  
This shellability result combines 
with Lam's previous result that these same posets are Eulerian to imply that they are CW posets, namely that they  are face posets of regular CW complexes.    Certain subposets of uncrossing partial orders  are shown to be isomorphic to type A Bruhat order intervals; our shelling is shown to coincide on these intervals with a Bruhat order shelling which was constructed by  Matthew Dyer using a reflection order.  

Our shelling for uncrossing posets also yields an explicit shelling for each interval in the face posets of the edge product spaces of phylogenetic trees, namely in the  Tuffley posets, by virtue of each interval in a Tuffley poset being isomorphic to an interval in an uncrossing poset.  This yields a  more explicit proof of the result of Gill, Linusson, Moulton and Steel  that the CW decomposition of Moulton and Steel for the edge product space of phylogenetic trees is a  regular CW decomposition.  
\end{abstract}

\maketitle

\section{Introduction}

We prove
a  conjecture of Thomas Lam from \cite{La14a}  that partially ordered sets
known as uncrossing posets have dual posets that 
are lexicographically  shellable.  This implies that the uncrossing posets themselves are also 
shellable.     This conjecture of Lam is proven  in Theorem ~\ref{EL-shell-proof}.
Specifically, we prove  that these  uncrossing posets 
are  dual EC-shellable (see Definition ~\ref{ec-def}). 
 Combining this  with 
Lam's result in \cite{La14a} that these posets are Eulerian (see Definition ~\ref{Eulerian-def}), 
we conclude that
these are CW posets (see Definition ~\ref{CW-def}), namely  that they 
are face posets of regular CW complexes.  Moreover, general
properties of lexicographic shellings allow us
also to conclude that each closed
interval in an  uncrossing poset  is also  a CW poset.  

These uncrossing  posets, denoted $P_n$ for $n\ge 2$,  naturally arise 
as face posets of  stratified spaces of 
planar electrical networks given by planar 
graphs  (as discussed for instance in \cite{Ke} and \cite{La14b})  for planar graphs 
that are ``well-connected'' (a notion defined for instance in  \cite{CIM}) with $n$ boundary
nodes.
Our result 
that these posets  are shellable is suggestive that these stratified spaces
may be well behaved topologically, and in particular may 
be regular CW complexes with each 
cell closure  homeomorphic to a closed ball; indeed this property of being a regular CW complex has just been announced in a new preprint 
of Galashin, Karp and Lam, namely  \cite{GKL19}, a development that came subsequent to our present
paper.     A proof that the closure of  the  big cell   is homeomorphic
to a closed ball was announced and outlined earlier  in \cite{GKL}.  Further related results
%The details of this proof as well as further related results generalizing  
%beyond the big cell  (and to other related  stratified spaces)  
appear in \cite{GKL18}. % and \cite{GKL19}.  Our
Our shellability result  for uncrossing posets serves  as a combinatorial first step towards the question of understanding  the 
homeomorphism type of all cell closures for all planar electrical networks,  whether or not the
networks  are  well-connected, and our result   expands the combinatorial understanding of the cell structure.
%; our results  also give new insight into the  fibers and other structure for 
%the  map sending each planar electrical network to its so-called response matrix (see \cite{Ke}).  

Another consequence of our shelling for the uncrossing posets is a shelling for each interval in the face poset for the edge product space of phylogenetic trees, namely in  the Tuffley poset (see Definition ~\ref{Tuffley-def}).  % considered for instance in \cite{GLMS}.   
We give this  shelling for each interval of the Tuffley poset  in  Corollary 
~\ref{Tuffley-corollary}.   The main result in \cite{GLMS} is a  proof of the existence of a shelling for each interval in the Tuffley poset, but that paper left open the question of constructing such a shelling.   In the course of giving an explicit shelling  construction, we obtain an 
independent proof of shellability for each interval.    

The shelling existence result in \cite{GLMS}  is used within \cite{GLMS}   to prove that  the CW decomposition for  the  edge product space of phylogenetic trees given in \cite{MS}  is a regular CW decomposition.  Our shelling for the uncrossing poset yields an explicit construction of a shelling for each interval in the Tuffley poset, hence  also a more explicit  proof that the CW decomposition of \cite{MS} is a regular CW decomposition.  We do this by  showing that each interval in the Tuffley poset is an interval in an uncrossing poset and  then using the fact that any shelling of an entire poset that is induced  by a dual  EC-labeling (or by a dual EL-labeling) by definition  also  induces an explicit  shelling on each interval by restricting the labeling to the interval. 
Through  this connection to Tuffley posets, our work could also shed some new light on the edge product space of phylogenetic trees (in other words for an important
compactification of the tree space studied e.g. in \cite{BHV}).

Section ~\ref{BG-section} gives background  needed  both to understand our results and their motivations and  also needed  for their proofs.  Then Section ~\ref{Lam-section} gives a high-level view of  the proof of shellability for  uncrossing posets,  with the key technical lemmas (where a lot of the hard work is done) relegated to Section ~\ref{key-lemma-section}.  The application to Tuffley posets and edge product spaces of phylogenetic trees is given  in Section ~\ref{Tuffley-section}; this section includes quite a bit of detail in an effort to make this section relatively self-contained and reasonably accessible to readers from areas such as mathematical biology (without the need to read Section  ~\ref{key-lemma-section} in order  to understand Section ~\ref{Tuffley-section}).

The authors thank the referee for helpful feedback on an earlier version of the paper.

\subsection{Description of the uncrossing posets}

Denote  the uncrossing poset on $n$ wires by  $P_n$.  
Figure 1 shows the uncrossing poset  $P_3$.   

      \begin{figure}[h]\label{poset-P_3}
        \begin{picture}(325,482)(-80,-15)
 
       \put(77,441){\line(1,0){36}}
      \put(80,462){\line(1,-1){27}}
      \put(83,434){\line(1,1){28}}

        \put(83,434){\circle*{4}}
              \put(113,441){\circle*{4}}
       \put(77,441){\circle*{4}}
       \put(108,434){\circle*{4}}
       \put(80,462){\circle*{6}}
       \put(111,462){\circle*{4}}

%\put(70,464){$1$}

 \put(70,462){$1$}
 \put(115,462){$2$}
 \put(118,439){$3$}
 \put(112,425){$1$}
 \put(72,425){$2$}
 \put(66,439){$3$}

\put(40,445){$\hat{1} = $}
       \put(95,450){\circle{100}}
       
       \curve(5,392,  80,420)
       \curve(95,392,  95,420)
       \curve(180,392,  110,420)

               \put(95,365){\circle{100}}             
      \put(80,377){\line(1,-1){27}}
\curve(77, 356, 94, 361, 111,377)
\curve(83,349, 98, 356, 113,356)

        \put(83,349){\circle*{4}}
              \put(113,356){\circle*{4}}
       \put(77,356){\circle*{4}}
       \put(108,349){\circle*{4}}
       \put(80,377){\circle*{6}}
       \put(111,377){\circle*{4}}

\put(70,379){$1$}

        \put(-5,365){\circle{100}}
\curve(-17, 349,  -15, 363, -20, 377)
\curve(8,349, 5, 363,  11, 377)
\curve(-23,356, 13, 356)

        \put(-17,349){\circle*{4}}
              \put(13,356){\circle*{4}}
       \put(-23,356){\circle*{4}}
       \put(8,349){\circle*{4}}
       \put(-20,377){\circle*{6}}
       \put(11,377){\circle*{4}}
 
 \put(-30,379){$1$}
 
        \put(195,365){\circle{100}}
      \put(183,349){\line(1,1){28}}
\curve(180,377, 197, 361,  213, 356)
\curve(177,356,  192, 357, 208, 349)

        \put(183,349){\circle*{4}}
              \put(213,356){\circle*{4}}
       \put(177,356){\circle*{4}}
       \put(208,349){\circle*{4}}
       \put(180,377){\circle*{6}}
       \put(211,377){\circle*{4}}

        \put(170,379){$1$}
        
        \curve(-15,335,  -45,250)
        \curve(-5,335,  5,250)
        \curve(5,335, 60,250)
        \curve(15,335, 115,250)

        \curve(80,335, -35, 250)
        \curve(90,335,  10, 250)
        \curve(100,335,  170, 250)
        \curve(110,335,  230, 250)
        
        \curve(175, 335, 70,250) 
        \curve(185, 335, 125, 250) 
        \curve(195, 335, 180, 250)
        \curve(205, 335, 240, 250)

\put(65,220){\circle{100}}
\curve(81,232, 78, 221, 83, 211)
\curve(50,232, 56, 218, 53,204)
\curve(47,211, 62, 213, 78,204)

        \put(53,204){\circle*{4}}
              \put(83,211){\circle*{4}}
       \put(47,211){\circle*{4}}
       \put(78,204){\circle*{4}}
       \put(50,232){\circle*{6}}
       \put(81,232){\circle*{4}}

\put(40,234){$1$}

\put(125,220){\circle{100}}
\curve(110,232, 127, 217, 143,211)
\curve(107,211, 114,211, 113,204)
\curve(141,232, 135, 218, 138,204)

        \put(113,204){\circle*{4}}
              \put(143,211){\circle*{4}}
       \put(107,211){\circle*{4}}
       \put(138,204){\circle*{4}}
       \put(110,232){\circle*{6}}
       \put(141,232){\circle*{4}}

\put(100,234){$1$}

\put(5,220){\circle{100}}

\curve(-13,211, 6, 216, 21, 232)
\curve(23,211, 17, 210, 18, 204)
\curve(-10, 232, -4, 218,  -7, 204)

        \put(-7,204){\circle*{4}}
              \put(23,211){\circle*{4}}
       \put(-13,211){\circle*{4}}
       \put(18,204){\circle*{4}}
       \put(-10,232){\circle*{6}}
       \put(21,232){\circle*{4}}

\put(-20,234){$1$}

\put(-55,220){\circle{100}}
\curve(-73,211, -67, 221, -70,232)
\curve(-67,204, -51, 214, -37,211)
\curve(-39,232, -45, 218, -42,204)

        \put(-67,204){\circle*{4}}
              \put(-37,211){\circle*{4}}
       \put(-73,211){\circle*{4}}
       \put(-42,204){\circle*{4}}
       \put(-70,232){\circle*{6}}
       \put(-39,232){\circle*{4}}

\put(-80,234){$1$}

\put(185,220){\circle{100}}
\curve(170,232, 186, 216, 203,211)
\curve(173,204, 185, 207, 198,204)
\curve(201,232, 184, 216, 167,211)

        \put(173,204){\circle*{4}}
              \put(203,211){\circle*{4}}
       \put(167,211){\circle*{4}}
       \put(198,204){\circle*{4}}
       \put(170,232){\circle*{6}}
       \put(201,232){\circle*{4}}

\put(160,234){$1$}

\put(245,220){\circle{100}}
 \curve(230,232, 245, 228, 261,232)
\curve(227,211, 242, 211,  258,204)
\curve(233,204,  248, 211, 263,211)

        \put(233,204){\circle*{4}}
              \put(263,211){\circle*{4}}
       \put(227,211){\circle*{4}}
       \put(258,204){\circle*{4}}
       \put(230,232){\circle*{6}}
       \put(261,232){\circle*{4}}

\put(220,234){$1$}

\curve(-55, 190,  -35, 105)
\curve(-45, 190,  25, 105)

\curve(0, 190, -25, 105)
\curve(10, 190, 90, 105)

\curve(60, 190, 35, 105)
\curve(70, 190, 145, 105)

\curve(120, 190, 100, 105)
\curve(130, 190, 155, 105)

\curve(180, 190, 45, 105)
\curve(190, 190, 210, 105)

\curve(240, 190, 115, 105)
\curve(250, 190, 220, 105)

\put(95,75){\circle{100}}
\curve(80,87, 95, 83, 111,87)
\curve(77,66, 83,66,  83,59)
\curve(108,59, 108,66, 113,66)

        \put(83,59){\circle*{4}}
              \put(113,66){\circle*{4}}
       \put(77,66){\circle*{4}}
       \put(108,59){\circle*{4}}
       \put(80,87){\circle*{6}}
       \put(111,87){\circle*{4}}

\put(70,89){$1$}

\put(155,75){\circle{100}}
\curve(140,87, 168,59)
\curve(171,87, 168, 76, 173,66)
\curve(143,59, 143,66, 137,66)

        \put(143,59){\circle*{4}}
              \put(173,66){\circle*{4}}
       \put(137,66){\circle*{4}}
       \put(168,59){\circle*{4}}
       \put(140,87){\circle*{6}}
       \put(171,87){\circle*{4}}

\put(130,89){$1$}

\put(35,75){\circle{100}}
\curve(20,87, 23, 76, 17,66)
\curve(23,59, 35, 63, 48,59)
\curve(53,66, 48, 76,  51,87)

        \put(23,59){\circle*{4}}
              \put(53,66){\circle*{4}}
       \put(17,66){\circle*{4}}
       \put(48,59){\circle*{4}}
       \put(20,87){\circle*{6}}
       \put(51,87){\circle*{4}}

\put(10,89){$1$}

\put(-25,75){\circle{100}}
\curve(-43,66,  -37, 76, -40,87)
\curve(-37,59, -9,87)
\curve(-7,66, -13,66, -12,59)

        \put(-37,59){\circle*{4}}
              \put(-7,66){\circle*{4}}
       \put(-43,66){\circle*{4}}
       \put(-12,59){\circle*{4}}
       \put(-40,87){\circle*{6}}
       \put(-9,87){\circle*{4}}

\put(-50,89){$1$}

\put(215,75){\circle{100}}
\curve(200,87, 215, 83,  231,87)
\curve(203,59, 215, 61, 228,59)
\curve(233,66, 215, 66, 197,66)

        \put(203,59){\circle*{4}}
              \put(233,66){\circle*{4}}
       \put(197,66){\circle*{4}}
       \put(228,59){\circle*{4}}
       \put(200,87){\circle*{6}}
       \put(231,87){\circle*{4}}

\put(190,89){$1$}

\curve(75, -5, -20, 47)
\curve(85, -5, 35, 47)
\curve(95,-5, 95, 47)
\curve(105,-5, 150, 47)
\curve(115, -5, 200, 47)

\put(92,-25){$\hat{0}$}
          
        \end{picture}
        \caption{The Hasse diagram for $P_3$}   
\end{figure}

Let us first describe the elements of $P_n$.
We place $2n$ nodes around the boundary of a disk, 
then connect these nodes  in pairs using $n$ wires to do so.  
In addition 
to $P_n$ including all such wire diagrams with $n$ wires, 
we adjoin an element $\hat{0}$.
The elements of $P_n \setminus \{ \hat{0} \} $ 
are in natural bijection with  those permutations of $2n$ letters which are fixed point free involutions.  To see this, begin  by labeling the wire endpoints 
(proceeding clockwise around the boundary of the disk from a chosen  basepoint) with the integers $1,2,\dots ,2n$ assigned in ascending order;  the  
fixed point free involution associated to a wire diagram $D\in P_n$ consists of exactly the product of 
 2-cycles $(i,j)$ where $i$ and $j$ are the endpoints of a wire in $D$. 

Now let us define the order relation.  
 There is a unique maximal element $\hat{1}$ in $P_n$ given by a wire diagram $D$ 
  in which all  $n$ strands cross 
 each other.   See  the leftmost diagram in 
 Figure 2 for the case with $n=3$.  This  naturally corresponds to
  the fixed point free involution which exchanges $i$ with $n+i$ for each $i\in [1,n]$.  
 We may proceed   from an element $v$ to an element $u$  with $u < v$ 
  by taking a pair of  wires  that cross and locally uncrossing  the pair of wires in either of the two 
  possible ways.  This gives a cover relation $u\prec v$ if and only if this
  downward step decreases by exactly one the 
   total number of pairs of wires that  cross each other in the drawing
   for $u$ that minimizes this
   crossing number.  In other words, we  have 
 $u\prec v$   if and only if the uncrossing of a pair of wires in $v$  to obtain $u$ 
 does not introduce any double crossings of pairs of wires in $u$.  
 See the rightmost diagram in Figure 2  for an element 
 obtained by uncrossing a pair of wires in the fully crossed diagram $\hat{1}$, indeed yielding a 
 cover relation for $n=3$.  Notice that uncrossing this same pair of wires in the other direction would
 introduce a double crossing.

 \begin{figure}[h]\label{examples}
        \begin{picture}(325,50)(-80,75)
 
        \put(200,100){\circle{100}}
        
        \put(188,84){\circle*{4}}
              \put(218,91){\circle*{4}}
       \put(182,91){\circle*{4}}
       \put(211,84){\circle*{4}}
       \put(188,116){\circle*{4}}
       \put(212,116){\circle*{4}}

       \put(181,91){\line(1,0){36}}
      \put(211,84){\line(0,1){32}}
      \put(188,84){\line(0,1){32}}

\put(30,97){which covers wire diagram  }

\put(-45,97){$\hat{1} = $}
       \put(0,100){\circle{100}}
       
       \put(-18,91){\line(1,0){36}}
      \put(-15,112){\line(1,-1){27}}
      \put(-12,84){\line(1,1){28}}

        \put(-12,84){\circle*{4}}
              \put(18,91){\circle*{4}}
       \put(-18,91){\circle*{4}}
       \put(13,84){\circle*{4}}
       \put(-15,112){\circle*{4}}
       \put(16,112){\circle*{4}}

        \end{picture}
        \caption{Two wire diagrams with $n=3$ wires}
  \end{figure}

 This process of proceeding down cover relations naturally terminates at many different minimal elements given by the various wire  diagrams with no crossings.  
     A unique minimal element $\hat{0}$ is artificially adjoined to the poset, rendering the wire 
     diagrams  without any crossings  as the atoms of the resulting poset $P_n$.  
     See Lemma 
 ~\ref{noncross-down} for   a precise  combinatorial description for the cover relations 
 in $P_n$. 
     
     \begin{remark}
     The number of atoms in $P_n$ is the $n$-th Catalan number, namely is $\frac{1}{2n+1}{2n+1
     \choose n}$.
     \end{remark}
     
We observe in Corollary ~\ref{size-of-uncrossing-poset} that the number of elements in $P_n$  is 
$$1 + \frac{(2n)!}{n!2^n}.$$

\begin{remark}\label{rank-def}
{\rm
The uncrossing poset $P_n$ is graded by letting the rank of any  $D\ne \hat{0} $
be one more than the number of pairs of wires that cross each other in $D$, 
with the rank of $\hat{0}$ being 0.
}
\end{remark}

Our first main result, conjectured  in \cite{La14a} and proven as Theorem ~\ref{EL-shell-proof}
and Corollary ~\ref{CW-proof}, is as follows.

\begin{theorem}\label{shellab-and-cw}
The uncrossing poset $P_n$ is  EC-shellable for each $n\ge 2$.  Moreover, it is a CW poset.
\end{theorem}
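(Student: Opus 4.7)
The plan is to construct an explicit EC-labeling of the dual poset $P_n^*$, verify its defining property that each closed interval contains a unique rising chain which is lexicographically smallest, and then combine this with Lam's result that $P_n$ is Eulerian to deduce the CW poset conclusion via Bj\"orner's criterion for recognizing face posets of regular CW complexes.

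The first step is to use the combinatorial description of cover relations in Lemma~\ref{noncross-down} to split the covers of $P_n$ into two types: the artificial atoms $\hat{0} \prec a$ for each non-crossing wire diagram $a$, and uncrossings $v \succ u$ between wire diagrams that differ by removing a single crossing without introducing a double crossing. For each cover of the second type I would define a label built from the unordered pair of boundary endpoints where the two uncrossed wires meet (together with a tag indicating which of the two allowed local uncrossings is performed), and totally order the labels lexicographically. Crucially, the choice should be allowed to depend on the saturated chain already traversed from the top of the interval; this chain-dependence is precisely the feature that distinguishes EC from EL labelings and will be needed to make the local labeling on each interval agree with a reflection order on that interval.

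Next, for each closed interval $[u,v]$ with $u \neq \hat{0}$, I would exploit the isomorphism referenced in the abstract between such intervals and intervals in type A Bruhat order. Under this isomorphism each uncrossing corresponds to multiplication by a reflection in $S_{2n}$, so the proposed labels translate into reflection labels on the Bruhat interval. I would then arrange the chain-labeling so that on every interval it specializes to a reflection order in the sense of Dyer, thereby inheriting from Dyer's EL-shelling the existence of a unique rising maximal chain in the interval and its lexicographic minimality. For intervals of the form $[\hat{0}, v]$, the atom covers $\hat{0} \prec a$ must be labeled separately, and I would choose these labels so that a unique non-crossing atom lies below a unique rising chain up to $v$. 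Shellability together with the Eulerian property of $P_n$ then gives that $P_n$, and in fact every closed interval in $P_n$, is a CW poset.

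The main obstacle will be the third step: making the interval isomorphism with type A Bruhat order sufficiently explicit to identify which reflection corresponds to which uncrossing, and then choosing a single chain-dependent labeling rule on $P_n^*$ whose restriction to every closed interval simultaneously yields a reflection order in Dyer's sense. No single labeling of edges alone can accomplish this on every interval, which is why the EC (chain) refinement of EL is necessary here, and producing a coherent such chain-labeling is the heart of the argument. A secondary difficulty is the consistent treatment of the covers above $\hat{0}$, which lie outside the Bruhat framework and must be labeled so that their labels mesh with the reflection labels used on the remainder of the poset.
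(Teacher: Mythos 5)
Your proposal contains two substantial gaps that would derail the argument if carried out as written.

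First, you have conflated EC-labelings with CL-labelings. An EC-labeling, as used in this paper (Definition~\ref{ec-def}), is an ordinary \emph{edge} labeling $\lambda$ of cover relations; what is relaxed is the notion of ascent and descent (a step $u\prec v\prec w$ is a \emph{topological ascent} if its label pair is lexicographically smallest among all length-two chains from $u$ to $w$, regardless of whether $\lambda(u,v)\le\lambda(v,w)$). There is no chain-dependence in the labels themselves, and the paper's labeling $\lambda$ (Definition~\ref{edge-label-def}) is indeed a single edge labeling: each cover $D\prec D'$ in $P_n^*$ receives an ordered pair of wire names determined only by the two elements, plus a special label $L$ on covers $D\prec\hat{1}$. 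Basing your plan on a chain-dependent labeling means you would be proving a different statement than the one conjectured, and you would lose the ``single coherent labeling'' property the paper needs to transfer the shelling to all intervals (and hence to the Tuffley poset).

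Second, and more seriously, you assume that every interval $[u,v]$ with $u\neq\hat{0}$ is isomorphic to a type $A$ Bruhat interval, and propose to derive the unique-rising-chain property entirely by pulling back Dyer's EL-shelling. This is not available. The paper only establishes the Bruhat isomorphism for intervals that preserve the \emph{start set} $S(D)$ (Theorem~\ref{Bruhat-map}, using Proposition~\ref{start-set-order} and Corollary~\ref{start-set-cor}), and the introduction explicitly states that whether arbitrary intervals are Bruhat intervals is open. The labels $(k,m)$ with $k<m$ behave like reflections in Dyer's sense on start-set-preserving intervals (Lemma~\ref{Bruhat-isom}), but the labels $(m,k)$ with $m>k$ (which encode the second type of uncrossing and necessarily change the start set) and the label $L$ lie outside the Bruhat picture. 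The genuine work of the paper's proof is precisely in these non-Bruhat parts: Lemma~\ref{descent-is-topol-descent} analyzes all descents by reduction to the case of three wires, Lemma~\ref{semi-mod} shows a unique topologically ascending chain exists across a start-set change by tracking how positions in $w(D)$ evolve, and Lemmas~\ref{0-exists-first}, \ref{0-unique}, \ref{0-big-small} handle the covers into $\hat{1}\in P_n^*$ (equivalently, the atoms of $P_n$) by a greedy construction and a global argument using the label $L$. Your plan acknowledges a ``secondary difficulty'' at $\hat{0}$, but even away from $\hat{0}$ the Bruhat-interval reduction simply does not cover the intervals you need, so the main body of your argument would not close.

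The final deduction, combining shellability with Lam's Eulerian theorem and Bj\"orner's criterion (via Theorem~\ref{Danaraj-Klee}) to conclude the CW poset property, is correct and matches the paper's Corollary~\ref{CW-proof}.
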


The proof  exploits  a close relationship between $P_n$
and Bruhat order.  In particular, the proof  uses 
 Dyer's notion of 
reflection order from \cite{Dy93}  to guide the choice of edge labeling.    A large class of 
intervals, namely those preserving what  we call the start set of a wire diagram,  
are proven to be dual isomorphic to type A Bruhat intervals.  Our labeling coincides 
on these  intervals  with a  known 
Bruhat order reflection order  EL-labeling.  It might be tempting to think every 
interval should be isomorphic to a type A Bruhat order interval, but whether this is true  
seems to be a rather subtle 
question that remains open.

Another conceptual aspect  of the  proof
is  the establishment of an analogue  to 
the notion of the inversion pairs of a permutation (see Definition ~\ref{noncross-def} and Lemma ~\ref{noncross-down}), what 
we call the  noncrossing pairs of a wire diagram.  
%A key step is to prove in Lemma 
%~\ref{noncross-down}  an analogue to the following property of permutations: 
%there is a cover relation upward in Bruhat order  from a 
%permutation $\pi $ by applying the reflection $(i,k)$   swapping  $i$ and $k$ if and 
%only if both of the following conditions are met. 
%\begin{enumerate}
%\item
%The pair  $(i,k)$ is not an inversion pair in $\pi$, namely $i$ and $k$ do not satisfy both of the conditions  $i<k$ and 
%$\pi (i) > \pi (k)$.  
 %\item  
%For each $j$ satisfying $i<j<k$ or $k < j < i$ either $\pi (j)$  is larger than both
%$\pi (i)$ and $\pi (k)$, or  else  $\pi (j)$ is smaller  than both $\pi (i)$ and $\pi (k)$.
%\end{enumerate}
%
%Notice that the second condition above  means that 
%swapping $i$ with $k$ in $\pi $ preserves the fact that for each intermediate value $j$, $j$ will pair with 
%exactly one of the two letters $i, k$ to form an inversion pair; applying $(i,k)$ 
%will switch which of the two letters $i,k$ comprises an inversion pair with $j$.   
%Under the above two  conditions, the  only change to the 
%cardinality of the set of inversion pairs comes from $(i,k)$ itself 
%becoming  an inversion pair.  
%
%Lemma ~\ref{noncross-down}  will show how 
%the uncrossing of a pair of  wires (called strands in \cite{Ke}) in an element of $P_n$ 
%without creating any  double crossings 
%is  governed by completely 
%analogous properties to the above condition on permutations, 
%therefore 
This allows us to construct  and justify the validity of 
a shelling based on a variant of a  type A reflection order.

 Our second main result, appearing as Corollary ~\ref{Tuffley-corollary}, is our explicit shelling for each interval in the Tuffley poset.

\section{Background}\label{BG-section}

We review background on partially ordered sets, shellability, CW complexes and CW posets,  reflection orders,  Dyer's EL-shelling for Bruhat order, 
and finally the affine symmetric group.  This is 
done in preparation for the proof  of our main result in the following section.

\subsection{Partially ordered sets}

Denote by $u\prec v$ a {\bf cover relation} in a  partially ordered set (poset)  $P$, namely an order relation $u<v$ for a  pair of elements of $u,v\in P$ 
such that there does not exist $z\in P$ such that $u< z < v$.   We then say $v$ {\bf covers} $u$.  The 
{\bf Hasse diagram} of a poset $P$ is the  graph whose vertices are the elements of $P$ and whose edges are the cover relations $u\prec v$, typically drawn in the plane with each such edge proceeding upward from $u$ to $v$.  

If a poset has a unique minimal element,  denote this as $\hat{0}$.  Likewise, if a poset has a unique maximal element,  denote it as $\hat{1}$.  
An {\bf atom} is an element that covers 
$\hat{0}$ while a {\bf coatom} is an element covered by $\hat{1}$.  A {\bf chain} is a series
$u_1 < u_2 < \cdots < u_k$ of comparable poset elements.  A {\bf saturated chain from $u$ to $v$}  is a chain $u\prec u_1 \prec \cdots \prec u_k \prec v$ comprised of cover relations.  

A {\bf closed interval} $[u,v]$ is the subposet $\{ z\in P ~|~ u\le z \le v\}$.  An {\bf open interval} 
$(u,v)$ is the subposet $\{ z\in P ~|~ u < z < v \} $.
Any poset $P$ has a {\bf dual poset}, denoted $P^*$, with the same elements as $P$ and with $u\le v$ in $P^*$ if and only if $v\le u$ in $P$.

A poset is {\bf graded} if $u<v$ implies all maximal chains from $u$ to $v$ have the same number of cover relations, called the {\bf rank} of $[u,v]$.    If a graded poset has $\hat{0}$, then the {\bf rank} of each element $v$ is defined to be one more than the rank of each element $u$  covered by $v$, letting $\hat{0}$ have rank 0. 

The {\bf order complex} of a poset $P$ is the abstract simplicial complex, denoted $\Delta (P)$,  whose $i$-dimensional faces are the chains $u_0 < u_1 < \cdots < u_i $ of $i+1$ comparable poset elements.
Denote by $\Delta_P(u,v)$ the order complex of the open interval $(u,v)$ in $P$.  Notice that the saturated chains from $u$ to $v$ will be in natural bijection with the {\bf facets} (namely the maximal faces) of $\Delta_P(u,v)$, a fact that will be important to upcoming ``lexicographic shellings''.

The {\bf M\"obius function} $\mu_P$ of a poset $P$ is defined recursively by $\mu_P(u,u) = 1$ for each $u\in P$ and 
$$\mu_P(u,v) = -\sum_{u\le z < v} \mu_P(u,z)$$ for $u\ne v$.  
The  M\"obius function $\mu_P(u,v)$  is well-known to equal  the reduced Euler characteristic $\tilde{\chi }(\Delta_P(u,v)) $ (see \cite{Ro}).  In particular, if $\Delta_P(u,v) $ is homeomorphic to a $d$-sphere, this implies $\mu_P(u,v) = (-1)^d$.

\begin{definition}\label{Eulerian-def}
A graded poset is {\bf Eulerian } if  each $u<v$ has  $\mu_P(u,v) = (-1)^{r(u,v)}$ where $r(u,v)$
is the rank of $[u,v]$.
  A graded poset is {\bf thin} if each closed interval $[u,v]$ of rank 
2 has exactly 4 elements.  
\end{definition}

\begin{remark}\label{eulerian-then-thin}
If a graded poset is Eulerian, in particular it is thin.
\end{remark}

\subsection{Shellability}

Call the maximal faces of a simplicial complex the {\bf facets} of it.  
Define the link of a face $F$ in a simplicial complex $\Delta $, denoted
$lk_{\Delta } F$ to be the subcomplex 
 ${\rm lk}_\Delta F = \{ G \in \Delta | G\cap F = \emptyset \hspace{.1in} {\rm and } \hspace{.1in} F\cup G \in \Delta \} $.
 Define the {\bf combinatorial  closure} of a face $F$ in an abstract  simplicial complex $\Delta $, 
 denoted $\overline{F} $, to be the set of faces $G\in \Delta $ such that $G\subseteq F$.
 
A simplicial complex is {\bf pure of dimension $d$} if each facet is $d$-dimensional.  
A simplicial complex is {\bf shellable} if there is a total order $F_1,\dots ,F_k$ on its facets, called a 
{\bf shelling}, 
 such that for each $j\ge 2$ the subcomplex $\overline{F_j}\cap (\cup_{i<j} \overline{F_i} )$ is pure of dimension one less than the dimension of $F_j$.    
 
 Shellability of $\Delta $  is well known to imply homotopy equivalence to a wedge of spheres in a manner that is convenient for counting the spheres of each dimension, hence calculating reduced
 Euler characteristic.  
 A shelling for $\Delta $ also induces a  shelling  for the link of each face $F$ in 
 $\Delta $.   Since each open interval $(u,v)$ of a finite poset arises as the link of a face in its order
 complex,  shellability  
  is a useful tool for determining poset M\"obius function via its interpretation as 
    reduced Euler characteristic.

Now we turn to poset edge labelings $\lambda $, namely labelings of the cover relations 
$u\prec v $ with labels $\lambda (u,v)$ from an ordered label set.

\begin{definition}
Refer to  $u\prec v\prec w$ with $\lambda (u,v) \le \lambda (v,w)$ a  {\bf weak ascent}.  Call 
$u\prec v' \prec w$ with $\lambda (u,v') > \lambda (v',w)$ a {\bf descent}.
These terms are used 
both for the  saturated chains from $u$ to $w$  themselves and for the associated ordered pairs of labels. 
\end{definition}

\begin{definition}
A saturated chain $x \prec x_1\prec x_2\prec \cdots \prec x_k \prec y$ with 
$\lambda (x,x_1)\le \lambda (x_1,x_2) \le \cdots \le \lambda (x_k,y)$ is called a {\bf weakly 
ascending chain from $x$ to $y$}.  If instead we have 
$\lambda (x,x_1) > \lambda (x_1,x_2) > \cdots > \lambda (x_k,y)$, then this is called a 
{\bf descending chain from $x$ to $y$}.
\end{definition}

First we review the notion of EL-labeling, needed for our usage of Dyer's shelling of Bruhat 
order as an input to our shellability  proof for uncrossing orders.  Then we turn to the EC-labelings that 
will be our main tool for uncrossing orders.

\begin{definition}\label{EL-shell-def}
A labeling $\lambda $ on the cover relations of a poset $P$ with a totally ordered set
$\Lambda $  is an {\bf EL-labeling} if for each 
$u < v$ the following conditions are both met.
\begin{enumerate}
\item
There is a unique saturated chain $u\prec u_1\prec u_2\prec\cdots\prec u_k\prec v$ with weakly ascending
label sequence, namely with $\lambda (u,u_1)\le \lambda (u_1,u_2)\le \cdots \le \lambda (u_k,v)$. 
That is, 
there is a unique weakly ascending chain from $u$ to $v$ for each $u<v$.
\item
This label sequence is lexicographically smaller than the label sequence for every other saturated chain from $u$ to $v$.
\end{enumerate}
\end{definition}

For the uncrossing orders, we  will use a relaxation called EC-shelling  of the more well known 
notion of
 EL-shelling.  This idea of EC-labeling and EC-shellability  was developed  in \cite{Ko}
 (see also \cite{He03} for the convenient phrasing with topological ascents/descents we will use).   The key will be first  to relax the notions of ascent and descent in a way that still captures the same topological properties as the ascents and descents of an EL-labeling   while allowing a much wider array of possible labelings.

\begin{definition}\label{ec-def}
Given an edge labeling $\lambda $ of the cover relations in a graded poset, we say $u\prec v\prec w$ is a {\bf topological ascent} if the ordered pair $(\lambda (u,v), \lambda (v,w))$ of labels  is lexicographically smaller than all of the other label sequences 
for other saturated chains $u\prec v'  \prec w$  from $u$ to $w$.  
As a word of caution, notice that it might not be the case that $\lambda (u,v)\le \lambda (v,w)$. 
We say that $u\prec v \prec w$  is a {\bf topological descent} 
otherwise.   

An edge labeling is an {\bf $EC$-labeling}  
if each $u<w$ has a unique saturated chain from $u$ to $w$ comprised entirely of topological ascents. 
Note that this chain is in particular
the lexicographically smallest saturated chain from $u$ to $v$.
A poset with such a labeling is said to be {\bf $EC$-shellable}.
\end{definition}

The  saturated chains may be ordered lexicographically, and for 
the same reasons that EL-labelings induce shellings, the facet orderings
 induced by EC-labelings  will be shelling orders.  The topological descents will function in the shelling analogously to how descents function in an EL-shelling, and the topological ascents will function in the shelling just as ascents do in an EL-shelling: the topological descents $u\prec v \prec w$ in a saturated chain will index the vertices $v$ which may be omitted from the facet  corresponding to the saturated chain  to obtain  the  codimension one faces in the closure of the facet  that are shared with (closures of) earlier facets.  
 
 \begin{remark}
   Since  $\Delta (P) = \Delta (P^*)$, 
      it  suffices to construct an EL-labeling (or EC-labeling)   
   for $P^*$ 
   to deduce shellability for $\Delta (P)$. 
\end{remark}

\subsection{Face posets of regular CW complexes}

An {\bf open $m$-cell} is a  topological space  homeomorphic to the interior of 
an  $m$-dimensional ball $B^m$. Denote the {\bf closure} of a cell $\alpha $
by $\overline{\alpha }$.  

Given a topological space $X$  and a collection of disjoint open cells whose union is $X$, 
a {\bf characteristic map}  for the open $m$-dimensional cell $e_{\alpha }$ in $X$ 
is a continuous function 
$f_{\alpha } :B^m \rightarrow X$  % to a topological space $X$ % arising as a union of open cells
mapping %where $f_{\alpha }$ %, called a {\bf characteristic map}, 
%maps
 the interior of $B^m$ homeomorphically onto %  an $m$-dimensional open cell 
$e_{\alpha }$  and mapping the boundary of $B^m$ into a finite union of 
open cells, each of dimension less than $m$.

\begin{definition}\label{CW-def}  
{\rm 
A {\bf CW complex}  is a space $X$ and a collection of disjoint open cells $e_{\alpha }$
whose union is $X$ such that:
\begin{enumerate}
\item
$X$ is Hausdorff.
\item
For each open $m$-cell $e_{\alpha }$ of the collection, there exists a  characteristic map 
$f_{\alpha } :B^m \rightarrow X$ for $e_{\alpha }$. %, called a {\bf characteristic map}, 
%that maps the interior of $B^m$ homeomorphically onto
%$e_{\alpha }$  in $X$ and   maps  the boundary of $B^m$ into a finite union of 
%open cells, each of dimension less than $m$.  
\item
A set $A$ is closed in $X$ if $A \cap \overline{e}_{\alpha }$ is closed in $\overline{e}_{\alpha }$
for each $\alpha $.
\end{enumerate} }
\end{definition}

A {\bf CW decomposition} of a topological space $X$ is a decomposition of $X$ into a disjoint union of open cells 
admitting  the structure of a CW complex in a way that coincides  with the given decomposition  into open cells and with 
the closure  operation on $X$ given by our choice of topology on $X$.  

\begin{definition}{\rm 
A  CW complex $K$ is a  {\bf regular CW complex} 
if  for each $m$ 
 there exists a  characteristic map $ f_{\alpha } : B^m \rightarrow  K$  % (as defined within Definition ~\ref{CW-def})
for each  open $m$-cell % of its $m$-cells 
$e_\alpha $ in $K$ with the further property that % for each $m$ 
%such that 
  $f_{\alpha }$ restricts to a homeomorphism
from the boundary of $B^m$ onto a finite union of  open cells of strictly lower dimension than $m$. 
}
\end{definition}

For $K$ a regular CW complex, let $sd(K)$ denote the first barycentric subdivision of $K$, 
using the fact that
each cell closure in a regular CW complex is homeomorphic to a round ball to make sense of 
the notion of barycenter in this  level of generality and thereby to define $sd(K)$.
Notice for $K$ regular that $\Delta (F(K)\setminus \{ \hat{0} \} ) = sd(K) \cong K$.

\begin{definition}
The {\bf face poset} or {\bf closure poset}  of a CW complex $K$  is 
the  partial order $\le $ on the cells of  $K$ 
with $u\le v$ if and only if $u $ is contained in the closure of $v$.  This poset is denoted 
$F(K)$. 
\end{definition}

See \cite{Bj84} for the introduction of the next notion and the next  theorem.

\begin{definition}\label{CW-def}
A finite, graded poset $P$ is called a 
{\bf CW poset} if 
\begin{enumerate}
\item
$\hat{0} \in P$
\item
$P\setminus \hat{0} \ne \emptyset $
\item
$\Delta_P (\hat{0},u) $ is homeomorphic to a sphere of dimension $rk(u)-2$ for each $u\ne \hat{0}$
\end{enumerate}
\end{definition}

\begin{theorem}[Bj\"orner, \cite{Bj84}]
A finite poset
$P$ is a CW poset if and only if there exists a regular CW complex having $P$ as its face poset with 
$\hat{0}\in P$ representing the empty cell.
\end{theorem}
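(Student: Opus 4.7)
The plan is to prove the two directions separately. For the easier direction ($\Rightarrow$), suppose $K$ is a regular CW complex with $P = F(K)$ and $\hat{0}$ representing the empty cell. Axioms (1) and (2) of Definition~\ref{CW-def} are immediate (assuming $K$ is nonempty). For axiom (3), fix $u \in P$ of rank $k$, corresponding to an open $(k-1)$-cell $e_u$. The open interval $(\hat{0}, u)$ consists precisely of the cells strictly contained in $\overline{e_u}$, and by regularity of $K$ these form a CW subcomplex homeomorphic to $S^{k-2}$. The order complex of this sub-face-poset is the first barycentric subdivision of that subcomplex and hence also homeomorphic to $S^{k-2}$, giving $\Delta_P(\hat{0}, u) \cong S^{rk(u)-2}$ as required.

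For the harder direction ($\Leftarrow$), I would construct $K$ inductively by rank: having built a regular CW complex $K_{<k}$ whose face poset together with $\hat{0}$ equals $P_{<k}$, I attach a new $(k-1)$-cell for each rank-$k$ element $u \in P$. The base case ($k=1$) places a vertex for each atom of $P$. For the inductive step, the subcomplex $K_u^{<} := \bigcup_{v \in (\hat{0}, u)} \overline{e_v}$ is already constructed, and by the inductive hypothesis its face poset (with $\hat{0}$ adjoined) is $[\hat{0}, u)$. Hence $sd(K_u^{<}) = \Delta_P(\hat{0}, u) \cong S^{k-2}$ by axiom (3), so $K_u^{<} \cong S^{k-2}$. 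Choosing any homeomorphism $g_u \colon S^{k-2} \to K_u^{<}$ and using it as the attaching map for a $(k-1)$-cell produces the new cell $e_u$; regularity of the attachment is immediate because $g_u$ is a homeomorphism onto a union of lower-dimensional open cells.

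The main obstacle is twofold. First, one must pass from the combinatorial sphere condition on $\Delta_P(\hat{0}, u)$ to the topological sphere condition on the subcomplex $K_u^{<}$; this relies on the identity $sd(L) \cong L$ for regular CW complexes $L$, applied inductively to $L = K_u^{<}$, together with the fact that $sd(L)$ coincides with the order complex of $F(L) \setminus \{\hat{0}\}$ when $L$ is regular. Second, one must verify that the constructed $K$ genuinely has face poset $P$: for $v < u$, the cell $e_v$ lies in $K_u^{<} \subseteq \overline{e_u}$ by induction; conversely, if $e_v \subseteq \overline{e_u}$ with $v \neq u$, then $e_v \subseteq K_u^{<}$, hence $e_v \subseteq \overline{e_w}$ for some $w \in (\hat{0}, u)$, giving $v \leq w < u$ by the inductive hypothesis. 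No topological obstruction to the attachment arises because any two homeomorphisms of spheres yield the same cell structure up to the data we need to track.
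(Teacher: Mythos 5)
The paper does not prove this statement; it is cited from Bj\"orner \cite{Bj84} and used as a black box (it enters the argument only through Theorem~\ref{Danaraj-Klee}). Your sketch is essentially Bj\"orner's original proof, and it is correct. For the direction starting from a regular CW complex $K$ with $F(K)=P$, you correctly use that regularity makes $\overline{e_u}\setminus e_u$ a subcomplex homeomorphic to $S^{\mathrm{rk}(u)-2}$ and that its barycentric subdivision is $\Delta_P(\hat{0},u)$. For the converse, the inductive cell attachment is the right mechanism, and you isolate the key step accurately: the combinatorial sphere hypothesis on $\Delta_P(\hat{0},u)$, combined with the inductively established regularity of $K_{<k}$ and the identity $sd(L)=\Delta\bigl(F(L)\setminus\{\hat{0}\}\bigr)\cong L$ for regular $L$, shows that the subcomplex $K_u^{<}$ onto which $e_u$ is to be attached is a topological $(k-2)$-sphere, so a homeomorphism onto it is a regular attaching map. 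Your verification that the resulting complex has face poset $P$ is also correct. The one vague phrase is ``any two homeomorphisms of spheres yield the same cell structure up to the data we need to track''; what is actually required, and what is true, is that the face poset depends only on which cells lie in which cell closures, not on the particular homeomorphism chosen, so the wording should be tightened but the substance is sound.
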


This combines with results from \cite{DK} to yield the following result, which is explained in  
Proposition 2.2 in \cite{Bj84}.  We have slightly rephrased this result  below  by using 
the fact 
that a shelling for a poset induces a shelling for each interval $[x,y]$ in it.  

\begin{theorem} \label{Danaraj-Klee} 
Any finite graded poset $P$ that is thin and shellable and has unique minimal element 
$\hat{0} $ as well as at least one additional element will be a CW poset.
\end{theorem}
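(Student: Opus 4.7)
The plan is to verify the three defining conditions of a CW poset (Definition~\ref{CW-def}) for $P$ and then apply Bj\"orner's theorem from the same reference. Conditions (1) and (2) are immediate from the hypotheses, so the entire task reduces to condition (3): showing that for each $u \ne \hat{0}$, the order complex $\Delta_P(\hat{0}, u)$ is homeomorphic to a sphere of dimension $rk(u) - 2$.

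First I would observe that the closed interval $[\hat{0}, u]$ inherits both thinness and shellability from $P$. Thinness passes to $[\hat{0}, u]$ because every rank-$2$ closed subinterval of $[\hat{0}, u]$ is also a rank-$2$ closed interval of $P$. Shellability passes to $[\hat{0}, u]$ by the remark in Section~2.2 that a shelling of a simplicial complex induces a shelling of the link of each of its faces, since $\Delta_P(\hat{0}, u)$ arises as such a link in $\Delta(P)$; this is the form in which shellability is inherited by closed intervals and is exactly what was noted just before the statement.

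Next I would verify that $\Delta_P(\hat{0}, u)$ is a pure $(rk(u) - 2)$-dimensional pseudomanifold (without boundary). Purity of this dimension follows directly from gradedness of $P$. For the pseudomanifold condition, I would fix an arbitrary codimension-$1$ face, which corresponds to a chain $\hat{0} = y_0 \prec y_1 \prec \cdots \prec y_{i-1} \prec y_{i+1} \prec \cdots \prec y_{rk(u)} = u$ missing exactly one interior element. The facets of $\Delta_P(\hat{0}, u)$ containing this face are in bijection with elements $z$ satisfying $y_{i-1} < z < y_{i+1}$, and thinness of the rank-$2$ interval $[y_{i-1}, y_{i+1}]$ guarantees that there are exactly two such $z$.

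Finally, I would invoke the theorem of Danaraj and Klee cited in \cite{DK} that any shellable pseudomanifold of dimension $d$ is homeomorphic to the sphere $S^d$; applying this to $\Delta_P(\hat{0}, u)$ gives the required sphere identification, and Bj\"orner's theorem then delivers the CW poset conclusion. There is no serious obstacle in the argument, since the statement is essentially a repackaging of Bj\"orner's regular CW realization theorem together with the Danaraj-Klee sphere criterion; the only substantive bookkeeping is verifying that thinness and shellability descend cleanly to each interval $[\hat{0}, u]$, as above.
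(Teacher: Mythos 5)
Your proof is correct and is essentially the standard argument lying behind the paper's citation: the paper does not prove this theorem itself but defers to Proposition~2.2 of Bj\"orner's paper \cite{Bj84}, whose proof is exactly your route — verify thinness and shellability pass to each $[\hat 0, u]$, use thinness of the rank-two subintervals to conclude that $\Delta_P(\hat 0, u)$ is a pseudomanifold without boundary, and invoke the Danaraj--Klee result that a shellable pseudomanifold of this kind is a sphere. One small remark: the final appeal to Bj\"orner's theorem is superfluous and slightly misplaced, since once conditions (1)--(3) of Definition~\ref{CW-def} are verified $P$ is a CW poset directly by definition; Bj\"orner's theorem is the further statement that CW posets are precisely face posets of regular CW complexes, which is not what you need to conclude here.
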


\subsection{Reflection order EL-labeling for Bruhat order}
 This section reviews an EL-labeling of Dyer for Bruhat order.   We will need this in a special case  as an important 
 ingredient to our upcoming EC-shelling for uncrossing orders.  
 %We point out below the special case to be used later, one having the symmetric group as our Coxeter group $W$.   
 For further background on Coxeter groups and root systems, 
 see \cite{BB} and  \cite{Hu}.
 
 \begin{definition}
The  {\bf Bruhat order} is a partial order on the elements of a Coxeter group $W$
with cover relations $u\prec v$ when $v$ is obtained from $u$ by left multiplication by 
 a reflection that increases  ``length'' exactly by one. 
 
   In the case of the symmetric group, the {\bf reflections} are the transpositions $(i,j)$ and the 
 {\bf length} of any $\pi \in S_n$ is the number of inversions, that is, the 
 cardinality of $\{ 1\le i < j \le n | \pi (i) > \pi (j) \} $.

 \end{definition}

Given a Coxeter system $(W,S)$ with simple reflections $S$, let $T$ be the set of all its reflections 
$wsw^{-1}$ for $w\in W$ and $s\in S$.  The  reflections of $(W,S)$ are in natural bijection with the positive real roots.  
By way of this bijection,  any total order on positive roots  will also induce  a total order on reflections.   

Recall from Definition 2.1 in \cite{Dy93} and remarks  shortly thereafter:

\begin{definition}\label{reflection-order-definition}
{\rm A total order  $<$ on the positive roots of a
root system is called a {\bf reflection order} if each triple of roots $\alpha , \beta , c\alpha + d\beta $ for $c,d$ positive real numbers  satisfies $\alpha < c \alpha + d\beta < \beta $ or  $\beta < c\alpha + d \beta < \alpha $.}
\end{definition}

Dyer observes in \cite{Dy93}  that the following procedure will always yield reflection orders.

\begin{definition}\label{standard-reflection-orders}
{\rm 
For $W$ a (not necessarily finite)  reflection group, any total order 
on its  simple reflections gives rise to a {\bf lexicographic reflection order} 
$<_R$  on all positive roots  as follows.   
Each positive  root 
may be  written in a unique way  
as a positive sum of simple roots, hence as a vector in the coordinates  given by the  
simple roots.  We use the given order on simple roots  
to  order the coordinates in these vectors. 
Scale each resulting vector so that  its coordinates sum to 1.
  To obtain $<_R$, order these scaled vectors lexicographically.
}
\end{definition}
 
\begin{theorem}[Dyer, Proposition 4.3 in \cite{Dy93}]\label{Dyer-shelling}
Any reflection order  induces an EL-labeling on Bruhat order by labeling each
cover relation $u\prec v$ with the reflection $vu^{-1}$.
\end{theorem}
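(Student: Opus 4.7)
The plan is to verify directly the two conditions of Definition \ref{EL-shell-def} for the labeling $\lambda(u,v)=vu^{-1}$. Two preliminary remarks simplify the bookkeeping. First, distinct cover relations out of a fixed element $u$ carry distinct reflection labels (since the label $t$ determines the upper element $ut$), so any weakly ascending saturated chain is automatically strictly ascending in the reflection order. Second, condition (ii), lex-minimality, will emerge from the same rank-two exchange analysis used to produce an ascending chain. The heart of the matter is therefore to produce a \emph{unique} strictly ascending saturated chain from $u$ to $v$ for each $u<v$ in Bruhat order.

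The first and essential step is to analyze the rank-two (dihedral) case. Every closed rank-two Bruhat interval $[u,v]$ is a diamond with exactly two atoms $a,a'$, and the four cover labels $t_1=au^{-1}$, $t_2=va^{-1}$, $t_1'=a'u^{-1}$, $t_2'=v(a')^{-1}$ are reflections all lying in a common rank-two reflection subgroup of $W$ (a standard consequence of the strong exchange condition together with the identity $t_1t_2=t_1't_2'=vu^{-1}$). Translating to roots, these four reflections correspond to positive roots in a $2$-dimensional subspace, and each positive root of the rank-two subsystem is a nonnegative linear combination of the two extremal positive roots. Applying Definition \ref{reflection-order-definition} iteratively inside this $2$-plane then forces the positive roots of the rank-two subsystem to be totally ordered in a convex fashion, with the two extremes occurring first and last. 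Reading off the resulting dihedral picture shows that one of the chains $u\prec a\prec v$, $u\prec a'\prec v$ uses the two outermost reflections in increasing order while the other uses them in decreasing order; in particular, exactly one of the two saturated chains in the diamond is ascending, and it is the lex-smaller of the two. This rank-two step is the main obstacle of the proof, since it is here that the reflection-order axiom is fully engaged.

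With the rank-two case established, the higher-rank case is handled by a standard exchange argument. Given any saturated chain from $u$ to $v$ that contains a descent at some intermediate $u_i$, replace $u_i$ by the other middle element of the rank-two diamond $[u_{i-1},u_{i+1}]$; by the previous paragraph this strictly decreases the lex order of the label sequence. Iterating, the lex-smallest saturated chain has no descent and is therefore strictly ascending, proving existence and lex-minimality simultaneously. Uniqueness of the ascending chain then follows by an induction on the rank of $[u,v]$ that uses the same rank-two exchange together with the Bruhat diamond lemma (if $u\prec u_1,u_2\le v$ with $u_1\ne u_2$, then there exists $u''\le v$ with $u_1,u_2\prec u''$): two distinct ascending chains first diverging at some $u_i$ produce, via the diamond lemma, a rank-two sub-diamond inside $[u_i,v]$ carrying two ascending chains, contradicting the rank-two step. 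Together, these verify both conditions of Definition \ref{EL-shell-def}.
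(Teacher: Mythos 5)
The paper gives no proof of Theorem~\ref{Dyer-shelling}: it simply cites Dyer's Proposition 4.3 in \cite{Dy93} and uses it as a black box (via Theorem~\ref{Bruhat-map} and Lemma~\ref{Bruhat-isom}) to import a reflection-order EL-labeling on the Bruhat intervals $[D_1,D_2]$ with $S(D_1)=S(D_2)$. So there is no proof inside the paper to compare against; I can only judge your sketch against Dyer's actual argument.

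Your outline does correctly reproduce the architecture of Dyer's proof: reduce to rank two, then use convexity of the reflection order on a rank-two root subsystem. But the dihedral reduction --- that the four cover labels of a rank-two Bruhat diamond $[u,v]$ lie in a common dihedral reflection subgroup --- is the technical heart of the argument, and you dispose of it as ``a standard consequence of the strong exchange condition together with the identity $t_1t_2=t_1't_2'=vu^{-1}$.'' (Note first the minor point that with left multiplication the identity reads $t_2t_1=t_2't_1'=vu^{-1}$.) This is not a one-line consequence of strong exchange: knowing that $vu^{-1}$ factors two ways as a product of two reflections does not by itself put $t_1'$ and $t_2'$ inside $\langle t_1,t_2\rangle$. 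Establishing this requires Dyer's theory of reflection subgroups and the reflection cocycle, and is precisely where the real work of Proposition 4.3 lives; a complete proof cannot wave it through. The uniqueness step is also compressed past the point of being correct as written: after producing $u''\le v$ with $u_{i+1},u_{i+1}'\prec u''$, you assert that $[u_i,u'']$ ``carries two ascending chains,'' but the ascents you actually know about point along the chains toward $v$, not toward $u''$; you need the rank-two dichotomy in $[u_i,u'']$ together with a comparison of the labels $\lambda(u_{i+1},u'')$ and $\lambda(u_{i+1},u_{i+2})$ (this is where the Bruhat lifting/Z-property does its work) before a contradiction emerges. In short, your plan is the right one and is faithful to Dyer's strategy, but the two steps you treat as routine are exactly the nontrivial content, so as written the argument has genuine gaps.
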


Next we describe a specific such  EL-labeling based on a particular reflection order.  This  will provide one of the important ingredients to our shelling for uncrossing posets.

\begin{corollary}
For the symmetric group $S_n$, the edge labeling $\lambda (u,v) = vu^{-1}$ induces an 
EL-labeling with respect to the following ordering on the set of labels, namely on the 
transpositions $(i,j) $ for $i<j$ in $S_n$:
$$(1,2) < (1,3) < \cdots < (1,n) < (2,3) < \cdots < (2,n) < \cdots < (n-1,n).$$
That is, for $i<j$ and $i'< j'$ we have $(i,j) < (i',j')$ if and only if we have either $i< i'$ or we have
$i=i'$ with $j<j'$.
\end{corollary}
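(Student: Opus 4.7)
The plan is to apply Dyer's Theorem \ref{Dyer-shelling} by verifying that the specified total order on transpositions in $S_n$ is a reflection order in the sense of Definition \ref{reflection-order-definition}.

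First I would identify the positive roots of the $A_{n-1}$ root system as $e_i - e_j$ for $1 \le i < j \le n$, with the transposition $(i,j)$ corresponding to $e_i - e_j$. The reflection-order condition has to be checked for every triple $\alpha,\beta,c\alpha+d\beta$ of positive roots with $c,d$ positive real numbers, so the next step is to enumerate such triples in type $A$. Writing $\alpha = e_a-e_b$ and $\beta = e_{c}-e_{d}$ with $a<b$ and $c<d$, a positive combination $c\alpha+d\beta$ is again a root only when one cancellation occurs, which forces $c=d=1$ together with either $b=c$, giving $\alpha+\beta = e_a-e_d$ with $a<b<d$, or $a=d$, giving $\alpha+\beta = e_{c}-e_{b}$ with $c<a<b$.

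Next I would verify the ordering condition separately in each of these two cases. In the first case, with $i<j<k$, the three associated transpositions are $(i,j)$, $(j,k)$, and $(i,k)$, and in the stated total order one immediately sees $(i,j) < (i,k) < (j,k)$: the first inequality uses the common leading index $i$ together with $j<k$, and the second uses leading indices $i<j$. In the second case, with $k<i<j$, the three associated transpositions are $(i,j)$, $(k,i)$, and $(k,j)$, and the same direct comparison yields $(k,i) < (k,j) < (i,j)$. In both cases the sum $\alpha+\beta$ sits strictly between $\alpha$ and $\beta$ in the stated order, which is precisely the reflection-order condition of Definition \ref{reflection-order-definition}.

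With the reflection-order property verified, the conclusion follows at once by invoking Theorem \ref{Dyer-shelling}. There is no serious obstacle here; the only subtlety worth flagging is the second type of root-sum triple (arising when the shared index is the \emph{larger} index of one root and the \emph{smaller} index of the other), but the combinatorics of the type $A$ root system keeps the enumeration transparent and the verification is purely mechanical.
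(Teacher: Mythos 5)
Your proof is correct, and it takes a slightly more direct route than the paper implicitly intends. The paper states this corollary with no proof, immediately after Theorem~\ref{Dyer-shelling} and Definition~\ref{standard-reflection-orders}, and the intended argument (made explicit in the remark following Definition~\ref{label-order}) is to recognize the stated order on transpositions as the \emph{lexicographic reflection order} associated to the simple-reflection order $s_1 < \cdots < s_{n-1}$, then invoke Dyer's observation that the procedure in Definition~\ref{standard-reflection-orders} always produces reflection orders. You instead verify the reflection-order axiom of Definition~\ref{reflection-order-definition} directly: you correctly observe that in type $A$ the only way $c\alpha + d\beta$ ($c,d>0$) can be a root is with $c=d=1$ and the roots chaining at a shared index, and you check both resulting triple configurations against the given total order. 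This is more self-contained, as it sidesteps any need to unpack the ``scale so coordinates sum to $1$, then compare lexicographically'' prescription of Definition~\ref{standard-reflection-orders}, whose exact reading requires a bit of care to match the stated order. Both approaches terminate by applying Theorem~\ref{Dyer-shelling}, so the conclusions agree; your route is the more elementary and transparent of the two.
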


We will also need the following characterization of cover relations in Bruhat order for $S_n$:

\begin{theorem}\label{Bruhat-cover-char}
There is a cover relation $\pi \prec (i,k)\cdot \pi $ for $i<k$  and for $\pi \in S_n$ 
in Bruhat order for $S_n$ 
if and only if the following conditions are both met:
\begin{enumerate}
\item
$\pi (i ) < \pi (k) $
\item
For each $j$ satisfying $i<j<k$ either $\pi (j) < \pi (i) $ or $\pi (k) < \pi (j)$.
\end{enumerate}
\end{theorem}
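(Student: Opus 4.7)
The plan is to compute directly how applying the transposition $(i,k)$ — which, judging from the conditions in the statement, acts here as the swap of positions $i$ and $k$ in one-line notation — changes the number of inversions of $\pi$, and then to invoke the standard fact that Bruhat cover relations in $S_n$ are exactly those pairs $\pi \prec \sigma$ with $\sigma = \pi t$ for some reflection $t$ and $\ell(\sigma) = \ell(\pi)+1$.

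Let $\sigma$ denote the permutation obtained from $\pi$ by interchanging the values at positions $i$ and $k$. I would partition the set of ordered pairs $(a,b)$ with $a<b$ into four groups according to how $\{a,b\}$ meets $\{i,k\}$. Pairs disjoint from $\{i,k\}$ contribute nothing. For pairs of the form $\{(a,i),(a,k)\}$ with $a<i$, or $\{(i,b),(k,b)\}$ with $b>k$, the two relevant inversion-indicator functions simply swap roles when passing from $\pi$ to $\sigma$, so the sum of the two indicators is unchanged. Hence the entire net change comes from the pair $(i,k)$ itself together with the paired contributions from $(i,j)$ and $(j,k)$ for each $j$ with $i<j<k$.

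For $(i,k)$ the change is $+1$ if $\pi(i)<\pi(k)$ and $-1$ if $\pi(i)>\pi(k)$. For each intermediate $j$, a brief check of the cases for the location of $\pi(j)$ relative to $\{\pi(i),\pi(k)\}$ shows that the net contribution is $+2$ when $\pi(j)$ lies strictly between $\pi(i)$ and $\pi(k)$ with $\pi(i)<\pi(k)$, is $-2$ in the analogous case with $\pi(i)>\pi(k)$, and is $0$ otherwise. Writing $\varepsilon=+1$ if $\pi(i)<\pi(k)$ and $-1$ otherwise, and letting $M$ denote the number of $j$ with $i<j<k$ and $\pi(j)$ strictly between $\pi(i)$ and $\pi(k)$, one obtains $\ell(\sigma)-\ell(\pi) = \varepsilon(1+2M)$. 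Thus $\ell(\sigma)=\ell(\pi)+1$ if and only if $\varepsilon=+1$ (condition (1)) and $M=0$ (condition (2)), which is exactly the claimed characterization of cover relations.

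The statement is classical, so I do not expect any serious obstacle; the only step that genuinely requires care is the cancellation within the groups of pairs having one endpoint outside the interval $[i,k]$, and this cancellation is precisely the reason positions outside $[i,k]$ do not appear in the stated conditions.
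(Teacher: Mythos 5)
Your proof is correct and takes essentially the same approach as the paper: reduce to computing $\ell(\pi(i,k))-\ell(\pi)$ by partitioning pairs of positions according to how they meet $\{i,k\}$, with the pair $(i,k)$ itself and the intermediate $j$'s providing the only net contribution. One small improvement over the paper's version: you correctly group $(a,i)$ with $(a,k)$ for $a<i$ (and $(i,b)$ with $(k,b)$ for $b>k$) and note that only the \emph{sum} of their two inversion indicators is preserved, whereas the paper's final sentence asserts, incorrectly in general, that the inversion status of each such individual pair is unchanged when passing from $\pi$ to $\tau$.
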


\begin{proof}
This is a special case of Proposition 4.6 in \cite{DH}, but we also include an elementary proof
an effort to keep our work self-contained.  This will require showing that $(i,k)\cdot \pi $ has exactly
one more inversion pair than $\pi $ does.  Notice that $(i,k)$ will be an inversion pair for 
$(i,k) \cdot \pi $ but not for $\pi $, since $\pi (i ) < \pi (k)$ whereas applying $(i,k)$ to $\pi $ to obtain 
$\tau = (i,k) \cdot \pi $ directly 
ensures we have $\tau (i) > \tau (k)$.  Also observe for each $j$ satisfying 
$i < j< k$ that $j$ forms an inversion pair with exactly one of the two letters $i,k$ in $\pi $, and it 
also forms an inversion pair with exactly one of the two letters $i,k$ in $\tau  $; specifically, the effect 
of applying $(i,k)$ to $\pi  $  is to exchange for each such $j$ whether it  
will be in an inversion pair with $i$ or with $k$.  
For every 
other pair $(i',k')$ with $i' < k'$, namely for each pair not equalling $(i,k)$ and not having $i'$ or $k'$ strictly intermediate
in value to $i$ and $k$, 
 notice that $(i',k')$  is an inversion pair for $\pi $ if and only if $(i',k')$  is an inversion pair 
for $\tau $. 
\end{proof}

\section{Proof of  Lam's  Shellability Conjecture}
\label{Lam-section}

Let us begin by  establishing  notational conventions that  will help us later 
to  assign names to wires in a wire diagram $D$ 
in an intrinsic way.  This  will be useful for giving the  poset Hasse diagram 
an edge labeling based on 
the names of the wires being uncrossed, a labeling that we 
will eventually  prove is an $EC$-labeling.

\begin{figure}[h]\label{diagram-with-labels}
        \begin{picture}(150,50)(-80,75)
 
        \put(0,100){\circle{100}}
       
       \put(-19,91){\line(1,0){36}}
      \put(-15,112){\line(1,-1){27}}
      \put(-12,84){\line(1,1){28}}

 \put(-32,113){$1_p$}
 
 \put(-15,112){\circle*{5}}
 \put(24,112){$2_p$}
 \put(25,88){$3_p$}
 
 \put(18,73){$1_v$}
 \put(-27,73){$2_v$}
 \put(-35,88){$3_v$}
          
        \end{picture}
        \caption{Wire endpoint labels for wire diagram $\hat{1} \in P_3$}     \end{figure}

Let us  choose a fixed wire endpoint in the fully crossed diagram with $n$ wires  to serve as a basepoint  which we label as $1_p$ and hold fixed as we proceed up a saturated chain progressively uncrossing more and more pairs of wires.  This fixed basepoint $1_p$
is depicted with a large dot to signify it is the basepoint.
This same fixed choice of 
basepoint $1_p$ is  used in all  the wire diagrams arising as poset elements.
Read clockwise around $D$ from starting position $1_p$.
Each time we encounter a new wire, label its endpoint we first encounter as $j_p$ where $j-1$ is the number of distinct wires previously encountered, as depicted in 
 Figure ~\ref{diagram-with-labels}.   
 When we reach the second endpoint of a wire whose first endpoint is $j_p$, label this second endpoint as  $j_v$.

\begin{definition}
  {\rm 
  Refer to the wire with   endpoints  labeled 
   $j_p$ and $j_v$  as wire $j$ or as the $j$-th wire.  

}
\end{definition}

\begin{definition}\label{word-def}
{\rm 
Define the {\bf word} of a wire diagram $D$ with $n$ strands, denoted $w(D)$, to be the word in the 
alphabet $1,2,\dots ,n$ obtained by starting at $1_p$ and reading clockwise the series of wires encountered.  Sometimes it is convenient to 
suppress the subscripts $p$ and $v$ on the letters,
 since these subscripts are redundant.   
}
\end{definition}

\begin{example}
The fully crossed diagram $D$ with 3 wires  has $w(D) = 123123$.  
The 5 fully uncrossed diagrams on 3 wires have words 
$$112233; 122331; 123321; 122133; 112332.$$  
\end{example}

\begin{remark}\label{two-types-uncrossings}
{\rm 
Recall from the definition of the uncrossing order $P_n$ that 
 a downward cover relation $D\rightarrow D'$  will  uncross a pair of wires $i,j$ such that doing so does not introduce any double-crossings.   There are two different potential ways to do this.
 Notice that for one of them,  $w(D')$ is obtained from $w(D)$ by 
 swapping the label $i_v$  with the label $j_v$ for $i<j$.  
 Uncrossing the wires in the other way will first swap the label $i_v$ with the label $j_p$ and then (if necessary) permute the names of the wires so that 
 the labels with subscript $p$ are encountered in increasing order as we proceed from left to right through $w(D')$.  In the latter case, observe that  in the event that
  wire names need to be permuted,  $w(D')$ will still 
  have a  subsequence $i,i,j,j$, although now $j$ denotes a different wire.  Note that such a step preserves the part of 
  the associated word to the left of the letter $i_v$.
  }
\end{remark}

\begin{proposition}
The word map $w$ 
gives a bijection from  the valid wire diagrams $D$  with $n$ wires to the permutations 
of alphabet $$\{ 1_p,2_p,\dots ,n_p,1_v,2_v,\dots ,n_v \} $$ with the requirements that $i_p $ appears to the left of $j_p$ for each $i<j$ and that $i_p$ appears to the left of $i_v$ for each $i$.  
\end{proposition}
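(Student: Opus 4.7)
The plan is to exhibit a two-sided inverse to $w$, after first observing that $w(D)$ always lies in the prescribed subset of permutations of the alphabet $\{1_p,\dots,n_p,1_v,\dots,n_v\}$. Constraint $(1)$ that $i_p$ precedes $j_p$ for $i<j$ is immediate from the labeling convention of Definition \ref{word-def}: the subscript $j_p$ is by definition affixed to the first endpoint of the $j$-th previously-unseen wire read clockwise from $1_p$, so $p$-subscripted letters appear in order. Constraint $(2)$ that $i_p$ precedes $i_v$ holds because $i_p$ marks the first endpoint of wire $i$ encountered and $i_v$ the second.

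Next I would construct the candidate inverse $\sigma \mapsto D(\sigma)$. Given a permutation $\sigma$ of $\{1_p,\dots,n_p,1_v,\dots,n_v\}$ satisfying both constraints, place $2n$ nodes on the boundary of the disk in clockwise order starting at the basepoint position, label the $k$-th node with the $k$-th letter of $\sigma$, and for each $i$ connect the node labeled $i_p$ to the node labeled $i_v$ by a wire. Any two distinct nodes carry distinct labels and every wire label $i$ is used exactly twice, so this uniquely specifies a wire diagram $D(\sigma)$ with $n$ wires.

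The verification that these maps are mutually inverse is now essentially bookkeeping. First, $D(w(D))=D$: the word $w(D)$ assigns a common wire label to precisely the two endpoints of each wire of $D$, so reconnecting matching labels restores the original matching. Second, $w(D(\sigma))=\sigma$: when the labeling algorithm is run on $D(\sigma)$, the first endpoint of a new wire encountered at clockwise position $k$ must receive subscript $i_p$ with $i$ one more than the number of $p$-subscripts among the first $k-1$ letters of $\sigma$, and by constraint $(1)$ this matches the letter actually placed at position $k$ by $\sigma$; constraint $(2)$ guarantees that the letter $i_v$ appears only after its partner $i_p$, so the algorithm correctly identifies it as the second endpoint of wire $i$ rather than initiating a new wire.

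There is no real obstacle in this argument; the only point requiring care is the bookkeeping in $w(D(\sigma))=\sigma$, where one must verify that constraints $(1)$ and $(2)$ are not only necessary but also sufficient for the labeling procedure to reproduce $\sigma$ verbatim. Constraints $(1)$ and $(2)$ are precisely what is needed to prevent the labeling algorithm from ``renaming'' wires when it reads $\sigma$, and this completes the bijection.
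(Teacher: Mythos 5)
Your proof is correct and takes the same approach the paper gestures at in its one-line proof (``observing that exactly these words arise and that the word map is invertible''): you verify that $w(D)$ satisfies the two constraints, construct the inverse by placing the letters of $\sigma$ clockwise and matching $i_p$ with $i_v$, and check both compositions, with constraint (1) ensuring the greedy naming scheme reproduces the $p$-letters in order and constraint (2) ensuring each $v$-letter is recognized as a second endpoint rather than a new wire. You have simply made explicit the bookkeeping the paper leaves to the reader.
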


\begin{proof}
The proof follows directly from observing that exactly these words arise and that the word map is 
invertible.
\end{proof}

\begin{corollary}\label{size-of-uncrossing-poset}
 The number
of elements in the uncrossing poset $P_n$ 
is  $$|P_n| = 1 + \frac{(2n)!}{n!2^n} = 1 + (2n-1)\cdot (2n-3)\cdot \cdots \cdot 3\cdot 1 .$$ 
\end{corollary}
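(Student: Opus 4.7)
\smallskip

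\noindent\textbf{Proof proposal.} The plan is to count $|P_n \setminus \{\hat{0}\}|$ and then add $1$ for the artificially adjoined minimum $\hat{0}$. Two essentially equivalent routes are available; I would present the one using the immediately preceding proposition, since it is already set up.

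First I would use the bijection $w$ from the preceding proposition to reduce the count to counting words of length $2n$ on the alphabet $\{1_p,\dots,n_p,1_v,\dots,n_v\}$ subject to the two constraints: (i) $i_p$ occurs to the left of $j_p$ whenever $i<j$, and (ii) $i_p$ occurs to the left of $i_v$ for each $i$. The cleanest way to enumerate these is to reconstruct each such word from the perfect matching on $\{1,\dots,2n\}$ obtained by pairing the position of $i_p$ with the position of $i_v$. Given any perfect matching $M$ on $\{1,\dots,2n\}$, the constraints force a unique labeling: in each pair $\{a,b\}$ with $a<b$, assign the $p$-endpoint to position $a$ and the $v$-endpoint to position $b$; then number the wires $1,2,\dots,n$ according to the order in which their $p$-endpoints appear. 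Conversely, any word satisfying (i) and (ii) yields a perfect matching by forgetting labels. This gives a bijection between the valid words and perfect matchings of $\{1,\dots,2n\}$.

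Next I would invoke the standard fact that the number of perfect matchings of $\{1,\dots,2n\}$ is the double factorial $(2n-1)!! = (2n-1)(2n-3)\cdots 3\cdot 1$, obtained by iteratively choosing a partner for the smallest unmatched element. Finally, the identity
\[
(2n-1)!! = \frac{(2n)!}{n!\,2^n}
\]
follows by writing $(2n)! = \bigl[(2n)(2n-2)\cdots 2\bigr]\cdot\bigl[(2n-1)(2n-3)\cdots 1\bigr]$ and noting that the first bracket equals $2^n n!$. Adding $1$ for $\hat{0}$ yields the claimed formula.

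There is really no obstacle here: the preceding proposition does all the combinatorial work, and the only remaining content is the classical enumeration of perfect matchings together with a routine algebraic identity. The only thing to be slightly careful about is confirming that the bijection between valid words and perfect matchings is well-defined both ways, which amounts to checking that the induced wire-numbering indeed satisfies condition (i) and that condition (ii) is automatic from placing $i_p$ at the smaller of the two paired positions.
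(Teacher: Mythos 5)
Your proposal is correct and is essentially the argument the paper intends: the corollary is stated without proof immediately after the word-map proposition, and the intended route is exactly the one you give (valid words $\leftrightarrow$ perfect matchings of $\{1,\dots,2n\}$, counted by the double factorial, together with the routine identity $(2n-1)!! = (2n)!/(n!\,2^n)$). The paper also already observes in Section 1.1 that the elements of $P_n\setminus\{\hat 0\}$ are in natural bijection with fixed-point-free involutions of $2n$ letters, which is an equivalent description of the same count.
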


\begin{definition}\label{start-set-def}
Let us define the {\bf start set} of $D$, denoted $S(D)$,  of  a wire diagram $D$ with $n$ wires as the $n$-subset of $\{ 1,2,\dots ,2n \} $ recording the positions in the word $w(D)$ where letters 
 with subscript $p$ appear.
\end{definition}

For example, $w(D) = 12331244$ gives rise to $S(D) = \{ 1,2,3, 7 \} $ while $w(D') = 11223344$ yields 
$S(D') = \{ 1,3,5,7 \} $.

Let us define the {\bf permutation} of $D$, denoted $\pi (D)$, as the permutation in $S_n$ obtained  by taking the restriction of $w(D) $ to the alphabet $1_v, 2_v,\dots , n_v$ and suppressing subscripts to obtain $\pi(D)$ in one-line notation.

\begin{theorem}\label{Bruhat-map}
Each interval $[D_1,D_2]$ in $P_n^*$  satisfying $S(D_1) = S(D_2)$  is isomorphic to the  type $A$ Bruhat order interval $[\pi (D_1) , \pi  (D_2)]$. %where $\pi (D) \in S_n$ is the permutation given 
\end{theorem}

\begin{proof}
Notice that the wire crossings  in a wire diagram
$D$ exactly correspond to the non-inversions in the restriction of 
$w(D)$ to the alphabet $\{ 1_v , 2_v, \dots , n_v \} $, so that the  desired
isomorphism 
is given by sending 
$D$ to this restriction of $w(D)$, namely to a permutation in one line notation.  To see that this is indeed a poset isomorphism, observe that uncrossing a pair of wires by swapping some $i_v$ with $j_v$  corresponds to applying the reflection $(i,j)$ on the left to the permutation $\pi (D)$  in one line notation given by the restriction of $w(D)$ to the letters $1_v, 2_v,\dots ,n_v$.
One may use Lemma ~\ref{noncross-down} to see that such an uncrossing step creates a double crossing if and only if the number of non-inversions decreases by more than one.  Thus, our cover relations in $P_n^*$  are
exactly the cover relations of the type  A  Bruhat order, as is immediate from the characterization of
type A Bruhat order cover relations given in Theorem ~\ref{Bruhat-cover-char}.
\end{proof}

Define the order  $\le_{lex } $ on subsets
of size $n$  of $\{ 1,\dots , 2n\} $ 
by  $\{ i_1,\dots ,i_n \} \le_{lex } \{ j_1,\dots , j_n \} $ for
$i_1 < i_2 < \dots < i_n $ and $j_1 < j_2 < \dots < j_n $ if and only if  either 
$(i_1,i_2,\dots ,i_n)$ is a 
lexicographically smaller vector than $(j_1,j_2,\dots ,j_n)$ or the two vectors are equal.

\begin{proposition}\label{start-set-order} 
If  $D_1 < D_2$ in $P_n^*$,  then $S(D_1) \le_{lex}  S(D_2)$ for $\le_{lex } $ the above  order on 
$n$-subsets of $\{ 1,2,\dots , 2n \} $.  In other words, $D_1 < D_2$ implies 
that we have 
$i_s< j_s$ for some $s$ with $i_r = j_r$ for all $r<s$.
\end{proposition}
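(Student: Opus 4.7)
The plan is to reduce to the case of a single cover relation and then analyze each of the two types of uncrossing covers described in Remark~\ref{two-types-uncrossings}. Since the relation $\le_{lex}$ is transitive, it is enough to show that whenever $D' \prec D$ in $P_n$ (i.e.\ $D \prec D'$ in $P_n^*$), one has $S(D) \le_{lex} S(D')$; the general statement then follows by iterating along any saturated chain in $P_n^*$ from $D_1$ to $D_2$.

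The main conceptual point I would emphasize is that $S(D)$ has an intrinsic description independent of wire names: viewing the wire diagram as a perfect matching of $\{1, 2, \ldots, 2n\}$, $S(D)$ is exactly the set of smaller elements of each matched pair, i.e.\ the ``clockwise-first'' endpoint of each wire. With this viewpoint the need to permute wire names to restore the convention that $p$-labels appear in increasing wire-index order becomes harmless. Now fix an uncrossing cover $D \succ D'$ involving wires $i, j$ of $D$ with $i < j$, and let $a_i < b_i$ and $a_j < b_j$ be the positions of the endpoints of these wires in $D$. Since wires $i$ and $j$ cross in $D$ and $i_p$ is read before $j_p$, the four positions must interleave as $a_i < a_j < b_i < b_j$.

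In the first type of uncrossing ($i_v \leftrightarrow j_v$ in $w(D)$), both swapped labels are closings, so the positions of the $p$-labels are unchanged and $S(D') = S(D)$. In the second type ($i_v \leftrightarrow j_p$), the underlying matching is modified so that one new wire connects $a_i$ to $a_j$ and another connects $b_i$ to $b_j$; the set of clockwise-first endpoints therefore becomes $S(D') = (S(D) \setminus \{a_j\}) \cup \{b_i\}$ with $b_i > a_j$. A direct check, writing $S(D) = \{\alpha_1 < \cdots < \alpha_n\}$ with $\alpha_k = a_j$, shows that the sorted tuples for $S(D)$ and $S(D')$ agree in positions $1, \ldots, k-1$ and differ at position $k$ with the new value strictly larger than $\alpha_k$ (it is $\min(\alpha_{k+1}, b_i)$, or $b_i$ if $k = n$), so $S(D) \le_{lex} S(D')$. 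There is no serious obstacle here; the real content is setting up the intrinsic matching description of $S$ so that the wire-relabeling bookkeeping in case~(b) of Remark~\ref{two-types-uncrossings} can be bypassed by tracking the underlying matching directly.
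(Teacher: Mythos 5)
Your proof is correct and follows the same route as the paper's: reduce to cover relations, split according to the two uncrossing types of Remark~\ref{two-types-uncrossings}, and observe that $S$ is unchanged under an $i_v \leftrightarrow j_v$ swap while in the other case the position $a_j$ is replaced by the strictly larger $b_i$. Your intrinsic description of $S(D)$ as the set of clockwise-first endpoints of the underlying matching is a clean way to dispose of the wire-renaming bookkeeping, but it formalizes exactly the observation the paper's argument relies on rather than introducing a new idea.
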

  
  \begin{proof}
  Observe that we have $S(D_1) = S(D_2)$ for  each cover relation $D_1\prec D_2$ in which a subsequence $i,j,i,j$ of 
  $w(D_1)$ is transformed to $i,j,j,i$ in $w(D_2)$, namely for each cover relation swapping
  some pair $i_v$ and $j_v$ in the associated words.  Now we turn to the other type of wire 
  uncrossing discussed in Remark ~\ref{two-types-uncrossings}.  Observe 
  that  replacing $i,j,i,j$ in $w(D_1)$ by $i,i,j,j$ in $w(D_2)$ and then permuting the names of the wires so that wire names are first encountered in ascending order will  cause $S(D_2)$ to be obtained from 
  $S(D_1)$ by increasing the value of a single element of $S(D_1)$, namely replacing the 
    position of the first copy of $j$  in $w(D_1)$  by the larger position of the second copy of $i$ in $w(D_1)$.  That is, $S(D_2)$ lacks the position of the first copy of $j$ in $w(D_1)$ but instead has 
    the position of the larger copy of $i$ in $w(D_1)$, the latter of which is necessarily larger in order for the $i$ and $j$ wires to cross each other in $D_1$.
  \end{proof}
  
  The proof of Proposition ~\ref{start-set-order} also yields:
  
  \begin{corollary}\label{start-set-cor}
  Given $D_1 < D_2$ in $P_n^*$
   with $S(D_1) = S(D_2)$,  then all saturated chains from $D_1$ to $D_2$ consist of 
   uncrossing steps  which each replace some  $i,j,i,j$  in $w(D_1)$ by $i,j,j,i$ in $w(D_2)$.
   Each $D_1 < D_2$ in $P_n^*$ with $S(D_1) <_{lex} S(D_2)$ has the property that 
              all saturated chains from $D_1$ to $D_2$ 
       must use one or more
  uncrossings of the type which moves $i_v$ in $w(D_1)$ to the position in $w(D_2)$ that is 
  occupied by $j_p$ in $w(D_1)$; in this  case, $w(D_1)$ will have a 
  subsequence $i,j,i,j$ and $w(D_2)$ will have a subsequence $i,i,j,j$.
  \end{corollary}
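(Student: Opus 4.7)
The plan is to deduce both statements essentially immediately from the proof of Proposition~\ref{start-set-order}, combined with the dichotomy of uncrossing steps described in Remark~\ref{two-types-uncrossings}. First I would fix an arbitrary saturated chain $D_1 = D^{(0)} \prec D^{(1)} \prec \cdots \prec D^{(m)} = D_2$ in $P_n^*$. Applying Proposition~\ref{start-set-order} to each pair $D^{(0)} \le D^{(r)}$ and $D^{(r)} \le D^{(m)}$ yields the sandwich $S(D_1) \le_{lex} S(D^{(r)}) \le_{lex} S(D_2)$ for every $r$.

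Next I would recall from Remark~\ref{two-types-uncrossings} that each cover relation $D^{(r)} \prec D^{(r+1)}$ in $P_n^*$ is of exactly one of two forms: either (Type~1) the step swaps $i_v$ with $j_v$, turning a local subsequence $i,j,i,j$ of the current word into $i,j,j,i$ — which, as shown in the proof of Proposition~\ref{start-set-order}, preserves $S$ — or (Type~2) the step swaps $i_v$ with $j_p$, turning $i,j,i,j$ into $i,i,j,j$ possibly followed by a renaming of wires so that $p$-labels appear in ascending order — which, as shown in that same proof, strictly increases $S$ in $\le_{lex}$ by replacing the position of the first copy of $j$ with the strictly larger position of the second copy of $i$.

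From here the two claims follow essentially by a pigeonhole on these two step-types. If $S(D_1) = S(D_2)$, the sandwich forces $S(D^{(r)}) = S(D_1)$ for every $r$, so a Type~2 cover relation is impossible on the chain, whence every cover relation is of Type~1, as asserted. If instead $S(D_1) <_{lex} S(D_2)$, then since Type~1 steps preserve the start set while $S$ must strictly increase between $D_1$ and $D_2$, at least one Type~2 step must occur along the chain, and by the description above any such step has exactly the asserted local form: it moves some $i_v$ into the position previously occupied by $j_p$ and converts a pattern $i,j,i,j$ in the word just before the step into $i,i,j,j$ in the word just after.

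The only delicate point — and the one I would handle most carefully — is the bookkeeping around the wire renaming that may accompany a Type~2 step: in an intermediate word $w(D^{(r)})$ the labels $i,j$ need not coincide with the labels of the same physical wires in $w(D_1)$. However, the corollary phrases the local description of the step in terms of the word immediately before the uncrossing, which is exactly the level at which the proof of Proposition~\ref{start-set-order} operates; so no further tracking of wire names beyond what that proof already provides is required, and the proof will be short.
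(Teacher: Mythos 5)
Your proof is correct and follows essentially the same route the paper takes: the paper presents the corollary as an immediate consequence of the proof of Proposition~\ref{start-set-order} (which shows that a Type~1 step preserves the start set while a Type~2 step strictly increases it in $\le_{lex}$), and your monotonicity-plus-pigeonhole argument is exactly the reasoning being invoked there. Your final remark about the wire renaming is a fair reading of the slightly loose ``$w(D_1)$/$w(D_2)$'' phrasing in the corollary statement, which indeed should be understood step-by-step as you describe.
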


Next we introduce for wire diagrams
a more refined  analogue  of  the  idea of 
inversion pairs of a permutation.  

\begin{definition}\label{noncross-def}
{\rm The {\bf noncrossing pair set of $D$}, denoted $N(D)$, of a wire diagram $D$ equals 
$N_1(D)\cup N_2(D)$ for  the disjoint sets $N_1(D)$ and $N_2(D)$ of ordered pairs defined as follows.  $N_1(D)$ consists of those ordered pairs $(i,j)$ for $i<j $ such that
$w(D)$ includes subexpression $i,j,j,i$.
$N_2(D)$ consists of those ordered pairs $(j,i)$ for $i<j$ such that 
 $w(D)$ instead has  subsequence $i,i,j,j$.}  
 \end{definition}

\begin{remark}
{\rm
The  $i,j,j,i$ and $i,i,j,j$ subsequence requirements for
$w(D)$ above which define  $N_1(D)$ and $N_2(D)$, respectively,  reflect   exactly 
 the two different  possible ways a pair of 
 wires $i$ and $j$ may be noncrossing.  Likewise having 
 the subsequence  $i,j,i,j$ in $w(D)$ encodes 
 combinatorially  exactly the condition that a pair of wires $i$ and $j$ cross
 each other.
 }
 \end{remark}

\subsection{Dual EC-shelling  for the uncrossing poset $P_n$}  

Let us now describe an edge labeling for $P_n^*$  whose well-definedness will follow immediately from Lemma ~\ref{noncross-down} 
 and  which we will prove is an EC-labeling in Theorem ~\ref{EL-shell-proof}  (based on a series of technical lemmas comprising 
 Section ~\ref{key-lemma-section}).
  
\begin{definition}\label{edge-label-def}
{\rm 
Label   $D \prec D'$ in $P_n^*$  as follows. 
If $w(D)$ has subsequence $k,m,k,m$, and we uncross wires $k$ and $m$ for $k<m$, 
to get $D'$ with 
$w(D')$ having subsequence $k,m,m,k$, then  let $\lambda (D,D') = (k,m)$.  (In this case, we have $(k,m)\in N_2(D')$, and  the $k$ wire ``turns right" upon approaching  the point where the wires previously crossed, to avoid crossing, assuming that this approach of the crossing is from a starting point that is the earlier of the two $k$ endpoints within $w(D)$). If   
$w(D')$ instead has subsequence $k,k,m,m$, then let  $\lambda (D,D') = (m,k)$.   (In this case, we
have $(m,k)\in N_2(D')$, and   the 
$m$ wire ``turns right"
upon approaching the previous wire crossing point, now using as the starting point of the approach the later of the two endpoints labelled $m$ in $w(D)$).  
Finally, we use a special symbol $L$ to label the remaining cover relations with a label  $L$ that is defined to 
be larger than all labels $(i,j)$ for $i<j $ and 
smaller than all labels $(r,s)$ for $r>s$;  specifically,  let $\lambda (D,\hat{1}) = L$ for each coatom $D\in P_n^*$.
} 
\end{definition}

The labels are ordered as follows, denoting by $<_{\lambda }$ this label order.

\begin{definition}\label{label-order}
{\rm 
The 
ordered pairs $(i,j)$ with $i<j$ are ordered amongst themselves 
 lexicographically, namely with the order 
 $(1,2) <_{\lambda }  (1,3) <_{\lambda }  (1,4) <_{\lambda }  \cdots <_{\lambda }  (1,n) 
<_{\lambda }  (2,3) <_{\lambda }  \cdots <_{\lambda }  (2,n) <_{\lambda }  \cdots <_{\lambda } 
 (n-1,n)$.  The ordered pairs $(r,s)$ for $r>s$ are ordered amongst themselves reverse 
linearly based on the second coordinate, breaking ties with reverse 
linear order on the first coordinate, so as
$(n,n-1) <_{\lambda }  (n,n-2) <_{\lambda }  (n-1,n-2) <_{\lambda }  (n,n-3) <_{\lambda }  (n-1,n-3) <_{\lambda }  (n-2,n-3) <_{\lambda }  \cdots <_{\lambda }  (n,1) <_{\lambda }  (n-1,1) <_{\lambda }  \cdots <_{\lambda }  (2,1)$.  
Finally,  $(i,j) <_{\lambda } L  <_{\lambda } (r,s)$ for each $i<j$ and each  $r>s$.
}
\end{definition}

\begin{remark}
{\rm 
The restriction of   $<_{\lambda }$  
to  labels  $(i,j)$ for $i<j$ coincides with the type A lexicographic
   reflection order (see Definition ~\ref{standard-reflection-orders}) 
   based on the ordering on simple roots induced by the ordering
    $s_1 < s_2 < \cdots < s_{n-1} $ on the corresponding  
    type A simple reflections.
    The label
 $L$ is  set to be 
  larger than all  these labels and smaller than all other labels.
    
  These choices will allow the transfer of some established results  from \cite{Dy93}
  related to shellability of
  Bruhat order to provide 
  useful ingredients to our proof that $\lambda $ is  an EC-labeling for $P_n^*$.
   }
\end{remark}

  Next is an analogue to a  property of inversions and Bruhat order, namely a characterization of
cover relations that will be useful later.

\begin{lemma}\label{noncross-down}
For $D$ with at least one pair of crossing wires, 
the cover relations $D'\prec D$ downward from $D$ in $P_n$ 
are given by exactly those wire 
uncrossings which get labeled via Definition ~\ref{edge-label-def}  by
 ordered pairs $(k,m)\not\in N(D)$ such that 
 the following conditions  met:
\begin{enumerate}
\item $(m,k)\not \in N(D)$
\item
If $k < m$, then  for each $l$ satisfying $k<l<m$ we have $$|\{ (k,l) , (l,m)\} \cap N(D)| = 1.$$
\item
If $k>m$, then for each  $l$ satisfying $l<m$ or $k<l$ we have $$| \{ (k,l), (l,m)\} \cap N(D)| = 1.$$
\end{enumerate}
\end{lemma}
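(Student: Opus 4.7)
The plan is to prove both directions simultaneously by computing the exact change in the crossing count of $D$ produced by the uncrossing, using the fact that the rank of a non-$\hat{0}$ element of $P_n$ is one more than its crossing count, so a cover relation $D'\prec D$ is equivalent to this change being $-1$.  The conditions $(k,m)\notin N(D)$ and $(m,k)\notin N(D)$ together rule out both the $N_1$ and $N_2$ noncrossing patterns for the pair of wires $k,m$, forcing the crossing subsequence $k,m,k,m$ (if $k<m$) or $m,k,m,k$ (if $k>m$) in $w(D)$ and so making the uncrossing well defined; this takes care of the hypothesis $(k,m)\notin N(D)$ together with condition~(1).

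The central step is a case analysis of how the label-defining swap acts on each third wire $l$.  For a type~1 uncrossing (label $(k,m)$ with $k<m$) the operation transposes $k_v$ and $m_v$, occupying positions $p_3<p_4$ in $w(D)$.  The geometric key is that the subsequence pattern of $(k,l)$ or of $(l,m)$ changes if and only if some endpoint of $l$ lies strictly in the open interval $(p_3,p_4)$.  If $l$ is intermediate, i.e.\ $k<l<m$, then wire labeling forces $l_p<p_2<p_3$, so only $l_v$ can be responsible; the configuration $l_v\in(p_3,p_4)$ is exactly the one in which both $(k,l)$ and $(l,m)$ flip from crossing to the $N_1$ noncrossing pattern, contributing an extra $-2$ beyond the $-1$ already contributed by $(k,m)$.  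Translating $l_v\in(p_3,p_4)$ into subsequence patterns identifies this bad configuration as the one in which neither $(k,l)$ nor $(l,m)$ lies in $N(D)$; moreover, both pairs lying in $N(D)$ is impossible for intermediate $l$, since the required patterns $k,l,l,k$ and $l,m,m,l$ would force the inconsistent chain $l_v<k_v<m_v<l_v$.  For outer $l$ (with $l<k$ or $l>m$), a routine sub-case enumeration on the positions of $l_p$ and $l_v$ shows that any pattern changes in $(k,l)$ and $(l,m)$ always cancel in their net effect on the crossing count, so outer wires cannot obstruct the cover condition; this establishes condition~(2).

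Type~2 uncrossings (label $(k,m)$ with $k>m$) are handled by a parallel argument in which the swap instead transposes $k_p$ at $p_2$ with $m_v$ at $p_3$, and the roles of intermediate and outer wires are interchanged.  For $l$ with $m<l<k$ the contributions from $(k,l)$ and $(l,m)$ cancel in their net effect on the crossing count, while for $l$ outside the range $[m,k]$ the problematic configuration is again the one in which wire $l$ crosses both $m$ and $k$, which translates into condition~(3).  The main obstacle is the sheer amount of position bookkeeping: for each third wire $l$ one must enumerate sub-cases based on where $l_p$ and $l_v$ sit relative to the four endpoint positions of the $k$ and $m$ wires, and verify either a $-2$ contribution in the bad case or a cancellation otherwise.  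A secondary subtlety for type~2 is that an outer $l>k$ with $l_p$ in the swap interval $(p_2,p_3)$ may trigger the wire relabeling of Remark~\ref{two-types-uncrossings}; one checks that this does not affect the characterization, since the crossing count is an invariant of the unlabeled diagram and the relabeling is only a bookkeeping step.  Once these cases and subtleties are handled, the equivalence of the cover condition with the stated conditions follows directly from the position analysis.
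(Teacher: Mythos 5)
The proposal takes essentially the same approach as the paper's own (very brief) proof: establish the equivalence of the $N(D)$-conditions with the no-double-crossing condition by a case analysis on the restriction of $w(D)$ to the six letters $k,k,l,l,m,m$. Where the paper declares this enumeration "straightforward" and leaves it to the reader, you give it a useful organizing principle: track the parity of the number of endpoints of the third wire $l$ lying in the two-position "swap interval," and isolate exactly which configurations contribute a net $-2$ to the crossing count. This framework is sound, and your treatment of the intermediate wire in the type-1 case (identifying the bad configuration as $l_v\in(p_3,p_4)$, translating it to neither $(k,l)$ nor $(l,m)$ in $N(D)$, and ruling out "count two" via the inconsistent chain $l_v<k_v<m_v<l_v$) is correct and closes the loop for condition~(2).

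One caution, however, where your plan asserts without verification that the bad outer-$l$ configuration "translates into condition~(3)." If the type-2 case is carried through in the same positional framework, one finds that for an outer wire $l>k>m$ nested entirely between $m_v$ and $k_v$ — i.e.\ with $w(D)$ restricted to $\{m,k,l\}$ reading $m,k,m,l,l,k$ — one has both $(k,l)\in N_1(D)$ and $(l,m)\in N_2(D)$, so $|\{(k,l),(l,m)\}\cap N(D)|=2$; yet the type-2 uncrossing of $m,k$ is still a cover (the two pairs involving $l$ are untouched). For a concrete instance, $w(D)=1,2,1,3,3,2$ uncrossed via the label $(2,1)$ yields $w(D')=1,1,2,3,3,2$, a cover relation in $P_3$ (crossing count $1\to 0$), even though $\{(2,3),(3,1)\}\subset N(D)$ gives count $2\ne 1$. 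Thus the obstruction is precisely $|\{(k,l),(l,m)\}\cap N(D)|=0$, and the condition should read $\ge 1$ rather than $=1$; in condition~(2) the two formulations coincide (count two is impossible for intermediate $l$, as you observe), but in condition~(3) they do not. This is an issue in the stated lemma as much as in your proposal, but your own framework, if pushed to completion, would surface it — so the claim that the bad case "translates into condition~(3)" needs to be replaced by the weaker, correct translation, and the discrepancy with the lemma's wording should be flagged.
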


\begin{proof}
The point is to observe  that the above combinatorial condition on $N(D)$ translates 
exactly  to the no-double-crossing condition for the diagram $D'$ obtained by 
performing the uncrossing of wires $k$ and $m$ in the way that is 
dictated by the label  $\lambda (D',D) = (k,m)$ given by Definition ~\ref{edge-label-def}.
That is, we use the label $(k,m)$ to dictate the nature of the uncrossing of wires and  will
show that the above condition describes when this indeed gives a cover relation.
  
The equivalence of this reformulation to the no-double-crossing condition
can be checked by a
straightforward consideration of the various cases given by the various 
 words consisting of the 
letters $k,k,l,l,m,m$ in those orders which may appear as subsequences of $w(D)$ for 
$k<m$ and then separately for $k>m$; it is important to  utilize our assumption that we have 
either $k<l<m$ or $l<m<k$ or $m<k<l$ to restrict which subsequences need to be considered.  
 In other words, we must  consider the various allowable ways 
these three wires may cross each other or avoid crossing each other under our
hypotheses.   It may help the reader 
to draw a picture and calculate the contribution of wires $k,l,m$ to $N(D)$ for the various 
allowable subsequences of $w(D)$ comprised of the multiset of letters $\{ k,k,l,l,m,m \} $. 
\end{proof}

\begin{lemma}\label{Bruhat-isom}
Given $D_1 < D_2$ with $S(D_1) = S(D_2)$, then the restriction of $\lambda $ to the interval $[D_1,D_2]$ in $P_n^*$  with label ordering  $<_{\lambda }$  is exactly the Dyer reflection order EL-labeling for  type A Bruhat order resulting from the lexicographic reflection order given by the ordering $$(1,2) < (1,3) < \dots < (1,n) < (2,3)  < \cdots < (2,n) < \cdots < (n-1, n)$$ on the type A positive roots.
\end{lemma}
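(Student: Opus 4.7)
The plan is to combine the poset isomorphism supplied by Theorem \ref{Bruhat-map} with a direct cover-by-cover comparison of the two labelings, and then verify that the restriction of $<_\lambda$ to the labels that actually appear is a reflection order so that Theorem \ref{Dyer-shelling} yields the conclusion. The hypothesis $S(D_1) = S(D_2)$ will play a double role: it identifies $[D_1,D_2]$ with a type A Bruhat interval, and it controls which branch of Definition \ref{edge-label-def} contributes labels.

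First I would apply Corollary \ref{start-set-cor}: since $S(D_1) = S(D_2)$, every cover relation $D \prec D'$ in $[D_1,D_2] \subset P_n^*$ is of the ``first type'' in Remark \ref{two-types-uncrossings}, replacing a subsequence $i,j,i,j$ of $w(D)$ (with $i<j$) by $i,j,j,i$ in $w(D')$. Definition \ref{edge-label-def} then assigns $\lambda(D,D') = (i,j)$ to every such cover, so every label occurring inside the interval is an ordered pair $(k,m)$ with $k<m$; neither the label $L$ nor any pair $(r,s)$ with $r>s$ arises. On the Bruhat side, the isomorphism $D \mapsto \pi(D)$ of Theorem \ref{Bruhat-map} sends this cover to $\pi(D) \prec \pi(D')$ with $\pi(D') = (i,j)\,\pi(D)$, since exchanging $i_v$ with $j_v$ in $w(D)$ exchanges the values $i$ and $j$ in the one-line notation of $\pi(D)$, i.e., amounts to left multiplication by the transposition $(i,j)$. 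Consequently Dyer's label $\pi(D')\pi(D)^{-1}$ equals $(i,j)$, matching $\lambda(D,D')$ exactly.

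It remains to check that the restriction of $<_\lambda$ to the pairs $(k,m)$ with $k<m$ is a reflection order. By Definition \ref{reflection-order-definition} it suffices to verify the triangle condition on triples $\alpha, \beta, c\alpha+d\beta$ of positive roots with $c,d>0$; in type A every such triple has the form $\alpha = e_i - e_k$, $\beta = e_k - e_j$, $\gamma = \alpha+\beta = e_i - e_j$ for some $i<k<j$, encoded by the pairs $(i,k), (k,j), (i,j)$. Under lex order we have $(i,k) <_\lambda (i,j) <_\lambda (k,j)$ (same first coordinate with $k<j$, then smaller first coordinate with $i<k$), so $\alpha <_\lambda \gamma <_\lambda \beta$ as the axiom requires. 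Theorem \ref{Dyer-shelling} then identifies $\lambda|_{[D_1,D_2]}$ with Dyer's reflection-order EL-labeling on $[\pi(D_1),\pi(D_2)]$. The main obstacle will be careful bookkeeping around the duality, namely that upward steps in $P_n^*$ are uncrossings in $P_n$ that \emph{increase} inversions in the associated permutation, together with confirming that the transposition produced by Definition \ref{edge-label-def} really coincides with Dyer's $\pi(D')\pi(D)^{-1}$ at each cover; both points are already controlled by Theorem \ref{Bruhat-map} and the case analysis in Corollary \ref{start-set-cor}, so this is a matter of care rather than of genuine difficulty.
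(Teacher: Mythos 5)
Your proposal is correct and follows essentially the same route as the paper: invoke the poset isomorphism of Theorem~\ref{Bruhat-map}, note via Corollary~\ref{start-set-cor} that the hypothesis $S(D_1)=S(D_2)$ forces every cover in the interval to be of the first type so all labels are pairs $(i,j)$ with $i<j$, and match these with Dyer's labels $\pi(D')\pi(D)^{-1}=(i,j)$. The extra paragraph verifying directly that the restricted label order satisfies the reflection-order axiom is sound (and in type A the only triples to check are indeed of the form $e_i-e_k$, $e_k-e_j$, $e_i-e_j$), but the paper already disposes of this point by appealing to the general construction in Definition~\ref{standard-reflection-orders} and the remark following Definition~\ref{label-order}; so this part is a self-contained re-derivation rather than a new idea.
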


\begin{proof}
This is immediate from the definition of our labeling together with our earlier isomorphism in Theorem  ~\ref{Bruhat-map}  which  maps an allowable  uncrossing $D\prec D'$ 
of a pair of wires $i$ and $j$, namely one with $S(D) = S(D')$, 
 to the application of the reflection $(i,j)$ to the corresponding element of Bruhat order.
\end{proof} 

See Definition  ~\ref{ec-def} for the notions of EC-shellability, topological ascent and topological descent, used heavily in  the remainder of this section and all of the next section. %what follows.

\begin{theorem}\label{EL-shell-proof}
$P_n^*$  is  EC-shellable via edge labeling $\lambda $ (see Definition ~\ref{edge-label-def}) 
for $P_n^*$ with respect to the  ordering $<_{\lambda }$ (see Definition ~\ref{label-order})
on edge labels.   Therefore, $P_n$ is shellable.
\end{theorem}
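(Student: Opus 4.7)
My plan is to verify that $\lambda$ is an EC-labeling on $P_n^*$, which by Definition \ref{ec-def} requires showing that each $D_1 < D_2$ in $P_n^*$ admits a unique saturated chain from $D_1$ to $D_2$ consisting entirely of topological ascents. Equivalently, I would show that a unique lex smallest saturated chain from $D_1$ to $D_2$ exists: any lex smallest chain is automatically locally lex smallest on each rank-2 sub-interval (otherwise a local swap yields a globally smaller chain), and the converse is immediate.

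I would split the argument into three cases. First, when $D_2 \ne \hat{1}$ and $S(D_1) = S(D_2)$, Lemma \ref{Bruhat-isom} identifies $[D_1,D_2]$ with a type A Bruhat interval on which $\lambda$ restricts to Dyer's reflection-order EL-labeling, so Theorem \ref{Dyer-shelling} provides a unique ascending chain. Second, when $D_2 \ne \hat{1}$ but $S(D_1) <_{lex} S(D_2)$: the ordering $<_\lambda$ places every type 1 label $(i,j)$ with $i<j$ strictly below every type 2 label $(r,s)$ with $r>s$, so the lex smallest chain exhausts $S$-preserving uncrossings before any $S$-changing ones. I would identify an intermediate $D^\ast$ as the unique maximum of the subposet $\{D \in [D_1,D_2] : S(D) = S(D_1)\}$ (existence and uniqueness following from Lemma \ref{Bruhat-isom} and the Bruhat interval structure), apply Dyer on $[D_1,D^\ast]$, and show that $[D^\ast,D_2]$ admits a unique chain of type 2 covers, using the reverse linear ordering on type 2 labels together with Corollary \ref{start-set-cor} to force uniqueness. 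Third, when $D_2 = \hat{1}$, every chain terminates with a cover labeled $L$, and since $L$ sits between the type 1 and type 2 labels in $<_\lambda$, the lex smallest chain reaches a fully uncrossed coatom using only type 1 moves and then jumps via $L$ to $\hat{1}$, reducing to the first case on an appropriate Bruhat sub-interval.

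The main obstacle I anticipate is the rank-2 verification at intervals straddling a transition between label types -- for instance $D^\ast \prec D' \prec D''$ with $\lambda(D^\ast,D')$ of type 1 and $\lambda(D',D'')$ of type 2, or with one of the labels equal to $L$. At such an interval, thinness (a consequence of Eulerianness by Remark \ref{eulerian-then-thin}) means there is exactly one alternate intermediate element, which I would identify combinatorially using Lemma \ref{noncross-down}. I would then compute the two label pairs and confirm that the canonical chain is lex smallest in $<_\lambda$; the peculiar choices in Definition \ref{label-order} (lex on type 1, reverse linear in the second coordinate then the first on type 2, and $L$ wedged between) are precisely what make this local comparison succeed at every transition. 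Once these rank-2 comparisons are settled, the usual passage from local to global lex smallness finishes the proof of EC-shellability of $P_n^*$, and hence of shellability of $P_n$.
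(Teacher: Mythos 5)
Your high-level split --- Bruhat isomorphism plus Dyer's EL-labeling for the $S$-preserving part, a separate argument for the $S$-changing part, and special handling of $\hat 1$ --- does match the internal organization of the paper's proof. But the claimed reduction to rank-2 verifications rests on a false ``converse.'' It is not true that a chain which is lexicographically smallest on each of its own rank-2 sub-intervals (i.e.\ consists entirely of topological ascents) is therefore lex smallest globally: already in the Boolean lattice $B_3$ one can label the Hasse diagram so that two distinct maximal chains both consist entirely of topological ascents, only one of which is lex smallest. That implication is a \emph{consequence} of the EC property, not a tool available while trying to establish it, so assuming it renders the argument circular. This is precisely what Lemma \ref{semi-mod} is for: it is a genuinely global uniqueness argument, working downward from the top of a topologically ascending chain to pin down each element of the $S$-changing segment one at a time, and there is no ``usual passage from local to global'' that can replace it.

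Beyond this, two essential ingredients of the paper's proof are absent from your sketch. You never address Lemma \ref{descent-is-topol-descent}, which shows that every ordinary descent with top element below $\hat 1$ is a topological descent; in the paper this is a substantial case analysis over two consecutive uncrossings involving either three or four wires, the three-wire case handled by restriction to $P_3^*$ and Proposition \ref{j-i-topol-descent}. The ``rank-2 verification at transitions'' you mention is only a fragment of that analysis and is merely named, not carried out. Your pivot $D^\ast$, claimed to be the unique maximum of $\{D \in [D_1,D_2] : S(D) = S(D_1)\}$, is also not obviously well-defined: Lemma \ref{Bruhat-isom} endows a fixed-start-set slice with Bruhat structure, but intersecting that slice with the down-set $\{D : D \le D_2\}$ produces a lower order ideal of a Bruhat interval, and lower order ideals need not have unique maxima; proving that this one does is essentially the uniqueness you want. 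Finally, for $D_2 = \hat 1$ one must still prove that the lexicographically first chain uses only type-1 labels followed by $L$ and is weakly ascending, and that every deviation --- including any use of a type-2 label --- yields a topological descent; this is Lemmas \ref{0-exists-first}, \ref{0-unique} and \ref{0-big-small}, none of which appears in your proposal.
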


\begin{proof}
First note that shellability of $P_n^*$  will imply  shellability of $P_n$ since these posets  have the 
same chains and hence the same order complex as each other.

To prove EC-shellability of $P_n^*$, we need to prove for any $u<v$ there is a unique topologically ascending  saturated chain from $u$ to $v$.  As a word of caution, when we leave off the adjective ``topologically'' below, this is deliberate, and we really do mean traditional ascents and descents rather than topological ones in that case.  

Lemma ~\ref{semi-mod} proves for $u < v<\hat{1} $ in $P^*$ (in other words for $u > v > \hat{0}$ in $P$) 
that there is a unique saturated chain from $u$ to $v$ not having any topological descents, which
therefore must be the lexicographically first one.

For $v=\hat 1$ we need a separate argument:
Lemmas ~\ref{0-exists-first} and ~\ref{0-unique} prove 
that the lexicographically first 
saturated chain from $u$ to  $\hat 1$ 
has weakly ascending labels and that every other saturated chain  from 
$u$ to $\hat 1$ has at least one descent.  Lemma ~\ref{descent-is-topol-descent} proves that each 
descent $\lambda (x,y) > \lambda (y,z)$ for $z\ne \hat{1} $ is a topological descent, implying that 
each saturated chain with such a descent has a topological descent; moreover, 
any descent $\lambda (x,y) > \lambda (y,z)$ for $z=\hat{1} $  is a topological descent because the wire diagram
$x$ then has a single crossing with $\lambda (x,y) = (j,i) > (i,j) = \lambda (x,y')$ for $i<j$ the two wires comprising the unique wire crossing in $x$.
Thus, the lexicographically first saturated chain from $u$ to $\hat 1$ is the only topologically ascending chain. 
\end{proof}

\begin{corollary}\label{CW-proof}
The uncrossing order $P_n$ is a CW poset.
\end{corollary}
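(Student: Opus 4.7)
The plan is to deduce Corollary \ref{CW-proof} as an immediate application of Theorem \ref{Danaraj-Klee}, which reduces the CW poset property to the conjunction of four hypotheses: finite, graded, thin, shellable, with a $\hat{0}$ and at least one additional element. So I would verify these four ingredients for $P_n$ one by one, all of which are either already established in the excerpt or cited from Lam's earlier work.

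First, $P_n$ is finite by Corollary \ref{size-of-uncrossing-poset} and is graded by Remark \ref{rank-def}; moreover, it has a unique minimal element $\hat{0}$ by construction and certainly has additional elements (for instance, the maximal element $\hat{1}$ and all atoms) as soon as $n \geq 2$. Second, shellability of $P_n$ is precisely the content of Theorem \ref{EL-shell-proof}, which establishes that the edge labeling $\lambda$ of Definition \ref{edge-label-def} is an EC-labeling on $P_n^*$ and observes that $P_n$ and $P_n^*$ share the same order complex, so any shelling of one yields a shelling of the other.

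The remaining condition, thinness, is where I would invoke Lam's theorem from \cite{La14a} stating that the uncrossing poset $P_n$ is Eulerian (recalled as part of the introductory discussion preceding Theorem \ref{shellab-and-cw}). By Remark \ref{eulerian-then-thin}, every Eulerian poset is thin, so each rank-$2$ interval of $P_n$ contains exactly four elements. Combining these four facts and appealing to Theorem \ref{Danaraj-Klee} yields that $P_n$ is a CW poset.

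There is essentially no technical obstacle remaining at this stage: all the heavy lifting was accomplished in Theorem \ref{EL-shell-proof}, and thinness is outsourced to Lam's prior work. If one wished to be self-contained one could note, as a brief aside, that the EC-labeling $\lambda$ also induces a shelling on each closed interval $[u,v]$ of $P_n$, so the same argument shows that every closed interval is a CW poset as well; this is the strengthening mentioned in the introduction just after the statement of Theorem \ref{shellab-and-cw}.
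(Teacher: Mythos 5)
Your proposal is correct and follows the same route as the paper: shellability from Theorem \ref{EL-shell-proof}, gradedness from Remark \ref{rank-def}, thinness from Lam's Eulerian result via Remark \ref{eulerian-then-thin}, and then Theorem \ref{Danaraj-Klee}. The paper's own proof is a terser version of exactly this argument, leaving the finiteness and $\hat{0}$-plus-another-element conditions implicit, which you also rightly note are immediate.
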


\begin{proof}
Our proof of Lam's shellability conjecture given 
 in Theorem ~\ref{EL-shell-proof}   will imply that 
uncrossing posets are CW posets, due to the fact that  they are by definition 
graded posets (see Remark ~\ref{rank-def}) 
and were already proven to be  Eulerian in \cite{La14a}.  Thus, 
Theorem ~\ref{Danaraj-Klee} applies.
\end{proof}

%This shelling for $P_n$ will also induce a shelling for each interval in the face poset for the edge product space of phylogenetic trees, namely a shelling for each interval  in the so-called  Tuffley poset.  This is explained in Section ~\ref{Tuffley-section}.  

\section{Key Technical Lemmas}\label{key-lemma-section}

Now we turn to the heart of the proof that our labeling $\lambda $ for $P^*_n$ is an EC-labeling, namely  the lemmas which together provide the technical details of the proof.   First we handle intervals $[u,v]$ for $v < \hat{1}$, proving each such interval has a unique topologically ascending chain.  We begin by showing how traditional descents  contained within such intervals always yield topological descents, thereby considerably restricting the possibilities for how topologically ascending chains may arise.  

\begin{lemma}\label{descent-is-topol-descent}
For each  $u < v < \hat{1}$ in $P_n^*$, any descent in any saturated chain from $u$ to $v$ (with respect to the edge labeling $\lambda $) is a 
topological descent.
\end{lemma}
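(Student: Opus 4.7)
The plan is to reduce to analyzing the rank-2 interval $[x,z]$ determined by a descent $x \prec y \prec z$ with $\lambda(x,y) >_\lambda \lambda(y,z)$ and $z \ne \hat 1$. Since $P_n$ is Eulerian (from \cite{La14a}) and hence $P_n^*$ is thin (Remark \ref{eulerian-then-thin}), the open interval $(x,z)$ contains exactly one other element $y'$, giving a unique second saturated chain $x \prec y' \prec z$. The goal is to show that its label sequence is lexicographically smaller than $(\lambda(x,y), \lambda(y,z))$ in the order $<_\lambda$. Because $z \ne \hat 1$, no label equals $L$; each label is either ``type A'' (of the form $(i,j)$ with $i<j$, from an $S$-preserving uncrossing) or ``type B'' ($(r,s)$ with $r>s$, from an $S$-changing uncrossing), and every type A label is $<_\lambda$ every type B label.

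First I would handle the case when both labels of the chain are type A. Then $S(x) = S(y) = S(z)$, and by Lemma \ref{Bruhat-isom} the interval $[x,z]$ is isomorphic to a rank-2 interval of type A Bruhat order on which $\lambda$ agrees with Dyer's reflection-order EL-labeling (Theorem \ref{Dyer-shelling}). Since in any EL-labeling the unique weakly ascending saturated chain is also the lex-smallest, our descending chain cannot be lex-smallest; hence $x \prec y' \prec z$ is lex-smaller, giving a topological descent.

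In the remaining case, since $\lambda(x,y) >_\lambda \lambda(y,z)$ combined with the fact that every type A label is below every type B label rules out the pattern (type A, type B), the first label $\lambda(x,y)$ must be type B. The strategy is to prove $\lambda(x,y') <_\lambda \lambda(x,y)$, which is already enough to force a lex-smaller label sequence regardless of the second labels. I would split according to whether the two pairs of wires uncrossed by the two covers of chain 1 are disjoint or share a common wire. In the disjoint case, the local word structure at each pair is unaffected by the other uncrossing, so each label is determined by $x$ and $z$ alone; the two chains then carry the same multiset of labels in opposite orders, and the descent condition directly yields $(\lambda(y,z), \lambda(x,y)) <_{\mathrm{lex}} (\lambda(x,y), \lambda(y,z))$ in $<_\lambda$.

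The main obstacle will be the shared-wire subcase, where an $N_2$-type uncrossing can reshuffle the crossings and the wire naming in the intermediate diagram, so that the two chains may uncross genuinely different pairs of wires and carry different label multisets (rather than a simple swap). Here I would invoke the combinatorial characterization in Lemma \ref{noncross-down} to enumerate the finitely many possible local word configurations on the at-most-three wires involved in the two cover relations, and directly compare the resulting first labels using the structure of $<_\lambda$: lexicographic within type A, reverse-linear on the second coordinate (then first) within type B, and type A below type B. In each configuration one verifies that the choice of first uncrossing producing the descent $\lambda(x,y)$ is not the $<_\lambda$-smallest option available at $x$, so that $\lambda(x,y') <_\lambda \lambda(x,y)$, completing the proof.
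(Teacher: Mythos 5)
Your overall blueprint (reduce to the rank-$2$ interval, use thinness to get the unique $y'$, handle the start-set-preserving case via Lemma~\ref{Bruhat-isom}, then do casework on disjoint vs.\ shared wires) is close in spirit to the paper's proof, which splits into the ``four distinct wires'' and ``three wires total'' cases. Your use of Lemma~\ref{Bruhat-isom} for the case of two type~A labels is a clean observation. However, there is a genuine gap in your disjoint-wires case.

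You assert that when the two uncrossings involve disjoint pairs, ``the local word structure at each pair is unaffected by the other uncrossing, so each label is determined by $x$ and $z$ alone; the two chains then carry the same multiset of labels in opposite orders.'' This is false: a type~B (start-set-changing) uncrossing can force a renaming of other wires, including wires in the disjoint second pair, so the label attached to the second uncrossing genuinely depends on the order in which the two uncrossings are performed. Concretely, take $w(x) = 1,2,3,4,3,1,4,2$ in $P_4$. The two crossings at issue are on the disjoint pairs $\{1,2\}$ and $\{3,4\}$. One chain uncrosses $\{3,4\}$ first (label $(4,3)$) and then $\{1,2\}$ (label $(2,1)$); the other uncrosses $\{1,2\}$ first (label $(2,1)$), after which the remaining crossing, though the same geometric crossing, now sits on wires renamed to $\{2,3\}$, so the second step is labeled $(3,2)$. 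The two label multisets $\{(4,3),(2,1)\}$ and $\{(2,1),(3,2)\}$ are not equal, so the ``same multiset reversed'' argument fails. (Here the conclusion still holds since $(4,3) <_\lambda (2,1)$, but not for the reason you give.) The paper avoids this by proving only a weaker invariance---that the \emph{type} (A vs.\ B) of each label, and the identity of the smallest of the four wires, survive reversal---and then doing casework on the signs of the labels; you would need something comparable. Additionally, the shared-wire case as you describe it (enumerate $P_3$-configurations) is only a plan, and it suppresses a real issue the paper addresses explicitly: one must verify that the relative order of labels under $<_\lambda$ is preserved when restricting a three-wire subconfiguration to $P_3^*$ and re-including it, which is not automatic since labels carry wire names rather than just local positions.
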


\begin{proof}
It suffices to prove the following: 
given $x\prec y \prec z$ in $P_n^*$ with labels $\lambda (x,y) = (p,q)$ and 
$\lambda (y,z) = (r,s) $ such that $(p,q) >_{\lambda } (r,s)$,  then there is a  saturated chain $x\prec y'\prec z$ with 
lexicographically smaller label sequence from $x$ to $z$.   We break the proof of 
this assertion  into cases, based on the various ways a descent $\lambda (x,y) >_{\lambda }
\lambda (y,z)$  may arise.  

First suppose
there are  four different wires involved in the  two consecutive wire uncrossings $x\prec y \prec z$  in $P_n^*$ comprising a 
descent $\lambda (x, y) >_{\lambda }  \lambda (y, z)$. Notice that these  two uncrossings may  be carried out in the other order
yielding some $x\prec y' \prec z$ in $P_n^*$, since reversing the order in which the two uncrossings are carried out  will not impact the fact that $z$ has exactly two fewer crossings than $x$, forcing $y'$ to have exactly one more crossing than $z$ and one fewer crossing than $x$.  
Reversing the order in which these two uncrossings are carried out preserves both the wire name at the earlier of the two endpoints for  the  smallest of the four wires involved in the two uncrossings   as well as
preserving the property that this endpoint belongs to the smallest of the four wires involved in the two uncrossings.
Letting $\lambda (x,y') = (a,b)$ and $\lambda (y',z) = (c,d)$, we claim  that we have $p<q$ if and only if we have $c<d$ and likewise we have $r<s$ if and only if we have $a<b$; these observations follow from the fact that deleting other wires not involved in the uncrossings  being performed  does not impact which of these wires have endpoints that are encountered first in clockwise order proceeding from our basepoint.

%\begin{figure}[h]
%\begin{picture}(250,100)(-75,10)
%%%\includegraphics[width=0.37\textwidth, angle = 0]{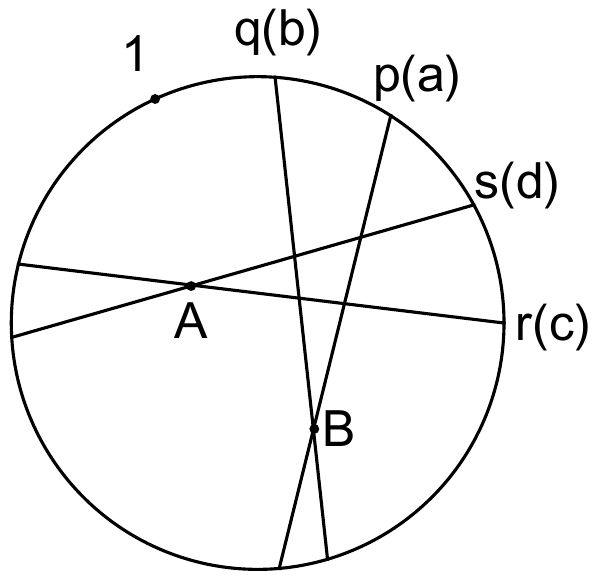}
%\pdffig{figure=figA.pdf,height=4.5cm,width=4.5cm}
%\end{picture}
%\caption{Figure A}
%\label{height}
%\end{figure}

These observations together with our label ordering and the fact that the pair of uncrossing steps uses four distinct wires  will yield $(a,b) <_{\lambda } (p,q)$ from $(p,q) >_{\lambda } (r,s)$, just as needed, as we now check by running through the various possible  cases. 
 The case with $p>q$ and $r>s$ must have $q<s$ in order for $x\prec y \prec z$
 to have a descent $(p,q) = \lambda (x,y) >_{\lambda }  \lambda (y,z)  = (r,s)$ 
 in its labels.
 Hence, such a descent   must have $q$ as the 
overall smallest wire amongst the four wires involved in the two uncrossing steps. 
This yields the result in this case whether we have $a<b$ (which implies  
$(a,b) <_{\lambda } (p,q)$ due to 
having $a<b$ and $p>q$)
 or we have $a>b$ (since in this case we have $b>q$ with $a>b$ and $p>q$, hence 
$(a,b) <_{\lambda } (p,q)$).  See Figure 4.
%\includepdf[width=0.4\textwidth, angle = 0]{figA.pdf}
\begin{figure}[htb]
\begin{center}
\includegraphics[height=1.5in,width=1.5in,angle=0]{figA.pdf}
%\textwidtth, trim={0.5cm 0.5cm 0.5cm 11.3cm}, clip]{figA.pdf}
\caption{$u\prec z$ and $x\prec y'$ uncrossings at $A$;  $x\prec y$ and $y'\prec z$ uncrossings at $B$}
\end{center}
\end{figure}
This same analysis also applies in the case with 
$p>q$ and $r<s$ in the event that  we also have $r>q$.  
If we instead have 
$p>q$ and $r<s$ with $r<q$, then this implies  $a<b$ with  $r=a$, by our observations above,
 yielding the result.
   See Figure 5.
    Finally, for  $p<q$, then having a descent in $x\prec y\prec z$ means 
  we also must have $r<s$ with $r<p$, which implies $a<b$ with  $r=a$, 
  giving the result in this case. 
See Figure 6.
This completes the proof for all possible  cases  with 
four different wires involved in two consecutive   wire 
uncrossings carried out by cover relations
$x\prec y \prec z$.

   \begin{figure}[htb]
\begin{center}
\includegraphics[height=1.5in,width=1.5in,angle=0]{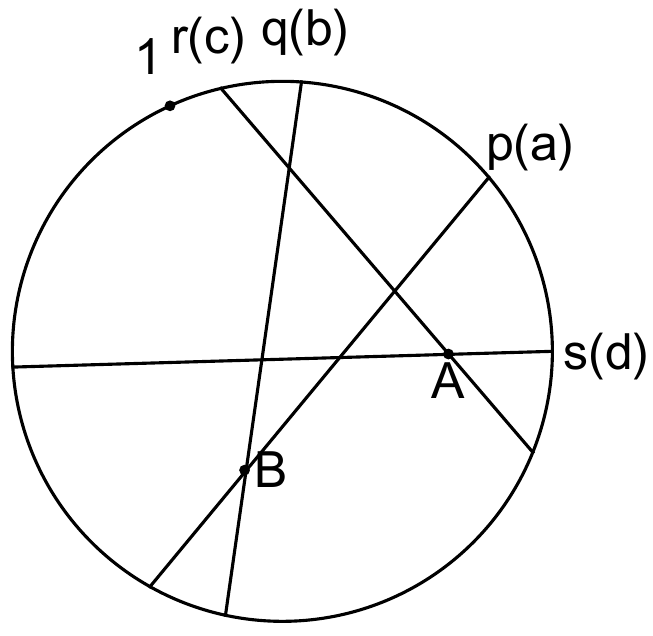}
%\textwidtth, trim={0.5cm 0.5cm 0.5cm 11.3cm}, clip]{figA.pdf}
\caption{$x\prec y'$ and $y\prec z$ uncrossings at $A$; % \hspace{.1in}
 $x\prec y$ and $y'\prec z$ uncrossings at $B$}
\end{center}
\end{figure}  

\begin{figure}[htb]
\begin{center}
\includegraphics[height=1.5in,width=1.5in,angle=0]{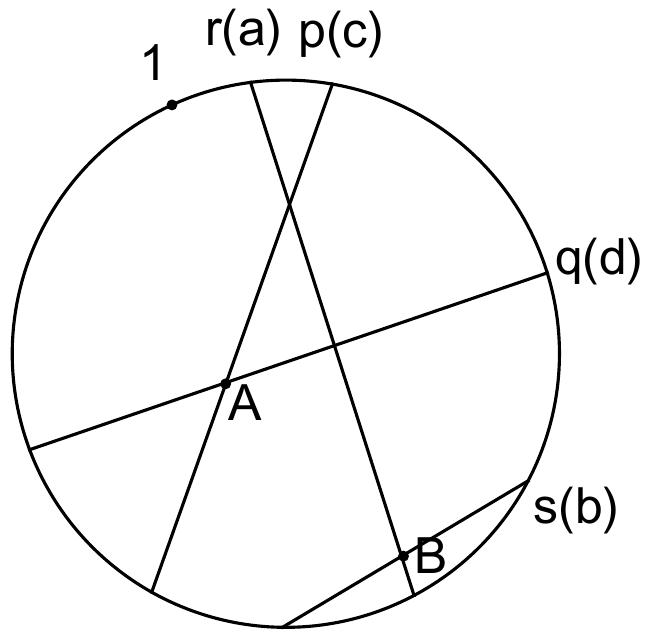}
%\textwidtth, trim={0.5cm 0.5cm 0.5cm 11.3cm}, clip]{figA.pdf}
\caption{$x\prec y$ and $y'\prec z$ uncrossings at $A$; $x\prec y'$ and $y\prec z$ uncrossings at $B$}
\end{center}
\end{figure}

Now to $x\prec y \prec z$ carrying out two uncrossings involving a total of three wires.  
All of the possible
cases in $P_n^*$ correspond naturally (by restriction to these
three wires)  to  cases  that  arise in $P_3^*$. 
This description of various cases  involving three wires according to how they restrict to $P_3^*$ 
 seems to be a good way to organize these cases  for $P_n^*$.    
 We will prove that each such descent in $P_n^*$  restricts to 
a descent  in $P_3^*$.    The authors have checked by hand 
that all descents in $P_3^*$  are  topological descents.  
See Figure 7 for this edge labeling for 
$P_3^*$, from 
which the interested reader may also  
check this claim quite easily for $P_3^*$; it is important to note that one must traverse 
the cover relations downward rather than upward  in Figure 7
so as to consider saturated chains in $P_3^*$ rather than in $P_3$.   We will also 
prove for the inclusion map from $P_3^*$ to $P_n^*$ that is inverse to the aforementioned 
restriction map 
 that each  topological descent  in $P_3^*$ includes into $P_n^*$ as a 
topological descent in $P_n^*$.  Once these claims are proven, 
this will  yield the desired result.

  \begin{figure}[h]\label{poset-example}
        \begin{picture}(325,500)(-80,-25)
 
       \put(77,441){\line(1,0){36}}
      \put(80,462){\line(1,-1){27}}
      \put(83,434){\line(1,1){28}}

        \put(83,434){\circle*{4}}
              \put(113,441){\circle*{4}}
       \put(77,441){\circle*{4}}
       \put(108,434){\circle*{4}}
       \put(80,462){\circle*{6}}
       \put(111,462){\circle*{4}}

 \put(70,465){$1$}
 \put(115,462){$2$}
 \put(118,439){$3$}
 \put(112,425){$1$}
 \put(72,425){$2$}
 \put(66,439){$3$}

\put(40,445){$\hat{1} = $}
       \put(95,450){\circle{100}}
       
       \curve(5,392,  80,420)
       \curve(95,392,  95,420)
       \curve(180,392,  110,420)
       
       \put(5, 410){$(1,2)$}
       \put(98, 400){$(2,3)$}
       \put(155, 410){$(3,1)$}

               \put(95,365){\circle{100}}             
      \put(80,377){\line(1,-1){27}}
\curve(77, 356, 94, 362, 111,377)
\curve(83,349, 98, 356, 113,356)

        \put(83,349){\circle*{4}}
              \put(113,356){\circle*{4}}
       \put(77,356){\circle*{4}}
       \put(108,349){\circle*{4}}
       \put(80,377){\circle*{6}}
       \put(111,377){\circle*{4}}
       
       \put(70, 379){$1$}

        \put(-5,365){\circle{100}}
\curve(-17, 349,  -15, 363, -20, 377)
\curve(8,349, 5, 363,  11, 377)
\curve(-23,356,  13, 356)

        \put(-17,349){\circle*{4}}
              \put(13,356){\circle*{4}}
       \put(-23,356){\circle*{4}}
       \put(8,349){\circle*{4}}
       \put(-20,377){\circle*{6}}
       \put(11,377){\circle*{4}}

 \put(-30, 379){$1$}
 
        \put(195,365){\circle{100}}
      \put(183,349){\line(1,1){28}}
\curve(180,377, 197, 363, 213, 356)
\curve(177,356,  192, 357, 208, 349)

        \put(183,349){\circle*{4}}
              \put(213,356){\circle*{4}}
       \put(177,356){\circle*{4}}
       \put(208,349){\circle*{4}}
       \put(180,377){\circle*{6}}
       \put(211,377){\circle*{4}}

\put(170,379){$1$}
        
        \curve(-15,335,  -45,250)
        \curve(-5,335,  5,250)
        \curve(5,335, 60,250)
        \curve(15,335, 115,250)
        
        \put(-57,295){$(1,3)$}
        \put(-26, 290){$(2,3)$}
        \put(26, 256){$(3,2)$}
        \put(52, 273){$(3,1)$}

        \put(41, 325){$(1,2)$}
        \put(65, 301){$(1,3)$}
        \put(93, 306){$(3,1)$}
        \put(122, 328){$(2,1)$}
        
        \put(86, 286){$(1,2)$}
        \put(110, 264){$(3,2)$}
        \put(188, 290){$(2,3)$}
        \put(222, 295){$(2,1)$}

        \curve(80,335, -35, 250)
        \curve(90,335,  10, 250)
        \curve(100,335,  170, 250)
        \curve(110,335,  230, 250)
        
        \curve(175, 335, 70,250) 
        \curve(185, 335, 125, 250) 
        \curve(195, 335, 180, 250)
        \curve(205, 335, 240, 250)

\put(65,220){\circle{100}}
\curve(81,232, 78, 221, 83, 211)
\curve(50,232, 56, 218, 53,204)
\curve(47,211, 62, 213, 78,204)

        \put(53,204){\circle*{4}}
              \put(83,211){\circle*{4}}
       \put(47,211){\circle*{4}}
       \put(78,204){\circle*{4}}
       \put(50,233){\circle*{6}}
       \put(81,232){\circle*{4}}

\put(40, 235){$1$}

\put(125,220){\circle{100}}
\curve(110,232, 127, 217, 143,211)
\curve(107,211, 114,211, 113,204)
\curve(141,232, 135, 218, 138,204)

        \put(113,204){\circle*{4}}
              \put(143,211){\circle*{4}}
       \put(107,211){\circle*{4}}
       \put(138,204){\circle*{4}}
       \put(110,233){\circle*{6}}
       \put(141,232){\circle*{4}}

\put(100, 235){$1$}

\put(5,220){\circle{100}}
\curve(-13,211, 4, 216,  21, 232)
\curve(23,211, 17, 210, 18, 204)
\curve(-10, 232, -4, 218,  -7, 204)

        \put(-7,204){\circle*{4}}
              \put(23,211){\circle*{4}}
       \put(-13,211){\circle*{4}}
       \put(18,204){\circle*{4}}
       \put(-10,233){\circle*{6}}
       \put(21,232){\circle*{4}}

\put(-20,235){$1$}

\put(-55,220){\circle{100}}
\curve(-73,211, -67, 221, -70,232)
\curve(-67,204, -51, 214, -37,211)
\curve(-39,232, -45, 218, -42,204)

        \put(-67,204){\circle*{4}}
              \put(-37,211){\circle*{4}}
       \put(-73,211){\circle*{4}}
       \put(-42,204){\circle*{4}}
       \put(-70,233){\circle*{6}}
       \put(-39,232){\circle*{4}}

\put(-80,235){$1$}

\put(185,220){\circle{100}}
\curve(170,232, 187, 217,  203,211)
\curve(173,204, 185, 207, 198,204)
\curve(201,232,  184, 217, 167,211)

        \put(173,204){\circle*{4}}
              \put(203,211){\circle*{4}}
       \put(167,211){\circle*{4}}
       \put(198,204){\circle*{4}}
       \put(170,233){\circle*{6}}
       \put(201,232){\circle*{4}}

\put(160,235){$1$}

\put(245,220){\circle{100}}
\curve(230,232, 245, 228, 261,232)
\curve(227,211, 242, 211,  258,204)
\curve(233,204,  248, 211, 263,211)

        \put(233,204){\circle*{4}}
              \put(263,211){\circle*{4}}
       \put(227,211){\circle*{4}}
       \put(258,204){\circle*{4}}
       \put(230,232){\circle*{6}}
       \put(261,232){\circle*{4}}

\put(220,232){$1$}

\curve(-55, 190,  -35, 105)
\curve(-45, 190,  25, 105)
\put(-75, 150){$(2,3)$}
\put(-34, 177){$(3,2)$}

\curve(0, 190, -25, 105)
\curve(10, 190, 90, 105)
\put(-21, 115){$(1,2)$}
\put(5, 160){$(2,1)$}

\curve(60, 190, 35, 105)
\curve(70, 190, 145, 105)
\put(16,130){$(1,3)$}
\put(65,160){$(3,1)$}

\curve(120, 190, 100, 105)
\curve(130, 190, 155, 105)
\put(91,175){$(2,1)$}
\put(142,148){$(1,2)$}

\curve(180, 190, 45, 105)
\curve(190, 190, 210, 105)
\put(67,137){$(1,2)$}
\put(192,181){$(2,1)$}

\curve(240, 190, 115, 105)
\curve(250, 190, 220, 105)
\put(161,128){$(3,2)$}
\put(233,130){$(2,3)$}

\put(95,75){\circle{100}}
\curve(80,87, 95, 83, 111,87)
\curve(77,66, 83,66,  83,59)
\curve(108,59, 108,66, 113,66)

        \put(83,59){\circle*{4}}
              \put(113,66){\circle*{4}}
       \put(77,66){\circle*{4}}
       \put(108,59){\circle*{4}}
       \put(80,88){\circle*{6}}
       \put(111,87){\circle*{4}}

\put(70,90){$1$}

\put(155,75){\circle{100}}
\curve(140,87, 168,59)
\curve(171,87, 168, 76, 173,66)
\curve(143,59, 143,66, 137,66)

        \put(143,59){\circle*{4}}
              \put(173,66){\circle*{4}}
       \put(137,66){\circle*{4}}
       \put(168,59){\circle*{4}}
       \put(140,88){\circle*{6}}
       \put(171,87){\circle*{4}}

\put(130,90){$1$}

\put(35,75){\circle{100}}
\curve(20,87, 23, 76, 17,66)
\curve(23,59, 35, 63, 48,59)
\curve(53,66, 48, 76,  51,87)

        \put(23,59){\circle*{4}}
              \put(53,66){\circle*{4}}
       \put(17,66){\circle*{4}}
       \put(48,59){\circle*{4}}
       \put(20,88){\circle*{6}}
       \put(51,87){\circle*{4}}

\put(10,90){$1$}

\put(-25,75){\circle{100}}
\curve(-43,66,  -37, 76, -40,87)
\curve(-37,59, -9,87)
\curve(-7,66, -13,66, -12,59)

        \put(-37,59){\circle*{4}}
              \put(-7,66){\circle*{4}}
       \put(-43,66){\circle*{4}}
       \put(-12,59){\circle*{4}}
       \put(-40,88){\circle*{6}}
       \put(-9,87){\circle*{4}}

\put(-50,90){$1$}

\put(215,75){\circle{100}}
\curve(200,87, 215, 83,  231,87)
\curve(203,59, 215, 62, 228,59)
\curve(233,66, 215, 69, 197,66)

        \put(203,59){\circle*{4}}
              \put(233,66){\circle*{4}}
       \put(197,66){\circle*{4}}
       \put(228,59){\circle*{4}}
       \put(200,87){\circle*{6}}
       \put(231,87){\circle*{4}}

\put(190,89){$1$}

\curve(75, -5, -20, 47)
\curve(85, -5, 35, 47)
\curve(95,-5, 95, 47)
\curve(105,-5, 150, 47)
\curve(115, -5, 200, 47)

\put(-7,25){$L$}
\put(38,25){$L$}
\put(83,25){$L$}
\put(118,25){$L$}
\put(185,25){$L$}

\put(92,-25){$\hat{0}$}
          
        \end{picture}
        \caption{Dual $EC$ labeling for $P_3$} 
\end{figure}

 Consider  an edge label $(a,b)$ for an uncrossing in $P_n^*$ arising in the case of a 
 descent $x\prec y \prec z$ involving a total of three  wires in the two consecutive uncrossings.
Also consider the unique uncrossing  $x\prec y'$ for $y'\ne y$ and  $y'\prec z$, noting that 
$x\prec y'\prec z$ also carries out uncrossings involving only these same  
 three wires.   
  Let us show now that passing back and forth between  $P_3^*$ to $P_n^*$ by wire inclusion and by restriction to these three wires, respectively,  will not impact the relative order of the labels $\lambda (x,y)$ and $\lambda (x,y')$.  For convenience in doing this, let us
 denote by $(a',b')$ the corresponding 
 edge label for $P_3^*$ obtained by  restriction to these three wires.
  This desired result will 
follow directly from the following  three facts that are themselves
 immediate from the definitions of the labels for uncrossing steps and of the 
 label ordering $<_{\lambda }$:
\begin{enumerate}
\item 
A label $(a,b)$ 
has $a < b$ (resp. $a>b$)  if and only if  the label $(a',b')$ has $a'<b'$ (resp. $a'>b'$).
\item
Two labels $(a,b)$ and $(c,d)$ in $P_n^*$ for uncrossings involving 
  a total of three wires that either occur in consecutive steps $x\prec y\prec z$ or in steps 
  $x\prec y$ and $x\prec y'$ will 
satisfy $\min \{ a,b\} < \min \{ c,d\} $
if and only if the  labels for the corresponding uncrossings  in $P_3^*$ satisfy 
$\min \{ a',b' \} < \min \{ c',d'\} $.
\item 
For $\lambda (x,y) = (a,b)$ and $\lambda (x,y') = (c,d)$, we have $\min \{ a,b\} = \min \{ c,d \} $
if and only if $\min \{ a',b'\} = \min \{ c',d'\} $.  In this case of equality, we also have 
$\max \{ a,b \} < \max \{ c,d\} $ if and only if $\max \{ a',b' \} < \max \{ c',d' \} $.
\end{enumerate}  

If we can show  that each scenario producing a descent 
$\lambda (x,y) >_{\lambda }  \lambda (y,z)$  in $P_n^*$ with three wires involved in the wire 
uncrossings   corresponds to a situation also giving a descent in $P_3^*$, we can use the above observations to deduce that each such descent in 
$P_n^*$ is a topological descent by the following chain of reasoning.  
Having a descent in  $P_n^*$ will restrict to one in $P_3^*$ which will then   imply there is a lexicographically 
earlier label sequence  from $x$ to $z$ in $P_3^*$.
 By  virtue of 
 the   preservation of relative order of  labels on $x\prec y$ and $x\prec y'$ upon restriction from 
 $P_n^*$ to $P_3^*$ and the inverse operation of inclusion of $P_3^*$ into $P_n^*$, a topological 
 descent in $P_3^*$ will correspond via wire inclusion to a topological descent in $P_n^*$.  That is,
   the lexicographically earlier label sequence in $P_3^*$ from $x$ to $z$  (guaranteed to exist in $P_3^*$ 
 by virtue of  $x\prec y\prec z$ being a topological descent in $P_3^*$) 
will imply  the existence of a corresponding  lexicographically earlier 
label sequence from $x$ to $z$ in $P_n^*$ by inclusion of $x\prec y'$ into $P_n^*$ by wire 
inclusion.  This will ensure that  $x\prec y\prec z$ will be  a 
topological descent in $P_n^*$.

Now to the claim about descents in $P_n^*$ restricting to descents in $P_3^*$ for $x\prec y\prec z$
with uncrossings involving a total of three wires.  
Suppose we have label $\lambda (x,y) = (r,s)$ and then $\lambda (y,z) = (p,q)$ 
for $(r,s) >_{\lambda } (p,q)$ in $P_n^*$. 
If we have   $r<s$, then the 
uncrossing step with label $(r,s)$  renames only  the later  endpoints (in clockwise order proceeding from basepoint)  of the   wires being uncrossed.  But we must have $p<q$ in this case in order  to have a descent and also must have 
$p\le r$; these 
 properties are not 
impacted  in passing from $p,q,r,s$ to $p',q',r',s'$, so the descent stays a descent upon 
restriction to $P_3^*$, completing
the $r<s$ case.
 Now suppose 
 $r>s$.  If we have a descent comprised of $\lambda (x,y) = (r,s)$ and
 $\lambda (y,z) = (p,q)$ for $p<q$ in $P_n^*$,
  then  the corresponding consecutive labels $(r',s')$ and $(p',q')$ in $P_3^*$ also comprise a
  descent in $P_3^*$, 
  since $r>s$ implies $r'>s'$ and $p<q$ implies $p'<q'$  in this case.
   Likewise, for $r>s$ and $p>q$ with $q>s$, restricting  from $P_n^*$ to $P_3^*$ 
    will yield $r'>s', p'>q'$ and $q'>s'$ by the three observations listed earlier in this proof.
      Finally, observe that it is not possible to have consecutive labels $\lambda (x,y) = (j,i) $ and then 
  $\lambda (y,z) = (k,i)$ for $i<j<k$ in any saturated chain in $P_n^*$, since the uncrossing step $x\prec y$ given by $(j,i)$ will cause the wires $i$ and $k$ no longer  to cross each other, rendering
  $y\prec z$ with label  $\lambda (y,z) = (k,i)$ impossible.  This completes the $r>s$ case, and hence completes the   case  
  in which a total of  three wires are involved in the two 
  consecutive uncrossing steps comprising a descent in $P_n^*$.
 \end{proof} 
  
The next lemma will complete the proof that each interval $[u,v]$ for $v<\hat{1}$ has a unique topologically ascending chain from $u$ to $v$, with this saturated chain being the lexiocographically earliest saturated chain from $u$ to $v$.  
  
  \begin{lemma}\label{semi-mod}
Given $u< v < \hat{1} $ in $P_n^*$, there is a unique saturated chain from $u$ to $v$ with no topological descents (with respect to edge labeling $\lambda $).
\end{lemma}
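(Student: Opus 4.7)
I plan to proceed by induction on the rank $r$ of the interval $[u,v]$ in $P_n^*$. The base case $r=1$ is immediate, since the unique saturated chain contains no triples $u_{i-1}\prec u_i\prec u_{i+1}$ to examine.

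For the inductive step with $r\ge 2$, I first construct a candidate chain greedily. Let $u_1$ be the atom of $[u,v]$ with $\lambda(u,u_1)$ minimum under $<_{\lambda}$; uniqueness of such a $u_1$ follows because a label $(k,m)$ or $(m,k)$ specifies both the pair of wires being uncrossed and the manner of uncrossing (per Definition \ref{edge-label-def}), hence determines the cover target. By the inductive hypothesis applied to $[u_1,v]$, there is a unique topologically ascending chain $u_1\prec u_2\prec\cdots\prec v$, and I take $C=(u,u_1,u_2,\ldots,v)$ as my candidate. Verifying that $C$ has no topological descent reduces to the new triple $u\prec u_1\prec u_2$: since $P_n$ is Eulerian by \cite{La14a}, hence thin by Remark \ref{eulerian-then-thin}, the rank-$2$ interval $[u,u_2]$ has exactly one other maximal chain $u\prec u_1^{*}\prec u_2$. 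Both $u_1,u_1^{*}$ are atoms of $[u,v]$ and are distinct, so the greedy choice forces $\lambda(u,u_1)<_{\lambda}\lambda(u,u_1^{*})$, making the label pair along $C$ lex-smallest in $[u,u_2]$, i.e., a topological ascent.

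For uniqueness, suppose $C'=(u,w_1,w_2,\ldots,v)$ also has no topological descent. If $w_1=u_1$, then the inductive hypothesis on $[u_1,v]$ forces the tails to agree, so $C'=C$. Otherwise $w_1\ne u_1$, so $\lambda(u,u_1)<_{\lambda}\lambda(u,w_1)$. By Lemma \ref{descent-is-topol-descent} applied along $C'$, the absence of topological descents in $C'$ implies $C'$ has no traditional descents; i.e., its label sequence is weakly ascending in $<_{\lambda}$. By Definition \ref{label-order} together with Corollary \ref{start-set-cor}, the $S$-preserving covers carry labels $(k,m)$ with $k<m$, which are all $<_{\lambda}$-smaller than any $S$-changing label $(m,k)$ with $m>k$; hence in the weakly ascending chain $C'$, all $S$-preserving steps must come before any $S$-changing step. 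When $S(u)=S(v)$, every cover of $C'$ is $S$-preserving, Lemma \ref{Bruhat-isom} identifies $[u,v]$ with a type A Bruhat interval under which my labeling coincides with Dyer's reflection-order EL-labeling (Theorem \ref{Dyer-shelling}), and Dyer's EL property yields the unique weakly ascending chain, forcing $w_1=u_1$ and contradicting $w_1\ne u_1$. In the general case $S(u)<_{lex}S(v)$, the $S$-preserving prefix of $C'$ lies in a Bruhat-type sub-interval to which the same Dyer argument applies, again pinning down the first step of $C'$ to be $u_1$ whenever $u_1$ is $S$-preserving.

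The main obstacle is the case where the greedy first step $u_1$ is itself $S$-changing, i.e., where all atoms of $[u,v]$ correspond to $(m,k)$-type uncrossings, since Lemma \ref{Bruhat-isom} does not apply directly there. To handle this, I expect to use Lemma \ref{noncross-down} together with a direct case analysis of the $(m,k)$-label ordering (which, reading off Definition \ref{label-order}, is a reverse-lex order and has the flavor of a reflection order for an opposite Bruhat-type structure) to establish the analogue of Dyer's EL property at rank-$2$ intervals all of whose cover relations are $S$-changing. I also need to verify that the splitting point between the $S$-preserving prefix and the $S$-changing tail is canonical across all weakly ascending chains, which I anticipate will follow from the start-set trajectory being determined by the label sequence via the $<_{\lambda}$-separation of the two label types. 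Once both halves are shown to be rigid by EL-type arguments, the uniqueness of the weakly ascending chain, and hence of the topologically ascending chain, follows.
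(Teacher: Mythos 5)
Your proposal has a genuine and substantial gap, which you acknowledge yourself with the phrases ``I expect to use'' and ``I anticipate.'' The hard part of this lemma is precisely the case $S(u) \ne S(v)$ where the chain contains $S$-changing cover relations (those labeled $(j,i)$ with $j>i$); this is the part you leave as a sketch. The paper devotes the bulk of its proof to this case: after establishing two observations about how cover relations move the letters of the word, it works top-down from $v$, showing that the coatom $v_r$ of $[u,v]$ on the unique topologically ascending chain has label $(j,l)$ with $l$ forced to be the smallest letter satisfying a specific structural condition (otherwise a later, larger $(j',l)$ label would create a descent), then that $j$ is determined by $w(v)$ and $l$, and finally --- because two \emph{distinct} covers $v_r \prec v$ and $v_r' \prec v$ may carry the \emph{same} label (labels being computed at the lower element) --- ruling out this ambiguity through a case analysis of word patterns (cases (a) and (b)) that ultimately invokes Proposition~\ref{j-i-topol-descent}. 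None of this is present in your proposal, and the difficulty cannot be waved away: it is where the subtlety of the lemma lives.

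Moreover, your proposed route for closing the gap is not clearly adequate. A ``Dyer-style EL property'' restricted to rank-$2$ intervals with all $S$-changing covers is a local statement, and by itself does not yield global uniqueness of the topologically ascending chain --- that is exactly the distinction between the rank-$2$ topological-ascent condition and the unique-chain condition one must prove. Your remaining claim, that ``the splitting point between the $S$-preserving prefix and the $S$-changing tail is canonical across all weakly ascending chains,'' is essentially equivalent to the assertion you are trying to establish, and the justification offered (``the start-set trajectory being determined by the label sequence'') is circular, since the label sequence is precisely what must be shown unique. You would also need to address the fact that, even when $u_1$ is $S$-preserving, a competing first step $w_1$ of a rival chain $C'$ need not lie in the same Bruhat sub-interval as $u_1$ (it may be $S$-changing, or $S$-preserving but not below the top of $C$'s $S$-preserving prefix), so Lemma~\ref{Bruhat-isom} alone does not pin down the first step in general. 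What you do have that is correct: the bottom-up greedy construction of the candidate chain together with thinness of $[u,u_2]$ is a legitimate variant of the paper's observation that the lexicographically first chain is topologically ascending, and your handling of the $S(u)=S(v)$ case via Lemma~\ref{Bruhat-isom}, Corollary~\ref{start-set-cor}, and Dyer's theorem follows the paper.
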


\begin{proof}
Let $C$ be a saturated chain from $u$ to $v$  which has no topological descents.   At least one such saturated chain exists, since 
the lexicographically first saturated chain from $u$ to $v$ takes this form.  Since Lemma 
~\ref{descent-is-topol-descent} proved that each descent is a topological descent, we are assured 
that the labels for $C$ are weakly ascending.  For $C$ comprised of cover relations 
$u = v_0 \prec v_1 \prec v_2 \prec \cdots \prec v_r \prec v_{r+1} = v$, this means we have 
$$\lambda (u,v_1) \le \lambda (v_1,v_2)\le \cdots \le \lambda (v_r, v).$$
By definition of our label ordering $<_{\lambda }$, 
all of the labels on $C$  of the form $(i,j) $ for $i<j$ must occur 
 lower on the saturated chain  than all of its labels 
which are of the form $(j',i')$ for $j'>i'$.  The labels of the form $(i,j)$ with $i<j$ must proceed upward in $C$  from smallest to largest value of $i$, breaking ties by proceeding from smallest to largest value of $j$.  Otherwise, we would have a descent, and hence by Lemma ~\ref{descent-is-topol-descent} we would have a topological descent.  
Likewise, 
the labels of the form $(j',i')$ for $j'>i'$ must proceed upward in $C$ from largest to smallest value of $i'$, breaking ties by proceeding from largest to smallest value of $j'$.

Our plan is to show that such a saturated chain $C$ from $u$ to $v$ comprised entirely of topological ascents
is uniquely determined by the associated words  $w(u)$ and $w(v)$. 
When we have $S(u) = S(v)$, then 
the result follows  immediately from  Proposition ~\ref{start-set-order},   
 Corollary ~\ref{start-set-cor}, and Lemma ~\ref{Bruhat-isom} since these results 
 show in this case   that $[u,v]$ 
 is isomorphic to a type A Bruhat interval with our labeling restricting to a Bruhat order reflection order EL-labeling  for this interval (using Dyer's results in \cite{Dy93} guaranteeing this is indeed an EL-labeling for Bruhat order); in particular, this guarantees  there is a unique saturated chain  with weakly ascending labels in this case. 
Therefore, we may assume  henceforth  that  $S(u) \ne S(v)$. 
   In fact, it will suffice by this same reasoning to prove that
 the portion $v_m \prec v_{m+1} \prec  \cdots \prec v_r \prec v$ of $C$ having (1) $S(v_m) = S(u)$ and (2)  
 $S(v_{m+1})\ne S(u)$ is uniquely determined.  By definition of our labeling, this will be exactly the part of $C$ using labels $(j',i')$ with $j'>i'$.
 Now we turn to this task, in particular showing that such $v_r$ will be uniquely determined by $u$ and $v$.  Once we do that, the same argument may be applied repeatedly to determine $v_{r-1}$ then 
 $v_{r-2}$ and so on, until eventually reaching some $v_m$ with $S(v_m) = S(u)$.   At that point, $u\prec v_1\prec v_2\prec \cdots \prec v_m$
 is uniquely determined by our above argument handling the $S(u)=S(v)$ case. 

Let us begin by making and verifying  two observations to be used later.
The first observation 
is that proceeding up any cover relation $v_i\prec v_{i+1}$ anywhere  in $C$ either fixes all letters of the form $j_p$ (namely with subscript $p$) in passing from $w(v_i)$ to $w(v_{i+1})$ or else moves one or more such letters rightward while moving a single letter $i_v $ leftward.  The former  situation is what happens for each cover relation labeled $(k,l)$ for $k<l$, since such a cover relation exchanges  some $k_v$ with some $l_v$, 
while the latter describes cover relations labeled $(j,i)$ for $j>i$ which  move one or more letters of the form $j_p $ rightward while moving a single letter $i_v $ leftward to the position of the leftmost of these letters moving  rightward. 
In either case, it cannot happen that a letter $j_p$ moves leftward.  In particular, this implies that each letter $j_p$ whose position is the same in $w(u)$ and in $w(v)$ must be fixed throughout each saturated chain from $u$ to $v$.
Our second observation is 
that when a cover relation $v_i \prec v_{i+1}$ labeled $(j,l)$  for $j>l$ 
 moves a  letter $l_v$ 
leftward in passing from $w(v_i)$ to $w(v_{i+1})$ by moving the letter to a position that was occupied by some
$j_p$ in $w(v_i)$, then we claim that  any letter $k_v$ which appears to the left of $l_v$ in $w(v_i)$ but to the right of 
$l_v$ in $w(v_{i+1})$ must have $k>l$, as explained next.   Otherwise the cover relation $v_i \prec v_{i+1}$ would introduce a double crossing of the wires labeled $k$ and
$l$ in $w(v_{i+1})$  by virtue of $w(v_i)$  necessarily having the 
subsequence  $ k_p ,l_p, j_p, k_v, l_v , j_v$.  But this  would contradict $v_i\prec v_{i+1}$ 
being a cover relation, completing the proof of this claim.  See Figure 8.

\begin{figure}[htb]
\begin{center}
\includegraphics[width=2.5in,angle=0]{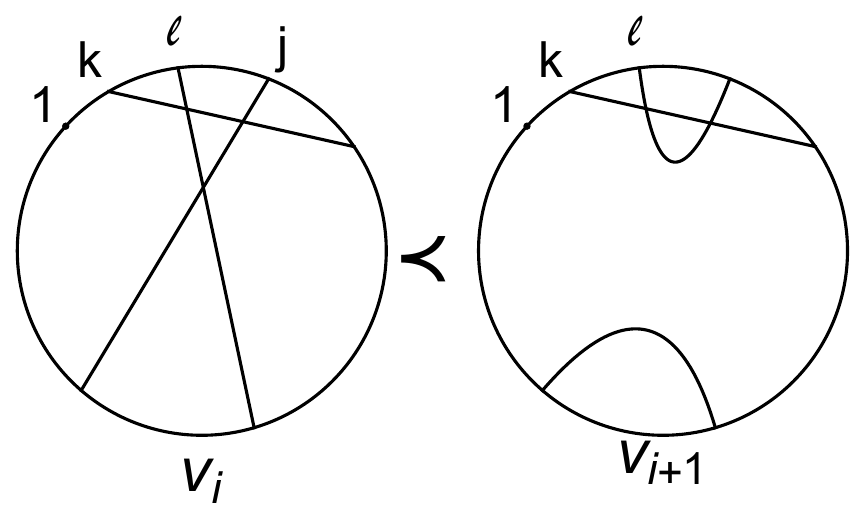}
%\textwidtth, trim={0.5cm 0.5cm 0.5cm 11.3cm}, clip]{figA.pdf}
\caption{$\lambda (v_i,v_{i+1}) = (j,l)$}
\end{center}
\end{figure}

Let us now show  that the cover relation $v_r\prec v $ in $C$ 
must have label $(j,l)$ for $j>l$  for a uniquely 
determined value $l$.  Specifically, we will show that $l$  must be as small as possible among letters $l_v$ which either  appear at a position  in $w(v)$ that is in  $S(u)$ or where $w(v)$ has a subsequence $l_p,m_p,m_v, l_v$ with  $m_v$ appearing 
at a position in $w(v)$ that is in $S(u)$; in the latter case, $l_v$ is 
then the right endpoint of a wire in $v$   having nested below it such a letter $m_v$.   See Figure 9.
If $l$ were not as small as possible with this property, then $C$ would necessarily  have a label 
$(j',l)$ for the same value $l$ and some  $j'>l$ at some point  lower in the saturated chain, since eventually the saturated chain must  move the   letter $j_p$  either to  the  location occupied by $l_v$ in $w(v)$ or to the nested  $m_v$ position described above where $j_p$  appears in  $w(v_m)$ in
that case.  But this  label $(j',l)$  lower on $C$  
will guarantee the existence of a larger label $(j',l)$ lower in the label sequence for $C$ than the 
label $ \lambda (v_r,v) = (j',l')$ with 
$j'>l'>l$ appearing at a higher position in $C$.  In particular, this ensures a descent (and hence a topological descent by Lemma
~\ref{descent-is-topol-descent}) somewhere in $C$, a contradiction, The upshot is that the smaller 
value  $l$ in the label $(j,l)$   is uniquely determined as described  just above. 

\begin{figure}[htb]
\begin{center}
\includegraphics[height=2.5in,width=3in,angle=0]{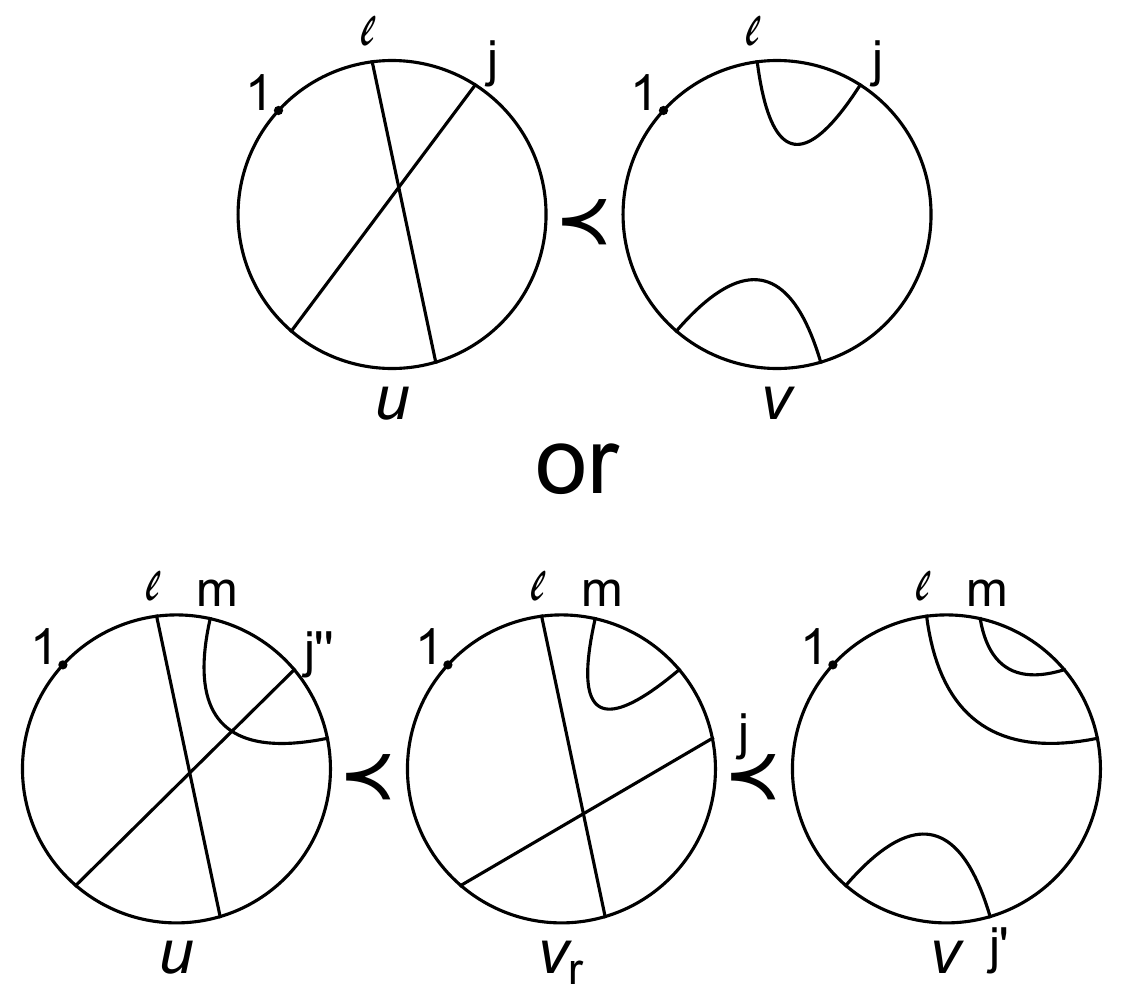}
%\textwidtth, trim={0.5cm 0.5cm 0.5cm 11.3cm}, clip]{figA.pdf}
\caption{$\lambda (v_r,v) = (j,l)$}  %Figure E}
\end{center}
\end{figure}

%\begin{figure}[htb]
%\begin{center}
%\includegraphics[height=1.5in,width=4in,angle=0]{figE2.pdf}
%%%\textwidtth, trim={0.5cm 0.5cm 0.5cm 11.3cm}, clip]{figA.pdf}
%\caption{Figure E2}
%\end{center}
%\end{figure}

Next observe  that the value 
$j$ in the label $\lambda (v_r,v) = (j,l)$ having $j>l$ with $v_r\prec v$ in a topologically ascending chain
is uniquely determined by $w(v)$ and $l$, as follows.
The position of  $j_p$ in $w(v_r)$ is  the position of $l_v$ in $w(v)$,  allowing us to 
determine $j$ from $w(v)$ and $l$  by virtue of  $w(v_r)$ necessarily 
coinciding with $w(v)$ to the left of this position, as explained in Remark ~\ref{two-types-uncrossings}.

  Now suppose there are  two distinct  cover relations   $v_r\prec v$ and $v'_r \prec v$ 
 downward from $v$ both having the same label $(j,l)$ for $j>l$; moreover, suppose that
  $v_r $ and $v'_r$ both belong to topologically 
 ascending chains from $u$ to $v$.    Let us  first check  that
  this necessarily implies that 
 $w(v)$ has a subsequence (a)  $l,l,j,j, t,t$ or (b) $l,l,j,t,j,t $   for $l<j<t$;   the
  other possibility, namely having  the subsequence 
  $l,l,j,t,t,j$ appearing in $w(v)$,  is ruled out 
  by virtue of the fact that  a cover relation must eliminate a single crossing.  See Figure 10.
  Specifically, the need 
  for cover relations     precludes  
  nesting  between the $j$ and $t $ wires in $v$, since the existence of distinct $v_r$  and 
    $v'_r$   necessarily means that  among the downward cover relations $v_r\prec v$ and 
    $v'_r\prec v$, one of these must cross the $l$ and $j$ wires from $v$ while the other must cross
    the $l$ and $t$ wires from $v$.  One thing that may be confusing here is that both cover relations do   receive the same  label $(l,j)$ in spite of one of them involving the $l$ and $t$ wires; this 
    is because the names for the wires, for purpose of labeling  a cover relation, 
    are determined at  the lower element of the cover relation.
  Regardless of whether we are in case (a) or (b), let us make the convention that  $v_r$ is obtained from $v$ by crossing the $l$ and $j$ wires from $v$, while $v'_r$ is obtained from $v$ by crossing the $l$ and $t$ wires from $v$.

\begin{figure}[htb]
\begin{center}
\includegraphics[height=2in,width=5in,angle=0]{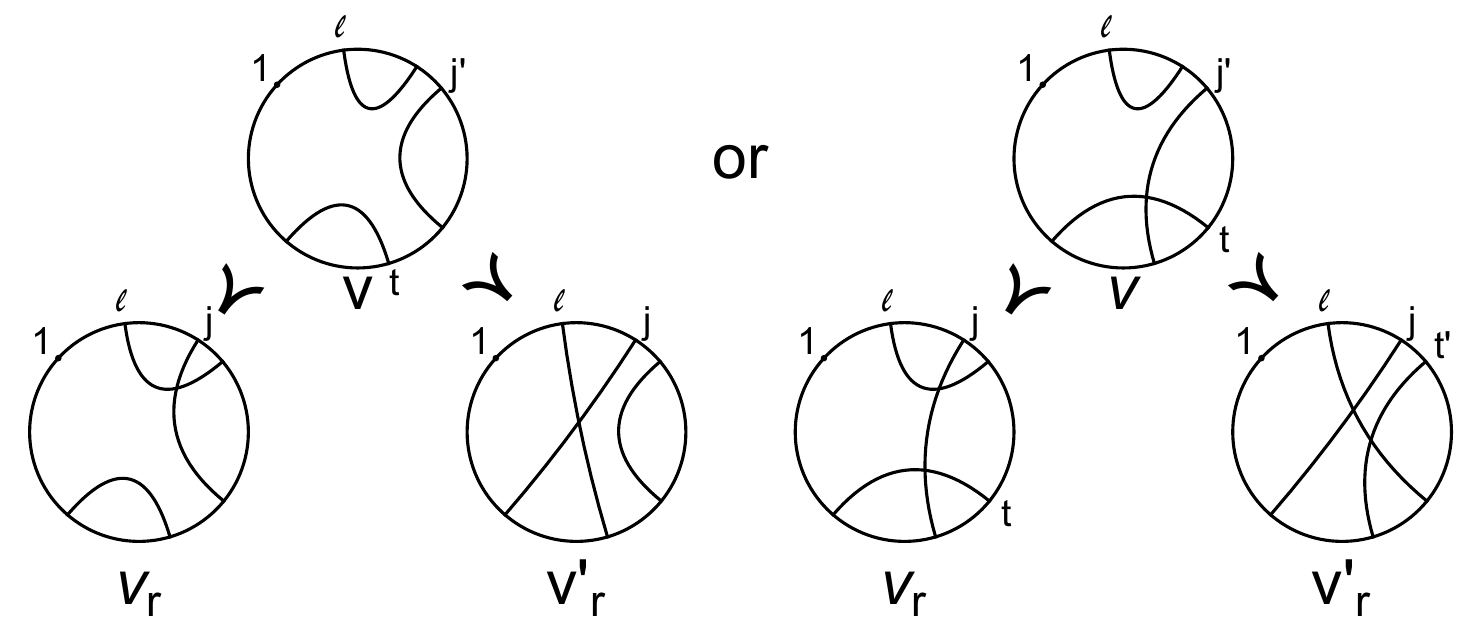}
%\textwidtth, trim={0.5cm 0.5cm 0.5cm 11.3cm}, clip]{figA.pdf}
\caption{$v_r\prec v$ and $v_r'\prec v$}  %Figure F}
\end{center}
\end{figure}

 Now we turn to the task of ruling out  (a), namely the case  where 
  $v_r $ replaces subsequence  $l,l,j,j,t,t$ in $w(v)$ having $l<j<t$ with subsequence  $l,j,l,j,t,t$  in $w(v_r)$
 while $v'_r$ instead replaces $l,l,j,j,t,t$  with subsequence  $l,j,t,t,l,j$ in $w(v'_r)$. 
 We will use the fact  that  saturated chains downward from $v_r$  to $u$ and from $v'_r$ to $u$ eventually do reach a common  element below both of them, so in particular a single shared
  start set  at  this common element; each topologically ascending  
 saturated chain which includes  $v_r$  will therefore  need a label  $(t,l)$ 
  somewhere lower in the saturated chain so as to move the leftmost copy of $t$ leftward to its 
  position in this common start set.  To see why we definitely will need such a label $(t,l)$, it helps to 
  notice that the part of the chain which impacts the start set is limited to downward steps which each move a single label of the form $i_v $ to the right, moving a label of the form $j_p$ into the position $i_v$ had occupied; although  it is possible that the positions of $l_p,j_p,t_p$ could move even farther to the left prior to reaching a common lower bound for $v_r$ and $v'_r$,  that would necessitate a larger label than $(j,l)$ lower in each saturated chain, forcing descents (and hence topological descents by Lemma ~\ref{descent-is-topol-descent}) in the aforementioned  proposed  topologically ascending saturated chains containing $v_r$ and $v_{r'}$, 
  enabling us to rule out that possibility.   Thus, we deduce 
  the clam about needing the label $(t,l)$ lower in our saturated chain downward from $v_r$ to a common lower bound.

  This lower copy of the label $(t,l)$ in the proposed saturated chain involving $v_r$ 
   will force a topological descent somewhere in the saturated chain,  as we explain
 next.  Proposition ~\ref{j-i-topol-descent} directly handles the possibility of consecutive labels 
 $\lambda (v_{r-1},v_r) = (t,l)$ and $\lambda (v_r,v) = (j,l)$ of the form described  
 above with $t$ chosen as small as possible, by the case analysis in the proof of Proposition ~\ref{j-i-topol-descent} yielding a topological descent in the case that describes our scenario 
 (which translates to case (c) in the proof of Proposition ~\ref{j-i-topol-descent}).  In  the ``non-consecutive case'',  namely the case where $(t,l)$ appears lower in the saturated chain rather than directly below the label $(j,l)$ with the further assumption that the intermediate labels are not
 all of the form $(t',l)$ for $j<t'<t$,  
 we use   the fact that there will be one or more  other labels at intermediate positions.  This would necessarily force a descent (and hence a topological descent by Lemma ~\ref{descent-is-topol-descent}) somewhere on the segment of labels beginning and ending with these two labels, by virtue of some  label at an intermediate position necessarily either being smaller than both of these labels $(t,l)$ and $(j,l)$ or being  larger than both of these labels, due to our very assumption about the labels with second coordinate $l$ and larger first coordinate being non-consecutive.   The upshot is that  
 we get a contradiction to having $v_r \prec v$ and $v'_r\prec v$ both labeled $(j,l)$ and both belonging to topologically ascending chains from $u$ to $v$ when we are in  case (a) above, namely  the case
 with $w(v)$ including subsequence $l,l,j,j,t,t$.  

 Case (b), namely the case  with subsequence $l,l,j,t,j,t$ in $w(v)$ again having $l<j<t$, is  
 likewise  ruled out by  a completely analogous argument which  will be 
 largely left to the reader.  What  makes the argument  work again  in this case 
 is that  $w(v_r) $ now has subsequence $l,j,l,t,j,t$ and $w(v_{r-1})$ has subsequence 
 $l,j,t,l,j,t$ in the event of consecutive labels $\lambda (v_r,v) = (j,l)$ and 
 $\lambda (v_{r-1},v_r) = (t,l)$, which means that when we now apply  
 Proposition ~\ref{j-i-topol-descent} in this case, we again find ourselves in a scenario giving a descent (and hence a 
 topological descent by Lemma ~\ref{descent-is-topol-descent}), yielding a contradiction; that is, we find ourselves in  the 
 scenario labeled as case (b) within the proof of  Proposition ~\ref{j-i-topol-descent}. 
\end{proof}

Next we handle a situation that required special care within the proof of Lemma ~\ref{semi-mod}, 
hence was split off as a separate proposition.

\begin{proposition}\label{j-i-topol-descent}
Given $u\prec v \prec w$ in $P_n^*$ with $\lambda (u,v) = (k,i)$ and $\lambda (v,w) = (j,i)$ for $i<j<k$ carrying out a pair of consecutive uncrossing steps involving a total of three wires, then there exists $u\prec v' \prec w$ either with label sequence $\lambda (u,v') = (j,i)$ and 
$\lambda 
(v',w) = (j',k')$ for some $j'<k'$  or with label sequence $\lambda (u,v') = (j,k)$ and $\lambda (v',w) = (j,i)$.  In the former case, $u\prec v\prec w$ comprises a topological ascent, and in the latter case $u\prec v \prec w$ comprises a topological descent.
\end{proposition}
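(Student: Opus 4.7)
The plan is to translate the hypothesis on consecutive labels $\bigl(\lambda(u,v),\lambda(v,w)\bigr)=\bigl((k,i),(j,i)\bigr)$ with $i<j<k$ into explicit restrictions on how the six letters $\{i,i,j,j,k,k\}$ appear in $w(u)$, $w(v)$ and $w(w)$, then construct the alternate saturated chain $u\prec v'\prec w$ directly at the level of words.

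First I would use Definition~\ref{edge-label-def} together with Remark~\ref{two-types-uncrossings} to read off the ``right-turning'' action of each label: $\lambda(v,w)=(j,i)$ with $j>i$ forces $w(v)$ to contain the subsequence $i,j,i,j$ and $w(w)$ to contain $i,i,j,j$ in these positions (possibly after a renaming of higher-indexed wires so that $p$-subscripts remain in ascending order), and likewise $\lambda(u,v)=(k,i)$ forces $w(u)$ to contain $i,k,i,k$ and $w(v)$ to contain $i,i,k,k$. Because only three wires are involved and wire names are assigned in order of first appearance, the letter $j_p$ in $w(u)$ must lie between $i_p$ and $k_p$, while $j_v$ may sit in several places. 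Running through the compatible positions for $j_v$, subject to the constraint that $w(u)$ restricted to $\{i,i,k,k\}$ reads $i,k,i,k$ and that the eventual $v$-level subsequence $i,j,i,j$ is produced by the first uncrossing and its attendant renaming, yields a short list of admissible configurations of the multiset $\{i,i,j,j,k,k\}$ in $w(u)$; I would label these as cases (a), (b) and (c).

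For each case I would exhibit $v'$ by performing the two uncrossings in the opposite order on $u$. In each case I would verify via Lemma~\ref{noncross-down} that the first alternate uncrossing is indeed a cover relation and that the second recovers $w$, then read $\lambda(u,v')$ and $\lambda(v',w)$ directly from Definition~\ref{edge-label-def}. In case (a), in which wire $j$ sits ``outside'' the nesting region of wires $i$ and $k$ in $w(u)$, the alternate labels come out as $\bigl((j,i),(j,k)\bigr)$; since Definition~\ref{label-order} places $(k,i)<_\lambda(j,i)$ by reverse-linear order on the first coordinate among labels with second coordinate $i$, the original label sequence is lex-smaller, so $u\prec v\prec w$ is a topological ascent. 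In cases (b) and (c) the alternate labels come out as $\bigl((j,k),(j,i)\bigr)$; since $(j,k)$ with $j<k$ is of linear type and $(k,i)$ with $k>i$ is of reverse type, Definition~\ref{label-order} gives $(j,k)<_\lambda(k,i)$, so the alternate chain is lex-smaller and $u\prec v\prec w$ is a topological descent.

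The main obstacle is the renaming bookkeeping: a right-turning uncrossing of wires $i$ and $k$ at $u$ can force a relabeling of any wire whose first endpoint in $w(u)$ lies strictly between $i_p$ and $k_v$, and the name ``$j$'' in the second label $(j,i)$ is read off $w(v)$ after this relabeling. Matching post-renaming names back to the physical wires in $u$ is the delicate point, since it determines in each configuration which of the two candidate alternate uncrossings at $u$ is legal (no double crossing introduced); once this is sorted out, the computation of the alternate label pair and the $<_\lambda$ comparison in each case is immediate and completes the proof.
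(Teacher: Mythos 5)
Your plan closely parallels the paper's proof: both enumerate the possible configurations of the six letters $\{i,i,j,j,k,k\}$ within $w(u)$ (the paper lists the four patterns $i,j,k,i,k,j$; $i,j,k,i,j,k$; $i,j,k,j,i,k$; $i,j,j,k,i,k$, of which the last is ruled out), then construct the alternate chain $u\prec v'\prec w$ and compare label sequences under $<_\lambda$. Your computed alternate label sequences $\bigl((j,i),(j,k)\bigr)$ in case (a) and $\bigl((j,k),(j,i)\bigr)$ in the remaining viable cases agree with the paper, as do the $<_\lambda$ comparisons.

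There is, however, a genuine gap. For the topological-descent conclusion (your cases (b), (c)), exhibiting a single lexicographically smaller alternate chain suffices, and your plan handles that. But the topological-ascent conclusion in case (a) requires that $(\lambda(u,v),\lambda(v,w))$ be lexicographically smallest among \emph{all} saturated chains from $u$ to $w$, not merely smaller than the one alternate chain you construct by reversing the order of the uncrossings. Your plan exhibits a $v'$ but never argues it is the only other element strictly between $u$ and $w$; a priori the rank-two interval $[w,u]$ in $P_n^*$ could contain additional middle elements (the crossing set does not determine a wire diagram, so one cannot conclude uniqueness by counting crossings). The paper closes this gap by invoking Lam's result that $P_n$, hence $P_n^*$, is Eulerian: together with gradedness, the thinness property forces the rank-two interval to have exactly four elements, hence exactly two saturated chains. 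Without appealing to the Eulerian/thin property, or alternatively carrying out a direct enumeration via Lemma~\ref{noncross-down} showing there is exactly one such $v'$, the topological-ascent claim in case (a) is not justified.

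A minor further point: your sketch mentions three admissible cases, but the raw enumeration actually produces four patterns, one of which must be explicitly eliminated (the pattern $i,j,j,k,i,k$ forces wires $i$ and $j$ to be nested rather than crossing in $v$ after the $(k,i)$ uncrossing, contradicting $\lambda(v,w)=(j,i)$). Your plan should make this elimination step explicit rather than folding it silently into the enumeration.
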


\begin{proof}
The existence of $u\prec v$ with $\lambda (u,v) = (k,i)$ implies that $w(u)$ has subsequence
$i,k,i,k$.   The $i<j<k$ requirements implies that the first copy of $i$ is to the left of the first copy of
$j$ which is to the left of the first copy of $k$.  These restrictions imply that 
the only viable  possibilities for the subsequence of $w(u)$ with letters $i,j,k$ are 
(a) $i,j,k,i,k,j$, (b) $i,j,k,i,j,k$, (c)  $i,j,k,j,i,k$ or (d) $i,j,j,k,i,k$.  See Figures 11, 12, 13 and 14 for illustrations of how we handle these cases.
   In each case, we will  use the result of Thomas Lam from \cite{La14a} 
that $P_n$ (and hence $P^*_n$) is  Eulerian; it follows immediately from this 
and the gradedness of $P^*_n$ that $\mu_{P_n^*} (u,w) = (-1)^2$.  This 
in turn implies for each saturated chain 
$u\prec v\prec w$ the existence of a unique element $v'$ satisfying $u\prec v' \prec w$, by 
Remark ~\ref{eulerian-then-thin}.

\begin{figure}[htb]
\begin{center}
\includegraphics[width=3in,angle=0]{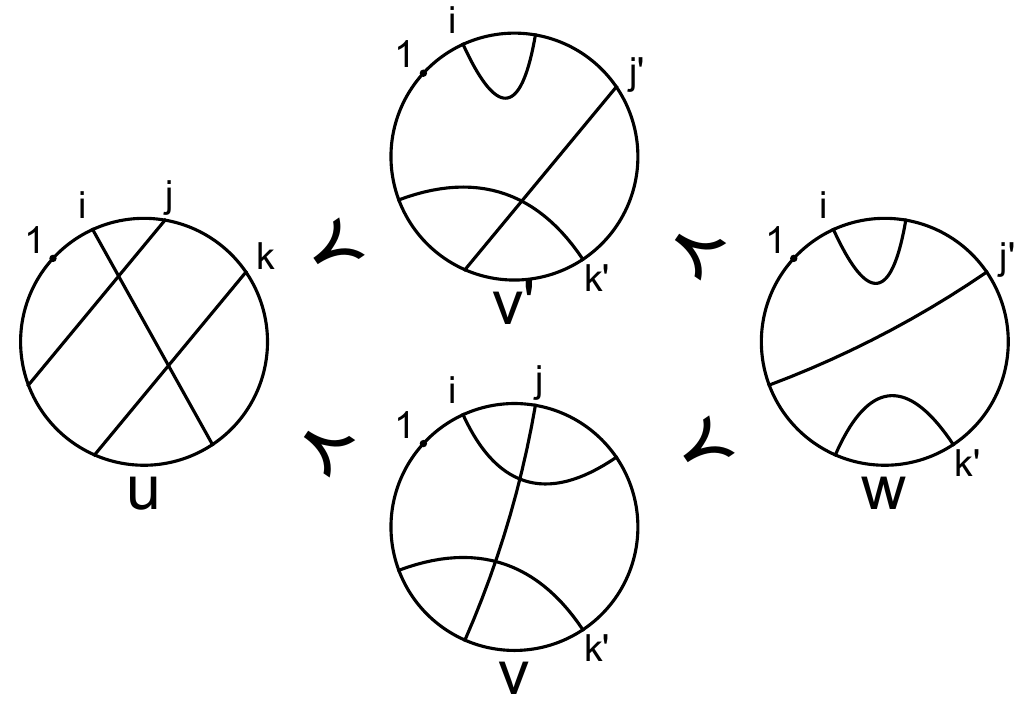}
%\textwidtth, trim={0.5cm 0.5cm 0.5cm 11.3cm}, clip]{figA.pdf}
\caption{Case a: $u\prec v \prec w$ topological ascent}  %Figure Ga}
\end{center}
\end{figure}

\begin{figure}[htb]
\begin{center}
\includegraphics[width=3in,angle=0]{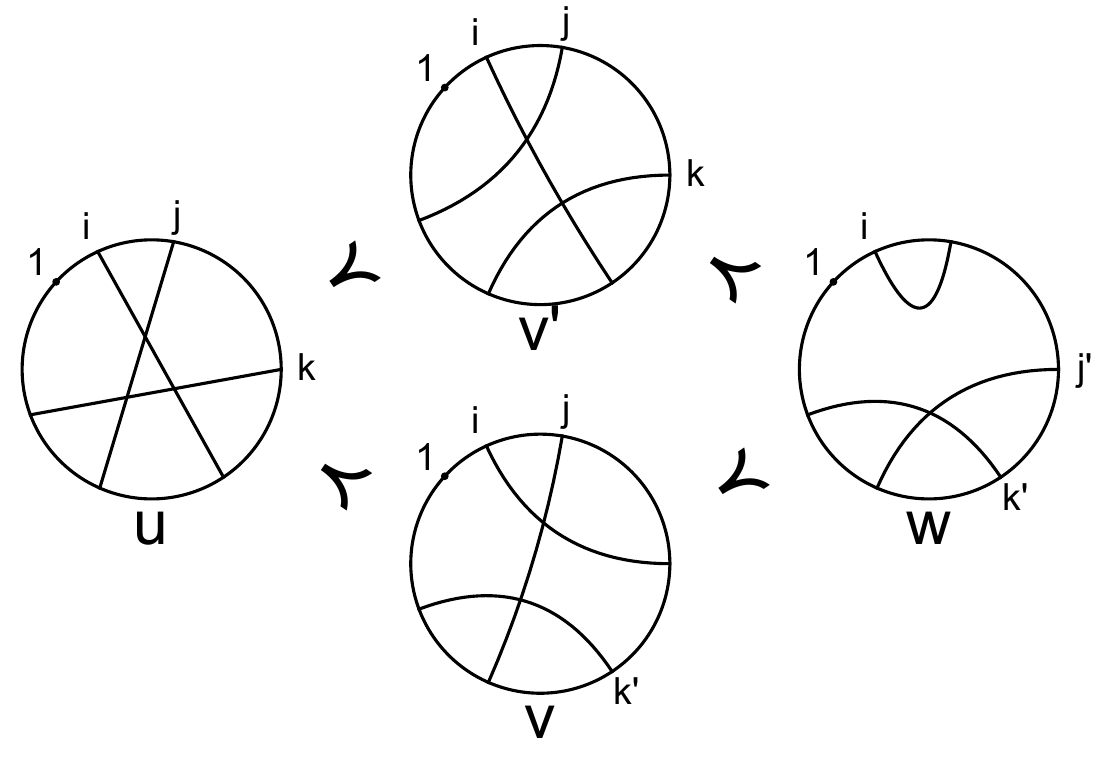}
%\textwidtth, trim={0.5cm 0.5cm 0.5cm 11.3cm}, clip]{figA.pdf}
\caption{Case b: $u\prec v \prec w$ topological descent}  %Figure Gb}
\end{center}
\end{figure}

\begin{figure}[htb]
\begin{center}
\includegraphics[width=3in,angle=0]{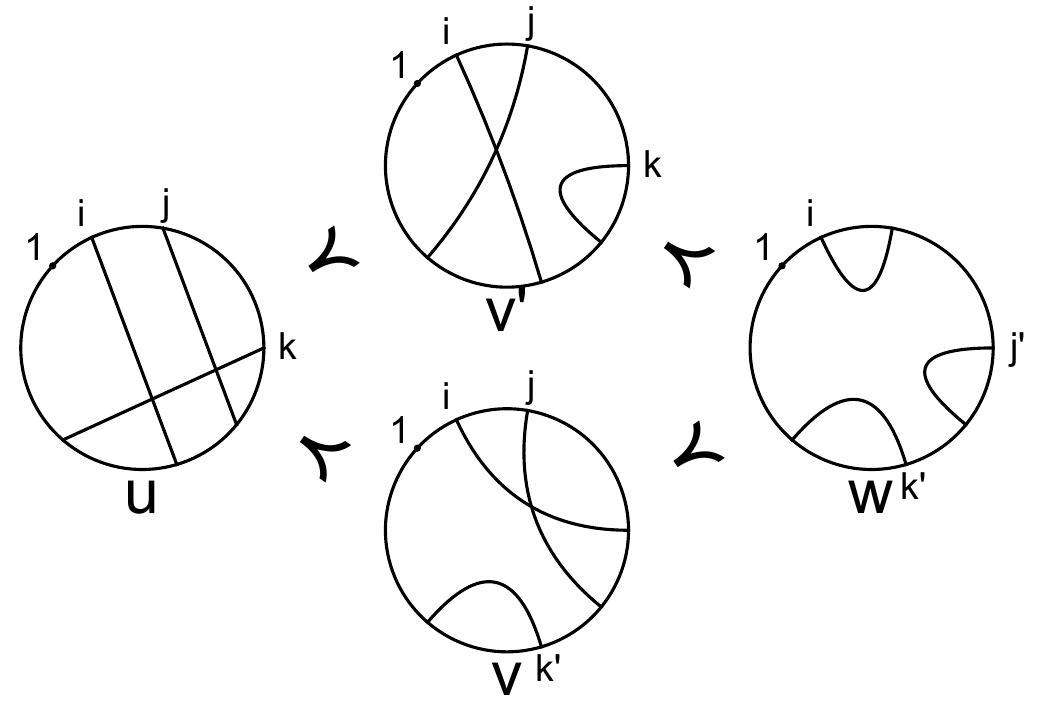}
%%%%\textwidtth, trim={0.5cm 0.5cm 0.5cm 11.3cm}, clip]{figA.pdf}
\caption{Case c: $u\prec v\prec w$ topological descent}  %Figure Gc}
\end{center}
\end{figure}

\begin{figure}[htb]
\begin{center}
\includegraphics[width=3.5in,angle=0]{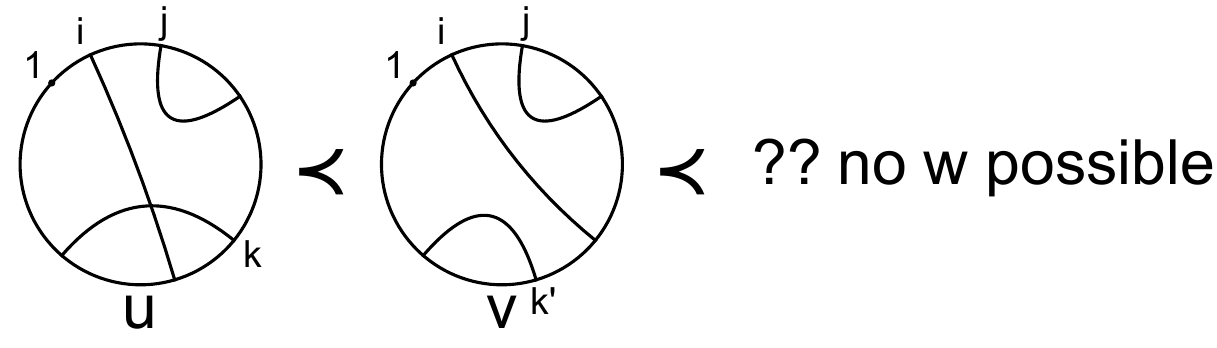}
%\textwidtth, trim={0.5cm 0.5cm 0.5cm 11.3cm}, clip]{figA.pdf}
\caption{Case d: cannot arise due to contradiction}
\end{center}
\end{figure}

In case (a), namely the case with $w(u)$ having the
subword $i,j,k, i, k,j $, the $i$ wire crosses both the $j$ wire and the $k$ wire in $u$, but there is no crossing of the $j$ and $k$ wires in $u$. 
The three wires $i,j,k$ have a total of two crossings, which may be uncrossed in either order to obtain 
$w$.   Observe that one of the uncrossing sequences yields the labels $\lambda (u,v) = (k,i) $ and $\lambda (v,w) = (j,i)$, while   the other uncrossing sequence yields  $\lambda (u,v') = (j,i)$ and $\lambda (v',w) = (j',k')$ for some $j'<k'$.   These are the only saturated chains from 
$u$ to $v$ since both these uncrossings must be accomplished.  The latter saturated chain from $u$ to $v$ gives a descent and indeed is the lexicographically later of the two sequences, hence a topological descent.    Thus, $\lambda (u,v) = (k.i) $ and $\lambda (v,w) = (j,i)$ together give a topological ascent in this case.

For the cases (b) and (c), namely the cases  with $w(u)$ having  subsequences
$i,j,k,i,j,k$ and $i,j,k,j,i,k$, respectively,  a similar analysis applies, yielding that $u\prec v \prec w$ is a topological descent in these cases.
The $k$ wire crosses  both the $i$ wire and the $j$ wire in $u$ in each of  these cases.  
One may check both for case  (b) and for case  (c) 
that the unique  $v'$ satisfying $u\prec v' \prec w $ 
yields $\lambda (u,v') = (j,k)$ with $j<k$.  
Our order  $<_{\lambda }$  implies
$(j,k) <_{\lambda } (k,i)$ since the former has $j<k$ while the latter has $k>i$.  One may also
observe that in each case we have 
$\lambda (v',w) = (j,i)$, yielding the desired lexicographically earlier 
$u\prec v' \prec w$.  In particular, in each of the cases (b) and (c), we see that
 $u\prec v\prec w$ is a topological descent, as desired.  

In case (d), the case with $w(u)$ containing the subsequence $i,j,j,k,i,k$,  we deduce from   $\lambda (u,v) = (k,i)$ that  $w(v) = i,j,j,i,k,k$.  This contradicts the existence of a cover relation $v\prec w$ uncrossing the $i$ and $j$ wires, since these wires are nested rather than crossing in $v$.  Thus, (d) is ruled out.  
\end{proof}

  \begin{remark}
  {\rm 
  Figure 4 also exhibits the fact that the edge labeling $\lambda $ is not an EL-labeling in general, 
  because there are rank 2 intervals having two different ascending chains, the lexicographically later of which is a topological descent but not an actual descent.  Such examples are what led us instead to  prove
   that $\lambda $ satisfies the more relaxed requirements to be an EC-labeling, which still yields a lexicographic shelling.
   }
  \end{remark}
  
Now we turn to intervals $[D,\hat{1}]$.  We begin by showing that the lexicographically first saturated chain from $D$ to $\hat{1}$ (with respect to edge labeling $\lambda $)  has weakly ascending labels.  This will be a key tool to proving in Lemma ~\ref{0-unique} that this saturated chain  is the only topologically ascending saturated chain from $D$ to $\hat{1}$.

\begin{lemma}\label{0-exists-first}
For each $D< \hat{1}$ in $P_n^*$, 
the lexicographically first saturated chain from $D$ to $\hat{1}$ (with respect to edge labeling $\lambda $)  has weakly ascending labels.
\end{lemma}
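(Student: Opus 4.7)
My plan is to construct the lex-first saturated chain from $D$ to $\hat{1}$ explicitly and verify it has weakly ascending labels, leveraging the Bruhat-interval structure of start-set-preserving sub-intervals. Let $D^{nc}$ denote the unique non-crossing wire diagram with start set $S(D)$; this exists and is unique because a valid start set of a wire diagram records a Dyck-style parenthesization, each of which admits a unique non-crossing matching. Since $D^{nc}$ has zero crossings, we have $D \le D^{nc}$ in $P_n^*$. By Theorem \ref{Bruhat-map} the interval $[D, D^{nc}]$ in $P_n^*$ is isomorphic to a type A Bruhat interval, and by Lemma \ref{Bruhat-isom} the restriction of $\lambda$ to $[D, D^{nc}]$ coincides exactly with Dyer's reflection-order EL-labeling. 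In particular, all cover-relation labels inside $[D, D^{nc}]$ have the form $(i,j)$ with $i<j$, since these are precisely the labels assigned in Definition \ref{edge-label-def} to start-set-preserving uncrossings.

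Apply Dyer's Theorem \ref{Dyer-shelling} to produce the unique weakly ascending chain $C^{Dyer}$ from $D$ to $D^{nc}$ in $[D, D^{nc}]$; this is simultaneously the lex-smallest saturated chain there. Because $D^{nc}$ is an atom of $P_n$, its unique upward cover in $P_n^*$ goes to $\hat{1}$ and carries the label $L$ by Definition \ref{edge-label-def}. Let $C^{\star}$ be $C^{Dyer}$ extended by this final cover. Since Definition \ref{label-order} places every label $(i,j)$ with $i<j$ strictly below $L$ in $<_\lambda$, the labels along $C^{\star}$ form the weakly ascending sequence $(i_1,j_1) \le_\lambda \cdots \le_\lambda (i_r,j_r) \le_\lambda L$.

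To conclude that $C^{\star}$ is the lex-first saturated chain from $D$ to $\hat{1}$ in $P_n^*$, I would argue that the greedy procedure, which selects at each step an upward cover of minimum $\lambda$-label, reproduces $C^{\star}$ and therefore coincides with the lex-first chain. Because $P_n^*$ is a graded poset with unique maximum, every upward cover from any $x < \hat{1}$ extends to a saturated chain to $\hat{1}$, so the locally greedy choice at each step yields the globally lex-first chain. At any $x \in [D, D^{nc}] \setminus \{D^{nc}\}$, the upward covers split into start-set-preserving covers (labels $(i,j)$ with $i<j$, remaining inside $[D, D^{nc}]$) and start-set-changing covers (labels $(j,i)$ with $j>i$, leaving $[D, D^{nc}]$). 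The key inequality $(i,j) <_\lambda L <_\lambda (j,i)$ from Definition \ref{label-order} forces greedy to prefer a start-set-preserving cover whenever one is available, and the gradedness of the Bruhat interval $[D, D^{nc}]$ guarantees that such a cover does exist as long as $x \ne D^{nc}$. Hence greedy remains in $[D, D^{nc}]$ and, by the lex-smallest property of Dyer's EL-labeling, its trajectory matches $C^{Dyer}$ until reaching $D^{nc}$, after which only the $L$-labeled cover to $\hat{1}$ is available.

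The main obstacle is precisely this coordination between greedy selection in the full poset $P_n^*$ and Dyer's EL-labeling restricted to $[D, D^{nc}]$: I must be sure that greedy cannot gain anything by taking a start-set-changing step early, and that inside $[D, D^{nc}]$ the greedy chain really does agree with Dyer's unique weakly ascending chain. Both are handled by the label-order inequality from Definition \ref{label-order} together with the extension property in bounded graded posets, so no new combinatorial analysis beyond what is already in Theorem \ref{Bruhat-map}, Lemma \ref{Bruhat-isom}, and Theorem \ref{Dyer-shelling} is needed.
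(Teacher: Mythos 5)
Your route—go up through the unique non-crossing diagram $D^{nc}$ with $S(D^{nc})=S(D)$, use Dyer's EL-labeling on $[D,D^{nc}]$, then append the final $L$-step—is genuinely different from the paper's, which instead directly builds the greedy chain and proves label-ascent by analyzing how wire names can change across an uncrossing step. Your approach is conceptually cleaner and would offload most of the work to Theorem~\ref{Dyer-shelling}. However, there is a real gap at the very first step.

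The assertion ``Since $D^{nc}$ has zero crossings, we have $D \le D^{nc}$ in $P_n^*$'' does not follow: having zero crossings only makes $D^{nc}$ a coatom of $P_n^*$, not an element above $D$. (Most non-crossing diagrams are \emph{not} above a given $D$ in $P_n^*$.) You cannot then invoke Theorem~\ref{Bruhat-map} or Lemma~\ref{Bruhat-isom} for the interval $[D,D^{nc}]$, since applying them presupposes $D\le D^{nc}$—the very thing you need. What is actually required is a proof that any diagram with at least one crossing admits a \emph{start-set-preserving} upward cover in $P_n^*$; then induction on the number of crossings gives a saturated chain of start-set-preserving covers from $D$ up to the unique non-crossing diagram with start set $S(D)$, i.e.\ $D\le D^{nc}$. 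This existence of a start-set-preserving cover is precisely the content of the paper's greedy step, justified via Lemma~\ref{noncross-down} (pick the smallest $i$ in a crossing, then the smallest $j>i$ crossing $i$, and check the conditions of Lemma~\ref{noncross-down} hold for $(i,j)$). Equivalently, one can deduce it from the cover-relation correspondence established inside the \emph{proof} of Theorem~\ref{Bruhat-map} together with the fact that $w_0=\pi(D^{nc})$ lies above every permutation in Bruhat order—but you must say so. A related smaller point: the claim that every start-set-preserving cover of $x\in[D,D^{nc}]$ remains $\le D^{nc}$, and that ``gradedness'' guarantees such a cover exists while $x\ne D^{nc}$, also needs the comparability $x\le D^{nc}$ already in hand; both are consequences of the same missing ingredient. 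Once $D\le D^{nc}$ is in place, the remainder of your argument (greedy prefers $(i,j)$-labels to $L$ and to $(j,i)$-labels, greedy computes lex-first because $\hat 1$ is the unique maximum, lex-first in $[D,D^{nc}]$ is Dyer's ascending chain) is sound.
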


\begin{proof}
We may assume $D$ has at least one crossing, since otherwise $D$ is covered by $\hat{1}$,  a vacuous case.  
Now let us show how to construct a saturated chain from $D$ to $\hat{1}$
with weakly ascending labels.
We start by greedily choosing the smallest 
possible $i$ for which there exists at least one wire $k$ that crosses the 
wire $i$ for $k>i$.   Among such wires that cross  wire $i$, choose the smallest 
$j$ such that wires $i$ and $j$ cross.   See Figure 15.
Lemma ~\ref{noncross-down} justifies that we may proceed 
up  a cover relation  $D\prec D'$ in $P_n^*$ 
by uncrossing these wires $i$ and $j$ in such a way  
that $w(D')$ has the subsequence $i,j, j, i$.  This  ensures  
  $\lambda (D,D') = (i,j)$ with $i<j$, 

\begin{figure}[htb]
\begin{center}
\includegraphics[height=1.25in,width=2.5in,angle=0]{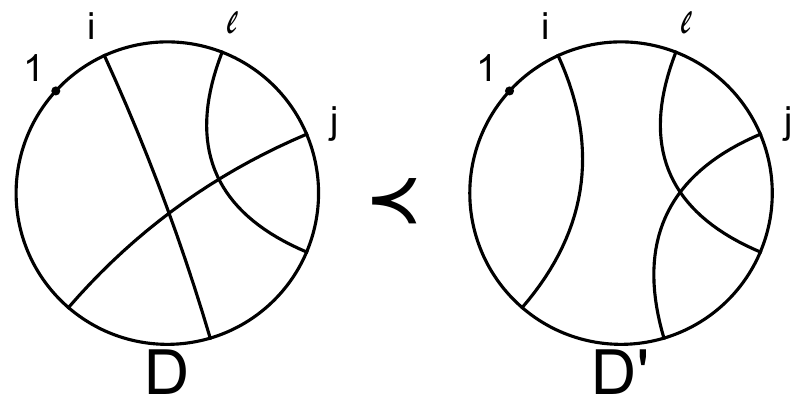}
%\textwidtth, trim={0.5cm 0.5cm 0.5cm 11.3cm}, clip]{figA.pdf}
\caption{$\lambda (D,D') = (i,j)$} %Figure H
\end{center}
\end{figure}

Next we  show that the smallest available label on any cover relation 
upward from the diagram  $D'$ obtained this way is no smaller than 
$(i,j)$. 
To this end, we  analyze the impact of the 
exchange  of $i_v$ and $j_v$ together with the corresponding
uncrossing of wires $i$ and $j$; specifically, we need to constrain how this may change 
  the names of  any pairs of wires that still cross each
other in $D'$ from what their names are in $D$.
If such renaming of wires were to cause a smaller label than $(i,j)$  
to be available for a cover relation upward
from $D'$, this new label would necessarily result from the 
renaming of some $(i,k)$ crossing as $(j,k)$ for $j<k$ and the  
renaming of  some $(j,l)$ crossing as $(i,l)$ for $i<l$, by virtue of exchanging portions of wires 
$i$ and $j$.
Only the new potential label $(i,l)$  could be  smaller than $(i,j)$, and this 
would only happen for $i<l<j$.  But the fact that the $i$ and 
$l$ wires do not cross in $D$,  %which follows from our previous greedy choice for $(i,j)$, 
together with the fact that  the pairs  $(i,j)$ and $(j,l)$ both  do cross in $D$ may all be
combined to deduce that 
$w(D) $ has the subsequence $i,l,j, l,i,j $.
That is,  we have  $l_v$ before  $i_v$ which is before $j_v$ as we proceed clockwise from
starting point $1_p$.  
Exchanging $i_v$ and $j_v$ to obtain $w(D')$ from $w(D)$ will yield $D'$ that 
preserves the fact that $l_v$ comes earlier than $i_v$ in $w(D')$.  This 
contradicts  the availability of the label $(i,l)$ for a cover relation upward from $D'$, since we have
just shown that the  wires $i$ and $l$ do not cross each other  in $D'$.

Applying the above argument  repeatedly as we proceed up a saturated chain greedily choosing
the lexicographically smallest available label at each step, 
we may conclude that each pair of consecutive labels $\lambda (x,y) $ and $\lambda (y,z)$ for
$z < \hat{1}$ is weakly ascending.   By virtue of the construction above, also notice that each
label $\lambda (y,z) = (i,j)$
for $z<\hat{1}$ in the lexicographically first saturated chain has $i<j$, implying the label is smaller
than $L$.  Thus, we also will get an ascent for the pair of labels $\lambda (x,y) $ and 
$\lambda (y,\hat{1})$ at the last step in our lexicographically first saturated chain.  
\end{proof}

Now we complete the case of intervals $[D,\hat{1}]$, showing that  $\lambda $ meets the requirements of an EC-labeling on
all such intervals. 

\begin{lemma}\label{0-unique}
For each $D<\hat{1}$ in $P_n^*$, every saturated chain 
from $D$ to $\hat{1}$  that is not lexicographically first  (with respect to edge labeling $\lambda $) has a topological 
descent.  
\end{lemma}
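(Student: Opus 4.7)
The plan is to split into two cases depending on whether the chain $C : D = v_0 \prec v_1 \prec \cdots \prec v_r \prec \hat{1}$ contains an actual descent. If it does, earlier lemmas already supply a topological descent; otherwise I reduce to Dyer's EL-shelling of Bruhat order and derive a contradiction with the assumption that $C$ is not the lexicographically first chain.

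Suppose first that $\lambda(v_i, v_{i+1}) >_{\lambda} \lambda(v_{i+1}, v_{i+2})$ for some $i$. If $v_{i+2} \ne \hat{1}$, Lemma~\ref{descent-is-topol-descent} produces the topological descent directly. If $v_{i+2} = \hat{1}$, then $\lambda(v_{i+1}, \hat{1}) = L$ forces $\lambda(v_i, v_{i+1}) = (j, i')$ with $j > i'$; since $v_{i+1}$ is a coatom, $v_i$ has a single crossing (of wires $i'$ and $j$), and performing the alternative uncrossing yields a coatom $v'_{i+1}$ with $\lambda(v_i, v'_{i+1}) = (i', j) <_{\lambda} L$, so the label sequence $((i', j), L)$ is lexicographically smaller than $((j, i'), L)$ and $v_i \prec v_{i+1} \prec \hat{1}$ is a topological descent.

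Now suppose $C$ has no actual descents, so $C$ is weakly ascending; the goal is to show $C$ coincides with the chain $C^*$ from Lemma~\ref{0-exists-first}, contradicting the hypothesis. Because $\lambda(v_r, \hat{1}) = L$, each label $\ell_j := \lambda(v_j, v_{j+1})$ with $j < r$ satisfies $\ell_j \le_\lambda L$, and since $L$ labels only covers into $\hat{1}$, each such $\ell_j$ must be of the form $(i, j')$ with $i < j'$. By Definition~\ref{edge-label-def} and Corollary~\ref{start-set-cor}, every cover relation bearing such a label preserves the start set, so $S(D) = S(v_1) = \cdots = S(v_r)$. The terminal element $v_r$ is therefore a coatom of $P_n^*$ whose start set equals $S(D)$; I claim such a coatom is uniquely determined by its start set, since specifying the positions of $p$- and $v$-letters in a word forces a unique non-crossing perfect matching (obtained by pairing each $v$-letter with the nearest unmatched $p$-letter to its left, as in the usual parenthesis-matching argument). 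Hence $v_r$ is forced. By Theorem~\ref{Bruhat-map} the interval $[v_0, v_r]$ in $P_n^*$ is isomorphic to a type $A$ Bruhat interval, and by Lemma~\ref{Bruhat-isom} the restriction of $\lambda$ there coincides with Dyer's reflection order EL-labeling; the uniqueness of weakly ascending saturated chains in an EL-labeling then identifies the sub-chain $v_0 \prec v_1 \prec \cdots \prec v_r$ with the corresponding portion of $C^*$. Since the final step $v_r \prec \hat{1}$ is determined by $v_r$, we conclude $C = C^*$, the desired contradiction.

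The main obstacle is the organizational step in the weakly ascending case: recognizing that such a chain automatically avoids every label $(j, i)$ with $j > i$ and therefore preserves the start set throughout, so the problem collapses onto the Bruhat interval $[v_0, v_r]$ where Dyer's already-developed EL-labeling machinery resolves the uniqueness. The only additional combinatorial input needed is that a coatom of $P_n^*$ is determined by its start set, a standard fact about non-crossing matchings.
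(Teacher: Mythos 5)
Your proof is correct, and it takes a genuinely different route from the paper's. The paper's argument is local: it picks a step $u \prec v$ on the given chain where a lexicographically smaller alternative $u \prec v'$ exists, uses induction together with Lemmas~\ref{0-exists-first} and~\ref{0-big-small} to reduce to the case $\lambda(u,v)=(r,s)$ with $r<s$, constructs a coatom above both $v$ and $v'$ to apply Lemmas~\ref{semi-mod} and~\ref{descent-is-topol-descent}, and closes with a case analysis (disjoint pairs of wires, pairs sharing one wire, the same pair) showing that coatom exists. Your argument is global and avoids that machinery entirely: if the chain has an actual descent you hand it off to Lemma~\ref{descent-is-topol-descent} (plus a one-line argument at the top step, which matches what the paper instead places inside the proof of Theorem~\ref{EL-shell-proof}); if not, weak ascent to $\hat 1$ forces every label before the terminal $L$ to be $(i,j)$ with $i<j$, hence start-set preserving, so the terminal coatom is pinned down by the fact that a noncrossing matching is determined by the positions of its opening endpoints, and the whole chain is then pinned down by the Bruhat-interval isomorphism of Theorem~\ref{Bruhat-map} and Lemma~\ref{Bruhat-isom} together with Dyer's EL-labeling. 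Your version buys a cleaner structure (no coatom-existence case split, no reliance on the general case of Lemma~\ref{semi-mod}, which is the hardest lemma in the paper) at the cost of one extra observation — the start-set-determines-the-coatom fact — which is indeed standard. One citation nit: the statement that an $(i,j)$-labeled cover with $i<j$ preserves the start set is really established in the proof of Proposition~\ref{start-set-order} rather than in Corollary~\ref{start-set-cor}; this does not affect the substance.
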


\begin{proof}
Consider a saturated chain $N = D\prec u_1\prec \cdots \prec u \prec v \prec \cdots \prec \hat{1}$ from 
$D$ to $\hat{1}$  
such that there is $u\prec v$ in $N$  
with $\lambda (u,v') < \lambda (u,v)$ for some $u\prec v' < \hat{1}$. This implies that
there is a lexicographically earlier 
saturated chain from $u$ to $\hat{1}$ involving $v'$ instead of $v$.
 By induction on  $rk(\hat{1}) - rk(v)$, we may assume that the restriction $M$ of  $N$ to the 
 interval $[v,\hat{1}]$
 is the lexicographically earliest saturated chain 
 from $v$ to $\hat{1}$.  The labeling of $M$ must  then consist entirely of 
labels $(i,j)$ having $i<j$ prior to our final label $L$,  by 
Lemma ~\ref{0-big-small} and
Lemma ~\ref{0-exists-first}. 

In particular, $N$  has a descent 
at $v$ unless $\lambda (u,v)$  
is  of the form $(r,s)$ for some $r<s$.  
When there is  some $z \in P_n^* \setminus \{ \hat{1} \} $ %a coatom of $P_n^*$  
that is greater than both $v$ and 
$v'$,  we may deduce the desired result from
Lemmas ~\ref{semi-mod} and ~\ref{descent-is-topol-descent}.
We confirm shortly  that  we  will indeed  have such 
    an upper bound $z$  unless $v$ and $v'$ are obtained from $u$ 
  by uncrossing the same pair of wires in the two different possible ways; 
   but this
  uncrossing of   the same pair of wires $p$ and $q$ 
  in opposite ways     would  give labels  
  $\lambda (u,v') = (p,q)$ and $\lambda (u,v) = (q,p)$ for  $p<q$, contradicting the fact that
  $\lambda (u,v) = (r,s)$ for some $r<s$ with $\lambda (u,v') < \lambda (u,v)$.
     
 Now to the outstanding claim.   In the event that we do not uncross the same pair of wires in
 different ways to obtain $v$ and $v'$, we either move from $u$ to $v$ and $u$ to $v'$ by uncrossing disjoint pairs of wires, or via crossings that share one wire in common.  In either of these  cases, there exists a wire diagram that is an upper bound for $v$ and $v'$
 covering $v$ and $v'$ by doing both of these uncrossings and no other uncrossings.  
 %In particular, there is a single coatom that is greater than both  $v$ and $v'$, as desired.
\end{proof}

Finally, we give a simple technical tool that was used in the proof of Lemma ~\ref{0-unique}.

\begin{lemma}\label{0-big-small}
For each $D<\hat{1}$ in $P_n^*$, every  saturated chain from $D$ to $\hat{1}$ that uses any labels $\lambda (u,v)$ 
of the form  $\lambda (u,v) = (j,i)$ for $j>i$  has a   
descent.  
\end{lemma}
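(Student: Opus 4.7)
The plan is to read the result directly off the label order $<_{\lambda}$ combined with how the edges into $\hat{1}$ are labeled. By Definition~\ref{label-order}, every label of the form $(a,b)$ with $a<b$ satisfies $(a,b)<_{\lambda} L$, while every label of the form $(j,i)$ with $j>i$ satisfies $L<_{\lambda}(j,i)$. By the last clause of Definition~\ref{edge-label-def}, every cover relation into $\hat{1}$ in $P_n^*$, namely each edge from a coatom of $P_n^*$ up to $\hat{1}$, carries the label $L$. So in any saturated chain from $D$ to $\hat{1}$, the terminal label is $L$, whereas any occurrence of a label of the form $(j,i)$ with $j>i$ sits strictly above $L$ in the label order.

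Given a saturated chain $D = u_0 \prec u_1 \prec \cdots \prec u_r \prec u_{r+1} = \hat{1}$ that uses at least one label of the form $(j,i)$ with $j>i$, I take $m$ to be the largest index for which $\lambda(u_{m-1},u_m)$ has that form. Since $\lambda(u_r,\hat{1}) = L$ is not of this form, we have $m \le r$, so $u_{m+1}$ exists. By the maximality of $m$, the label $\lambda(u_m,u_{m+1})$ is either equal to $L$ (in the case $m=r$) or is of the form $(a,b)$ with $a<b$ (in the case $m<r$); in both cases $\lambda(u_m,u_{m+1}) \le_{\lambda} L <_{\lambda} \lambda(u_{m-1},u_m)$, so $u_{m-1}\prec u_m \prec u_{m+1}$ is a descent.

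There is no real obstacle here. The content of the lemma is essentially absorbed into two design choices from the previous section: the decision in Definition~\ref{label-order} to place $L$ strictly between the two families of ordered-pair labels, and the stipulation in Definition~\ref{edge-label-def} that every cover relation into $\hat{1}$ be labeled $L$. Once those are in place, the argument is a one-step pigeonhole on the label sequence, and no case analysis on wire diagrams is required.
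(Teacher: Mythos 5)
Your proof is correct and follows the same approach as the paper: both rest on the observation that labels $(j,i)$ with $j>i$ lie strictly above $L$ in $<_\lambda$ while every saturated chain to $\hat{1}$ ends with label $L$, which forces a descent. You merely spell out the pigeonhole step of locating the last large label, which the paper leaves implicit.
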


\begin{proof}
Note that any cover relation label  $\lambda (x,y) = (j,i)$ for $j>i$  
is larger than $L$, while every saturated  chain upward from $D$ to $\hat{1}$  has $L$ as its final label.  
This already guarantees the presence of a descent  on any  saturated chain from $D$ to $\hat{1}$ involving a label $(j,i)$ for $j>i$.
\end{proof}

\section{Shelling all intervals in Tuffley posets}\label{Tuffley-section}

First we discuss in general terms  the edge product space of phylogenetic trees and then we define more precisely  its face poset, the \emph{Tuffley poset}. 
These are both discussed in much more detail in for instance \cite{MS} and \cite{GLMS}; 
\cite{MS} gives a CW decomposition for this space while \cite{GLMS}  proves that  this is a regular CW decomposition by a proof that involves  first proving the existence of a dual EC-shelling %  CL-shelling \red{EC shelling?} 
for each interval in the Tuffley poset.    We will give a much more explicit shelling for each interval in the Tuffley poset, by showing that 
each interval is isomorphic to an interval in the poset $P_n$ (which we have already proven to be shellable in the earlier sections of this paper).

This will require the notion of the minors of a graph (with sets of labels on some of its vertices), 
as well as notions of edge contraction, edge deletion, and safe edge deletion in this context.

  \begin{definition}
   
  An {\bf edge contraction} in a graph with sets of labels on some of its vertices shortens an edge in the graph  to length 0, 
  identifying its two endpoints with each other and merging their associated (possibly empty) sets of labels to give the set of labels 
  assigned to the new merged vertex.

  An {\bf edge deletion} simply eliminates an edge while keeping its two endpoints as distinct vertices and preserving their associated label sets.
%  disconnecting a tree into two separate trees.  
    %An edge deletion preserves the vertex label sets.  
    An edge deletion is said to be {\bf safe} if it does not result in any vertices with empty label set dropping to degree exactly 2.  
  
   A {\bf minor} of a graph $G$ is any graph obtained from $G$ by a series of edge contractions followed by a series of safe edge deletions.  A minor of one of our upcoming labelled trees will be enriched with a vertex labeling induced by  the labeling of the leaves in the tree for which it is a minor.

  Two minors are said to be {\bf combinatorially equivalent} if there is a graph isomorphism from one to the other that maps the vertex labeling of one minor to the vertex labeling of the other minor.
  \end{definition}

It may be helpful to note that the above  notion of combinatorial equivalence classes  for trees with labels on all of the leaves  
 may alternatively  be defined using  the  set of  ``splits'' of a tree.  
  A {\bf split}  of a tree $T$ with $n$ leaves labeled $1,2,\dots ,n$ 
is a set partition of $\{ 1,2,\dots ,n\} $ into two blocks that is  obtained by deleting one   edge from $T$ and letting the  blocks of the set partition be the sets of leaf labels for  leaves  in the same connected component as each other in the resulting  forest (comprised of  two trees).  A pair of  trees with labeled leaves are 
 combinatorially equivalent  if they have the same splits. % as each other.  

\begin{definition}
The {\bf edge product space $\varepsilon (X) $ of phylogenetic trees} with leaf label set $X = \{ 1,2,\dots ,n\} $ is a stratified space comprised of cells.   The  maximal open cells  of 
$\varepsilon (X)$ are indexed by the combinatorial 
equivalence classes of  trees $T$ with $n$ leaves (i.e. $n$ nodes of degree 1) such that each leaf is assigned a
distinct label from $X$ %, with the   requirement  on $T$ 
%that
 and each non-leaf  node in $T$ has degree exactly 3.
 %; two such  trees are said to be {\bf combinatorially equivalent} 
 %if they are isomorphic as trees with labeled leaves.
 One may check that the trees $T$ indexing the maximal cells all have exactly $2n-3$ edges, i.e.  have $|E(T)|=2n-3$.

The open cell $C(T) $ given by tree  equivalence class $T$  with  $|E(T)|$ edges in $T$  may be parametrized as  the points in 
$\RR_{>0}^{|E(T)|}$, with the $|E(T)|$  coordinates  recording the lengths of the corresponding edges
in $T$, making it an open cell of dimension $|E(T)|  = 2n-3$.  
The lower dimensional cells $C(T')$ in the closure of $C(T)$ are given by exactly those combinatorial equivalence classes of labeled forests $T'$ obtained from $T$ as minors of $T$.
%; we define combinatorial equivalence for forests with label sets on some of the vertices
%by again requiring a graph isomorphism carrying the vertex labels of one forest to the vertex labels on the other forest. 

% as follows.  First, a series of edge contractions are performed on $T$.  

The space $\varepsilon (X)$ also has  an open cell homeomorphic to $\RR_{>0}^{|E(T')|}$  for  each (combinatorial equivalence class of) labeled forest $T'$  
obtainable  by choosing some $T$  as above indexing a maximal cell  $C(T)$ 
and  letting $T'$ be one of its minors obtained by  letting one or more  of the 
  edge lengths in $T$  degenerate either to 0  (via an edge contraction) or  to infinity (via a safe edge deletion).
  %, with each such degeneration 
  %either (a) contracting  the appropriate tree  
 % edge to a point (as the edge length goes to 0) merging the sets of vertex labels of its two endpoints  or (b)  
 % deleting the  appropriate tree 
 %edge (as edge length goes to  infinity),
 % in either case  reducing the number of % tree 
 %edges by one for each such contraction or safe deletion. 
% An edge deletion is safe  if and only if it 
% does not cause  any unlabeled vertex to have its degree become exactly  2.
  \end{definition}
   
Next   we define the face poset for the edge product space of phylogenetic trees.  This is denoted by $S(\{ 1,2,\dots ,n\} )$ in \cite{GLMS}, but we instead denote it by $T(n)$ to avoid confusion with our notation for  ``start set'' earlier in the paper.   
%The definition of $T(n)$   will rely heavily on the notion of graph minor:

  \begin{definition}\label{Tuffley-def}
The {\bf Tuffley poset} $T(n)$ is the face poset for the edge product space of phylogenetic trees with $n$ leaves.  
It has 
as its maximal elements  the various combinatorial equivalence classes of trees $T$  with $n$ leaves labeled 1 to $n$ with exactly one label on each leaf with  the further requirement that  each non-leaf  node has degree exactly  3.   The non-maximal elements of $T(n)$  other than $\hat{0}$ are those
forests $F$  (with collections of  labels on some of the vertices) which  may be obtained from a maximal element of $T(n)$ 
by repeatedly proceeding down cover relations  as follows.      Given any $v\in T(n)$,  first do a  (possibly empty) series of edge contractions, then do a (possibly empty) series of safe edge deletions.
%, where an edge deletion is said to be safe if the result graph does not have any label-free vertices of degree exactly 2.  
%obtains an element   $u\in T(n)$ with  $u <  v$  by deleting  an edge of  $v$  or by  contracting an edge of $v$ (in the latter case assigning the resulting merged vertex  all of the labels present at either endpoint of the contracted edge).  
Elements $u$ obtained in this manner which are combinatorially equivalent to each other
  %(by a natural extension of the notion for trees defined above)  
give the same poset element.   % In the latter case, the sets of labels at the two  endpoints of the edge in $v$ are merged as a label set for the one vertex remaining in $u$ after the edge contraction. 
 
  This process  terminates at  (combinatorial equivalence classes of)  graphs which do not have any edges and which have nonempty collections of labels assigned to each surviving  vertex; %(each of which is one of the original leaves); 
%  and which do have one or more labels assigned to each remaining vertex;
these % combinatorial equivalence classes of edge-free graphs with nonempty sets of labels on each surviving vertex 
are in natural bijection with the set partitions of $\{ 1,2,\dots ,n\} $.  
Finally, a unique minimal element in $T(n)$, denoted
$\hat{0}$, is adjoined. % just below each of  these combinatorial classes of edge-free graphs.  
\end{definition}  

Now we are equipped  to deduce the  shellability of each interval in the Tuffley 
poset $T(n)$  as a corollary to Theorem ~\ref{EL-shell-proof} by verifying the requisite poset isomorphisms.    

\begin{example}\label{small-isom}
Small examples that could  be helpful to  keep in mind while reading the proof of Corollary ~\ref{Tuffley-corollary} are 
%$T(2)$ and $T(3)$, namely 
the Tuffley poset $T(2)$  for a tree with two leaves and no non-leaf vertices and the Tuffley poset $T(3)$ for a  tree with three leaves and one degree 3 non-leaf vertex.  The former tree  will be the medial graph  in the proof below for a wire diagram with two wires that cross each other, leading to each interval in $T(2)$ being isomorphic to an interval in the uncrossing poset $P_2$.  The latter tree will be the medial graph for  a wire diagram with three wires all crossing each other, leading to each interval in $T(3)$ being  isomorphic to an interval in $P_3$.  
\end{example}

\begin{corollary}\label{Tuffley-corollary}
Each interval in $T(n)$ % the face poset of the edge product space of phylogenetic trees, namely in the 
%Tuffley poset, 
is dual EC-shellable by way of a poset isomorphism  to an interval in  the uncrossing poset $P_n$.
%, the poset for which an explicit dual EC-shelling construction has been given
 %derived directly  from the shelling given 
% in  Theorem ~\ref{EL-shell-proof}. % for  uncrossing posets.  
 %In particular, t
 This  shelling may be combined with other known properties to  imply  that the CW decomposition given by 
 Moulton and Steele in  \cite{MS} for the edge product space of phylogenetic trees is a regular CW decomposition.
\end{corollary}

\begin{proof}
Our main task will be to give an isomorphism of each interval of the Tuffley poset  $T(n)$ to an interval in an  
uncrossing poset $P_n$.   Then we may use the fact that our EC-shelling on 
the dual to the  uncrossing poset induces an EC-shelling on each of its intervals.    The fact that an EC-shelling 
of  a poset  induces an EC-shelling on each of its intervals is  immediate 
from the definition of EC-labeling.

  \begin{figure}[h]\label{well-connected-figure}
        \begin{picture}(325,90)(-40,-20)

  \put(-63,1){\line(1,0){30}}
 \put(-63,1){\circle*{4}}
 \put(-33,1){\circle*{4}}      
  
  \put(-52,-17){$G_2$}
%  \put(-53,47){$v_{top}$}
 
% \put(-48,5){\line(0,1){35}}
% \put(-48,5){\line(1,1){10}}
 %\put(-48,5){\line(-1,1){10}}

  \put(-3,1){\line(1,0){60}}
  \put(27,1){\line(0,1){30}}  
    \put(-3,1){\circle*{4}}
 \put(57,1){\circle*{4}}      
 \put(27,31){\circle*{4}}
         
\put(23,-17){$G_3$}
%\put(23,40){$v_{top}$}

       \put(77,1){\line(1,0){90}}
 \put(77,1){\circle*{4}}
 \put(167,1){\circle*{4}}     
 \put(107,1){\line(0,1){30}}
 \put(137,1){\line(0,1){30}}
 \put(107,31){\circle*{4}}
 \put(137,31){\circle*{4}}
 \put(107,31){\line(1,0){30}}

 \put(118,-17){$G_4$}
% \put(120,75){$v_{top}$}
 
% \put(122,35){\line(0,1){35}}
 %\put(122,35){\line(1,1){10}}
 %\put(122,35){\line(-1,1){10}}
 
       \put(197,1){\line(1,0){120}}
 \put(197,1){\circle*{4}}
 \put(317,1){\circle*{4}}     
 \put(227,1){\line(0,1){30}}
 \put(287,1){\line(0,1){30}}
 \put(227,31){\circle*{4}}
 \put(287,31){\circle*{4}}
 \put(227,31){\line(1,0){60}}
 
\put(257,1){\line(0,1){60}}
\put(257,61){\circle*{4}}

\put(253,-17){$G_5$}
%\put(253,70){$v_{top}$}
                
        \end{picture}
        \caption{\label{16}The well connected graphs $G_2$, $G_3$, $G_4$ and $G_5$} 
\end{figure}

%The existence of such an
%isomorphism of poset intervals  
%follows from the r

In constructing a poset isomorphism, we will rely heavily  on the notion of the medial graph associated to a wire diagram (see e.g. Section 4.4.1 in \cite{Ke} and Figure 3 in \cite{Ke} for a specific example of this construction, in which context the term strand diagram is used for what we are calling a wire diagram).   Roughly the idea for this association  is to have each crossing of two wires in a wire diagram  go through the middle of an edge of the associated medial graph, with  the edges of the medial graph  in bijection with the wire crossings in the associated wire diagram.   We will also use the well known fact that the medial graph associated to one choice of embedding of a  wire diagram having all $n$ wires  cross each other, i.e. a  wire diagram  associated to  the maximal element in  the dual uncrossing poset $P_n^*$,   is a so-called well-connected graph $G_n$.  Changing the embedding of the wire diagram  by passing a wire over a crossing (a sort of analogue of a braid move)  will correspond to a so-called Y-$\Delta $ move in the associated graph (see \cite{Ke}) which produces a different well-connected graph.  

Results in Section 3 of \cite{KW}  imply 
that each planar graph (and in particular each tree equivalence graph $T$ with unlabelled leaves) arises as a minor of a well
connected graph, namely as an element $v\ne \hat{1}$ in some dual uncrossing poset $P_m^*$ for $m\ge n$.  
It is not difficult to prove this directly in our setting where we restrict to trees with $n$ leaves such that each non-leaf node has degree 3, by  embedding any such  tree  into the well-connected graph $G_n$ (see Figure \ref{16}).   In case it may help the reader, this is carried out very explicitly in Lemma ~\ref{embedding-trees-in-G_n}.    
%%%
%\old{and in the tree to carry out this embedding, with Example ~\ref{small-isom}  giving the requisite  base cases for this inductive proof of embedding; see Figure \ref{16} for $G_2, G_3, G_4, $ and $G_5$, namely the first few elements in this infinite series of well connected graphs that is particularly convenient for embedding trees with all non-leaves having degree 3 into
%well connected graphs.}
%%%
%For completeness sake, we give our own explicit  such isomorphism in Lemma ~\ref{tree-embedding} by embedding any tree with $n$ leaves (including a root) into a well connected graph with exactly $n$ boundary nodes.
Next observe that embellishing such a  tree $T$ by assigning labels to its leaves does not interfere with the map from \cite{KW} mentioned above  identifying each  $T$  with an element $v$ in $P_m^*$ (or alternatively  with the explicit construction of such a map given in the proof of  Lemma ~\ref{embedding-trees-in-G_n}).  It is also straightforward to check  that  this embellishment of $T$ with  leaf  labels will not interfere with the poset interval isomorphism, as  we describe and justify  next; here it will be important that the boundary nodes in $G_n$ regarded as an electrical network  are exactly those along the upper boundary of $G_n$ (so that we do not introduce any additional safe edge deletions beyond those appropriate to the Tuffley  poset).  

Equipped with this,  let us now show that the interval $[\hat{0},v]$ in $P_m$ is naturally isomorphic to the interval $[\hat{0},T]$ in the Tuffley poset, focusing on the situation without leaf labels and then leaving it to the reader to think through the fact that leaf labels behave just as they should, using our labeled version of edge deletion and contraction and only allowing safe edge deletions.  
The point is to convert the strand diagram for $v\in P_m$ to a minor $G_v$ of a well-connected graph using the bijection appearing in Section 4.4.1 (sending a wire diagram to its medial graph)  in 
\cite{Ke} to guide us.  Observe that the two different ways of uncrossing a pairs of wires exactly correspond to deletion (for one type of uncrossing) and contraction (for the other way of uncrossing the crossing)  of the graph edge corresponding to that wire crossing in the associated medial graph.  Such an uncrossing yields a double crossing if and only if the corresponding edge deletion or contraction yields what is called in \cite{Ke} a non-reduced graph; we will show in our setting (where we start with a tree with leaf labels) that edge contractions never yield non-reduced graphs and that edge deletions yield non-reduced graphs if and only if they are unsafe edge deletions.   To this end, we show next  that because we start with a tree, the only way we can get a non-reduced graph as a minor is to have the edge deletion or contraction lead to the existence of an internal vertex  (i.e. a  vertex not having any leaf labels on it)  of degree 1 or 2.  This situation corresponds to exactly the edge deletions which are not safe (namely exactly those which are not allowed for cover relations in the Tuffley poset) 
 % (which the definition of Tuffley poset precludes from giving cover relations) 
and cannot happen at all  for edge contractions for graphs that are trees or forests.  This will suffice since all minors of trees are themselves either trees or forests.  

To  see this claim, notice that a double crossing of wires means that the same two wires cross in the middle of one graph edge  $e_1$ and also in the middle of another edge $e_2$,  which can only happen  either (a) as a result of either a degree 2 vertex that is not a boundary vertex of the associated planar electrical network (in which case the two wire crossings are in the middle of  the two edges $e_1$ and $e_2$  incident to this vertex) or (b)  from a series  of  connected edges comprising a graph path that begins with $e_1$ and ends with $e_2$  (such that one wire crosses each of the edges in this path)   together with  a distinct series of graph edges (comprising a different path starting with $e_1$ and ending with $e_2$ (such that each edge in this path is  crossed by  the other wire); in particular, this pair of graph paths in (b)  meeting at both ends implies there is a cycle in the graph, a contradiction to the medial graph being a tree or a forest  as is the case for elements in the Tuffley poset.   Situation (a) is exactly the situation created by an unsafe edge deletion.  
 Thus, we have shown that the edge contractions and the safe edge deletions 
correspond not only to the cover relations in the Tuffley poset but also correspond to the cover relations  in the uncrossing poset.  Putting this together, we have shown that  this map from elements of the interval $[\hat{0},v] $ in $P_m $ to corresponding graph minors of the medial graph  $G_v$ for $v$  is  the desired poset interval isomorphism.  

%The final claim in the statement of the corollary is proven in Section 4 of 
%\cite{GLMS}, namely i
It is proven in Section 4 of \cite{GLMS}  that the existence of a shelling for each interval of the Tuffley poset implies that the CW decomposition of Moulton and Steel for the edge product space of phylogenetic trees is a regular CW decomposition.  Thus, our  shelling for each interval of $P(n)^*$ (and hence for each interval in $P(n)$ since a poset and its dual have the same order complex)  in Theorem ~\ref{EL-shell-proof} yields regularity of the CW decomposition of Moulton and Steel in an explicit way; by contrast, the earlier  result in \cite{GLMS}  that a shelling for each poset interval exists was non-constructive.
\end{proof}

\begin{lemma}\label{embedding-trees-in-G_n}
Any tree $T$ having  $n$ leaves and having the further property  that each non-leaf vertex has degree 3 may be obtained 
from $G_n$ by a series of edge contractions followed by a series of safe edge deletions.
\end{lemma}

\begin{proof}
To prove this, our main task will be showing how to construct a spanning tree in $G_n$ from which $T$ may be obtained by a series of edge contractions, specifically only using edge contractions for edges which have  the property that one of the two endpoints of the edge has degree 2 just prior to the contraction.   We do this by producing a suitable embedding $\phi $ of $T$ into $G_n$ which maps  the vertices of $T$ to vertices of $G_n$ and maps each edge $e_{v,w}$ of $T$ to a path in $G_n$ from $\phi (v)$ to $\phi (w)$.  Moreover, the $n$ leaves of $T$ will be  mapped exactly to the $n$ vertices (not all of which have degree 1)  running along the upper boundary of $G_n$ (namely the vertices which are solidly shaded in Figure ~\ref{16}).  % This construction will be based on an induction on $n$.  
Once this has been done, we will be able to obtain $T$ from $G_n$ by first performing precisely this set of edge contractions needed to reduce the spanning tree to $T$, then deleting the edges of $G_n$ what were not used in any of the paths comprising this spanning tree.  One may observe that these edge deletions will all be safe by virtue of each  leaf in $T$ having a label and each non-leaf vertex of $T$ being incident to three edges in $T$; in particular, the endpoints of edges to be deleted must all either have degree at least 3 after the deletion or have a leaf label (due to these upper  boundary nodes of $G_n$ giving exactly  the boundary nodes in the planar electrical network given by $G_n$, 
hence having labels), rendering the edge safe for deletion.  
%See Figure ~\ref{16} for $G_2, G_3, G_4$ and $G_5$ with the upper boundary nodes solidly shaded.

Now to the spanning tree construction.  We will give a construction that is inductive on the number $n$ of leaves   in $T$. % the tree.
%to map each vertex $v$  in $T$ to a vertex $\phi (v)$  in $G_n$ and each edge $e_{v,w}$  
%in $T$ to a path in $G_n$ from $\phi (v)$ to $\phi (w)$.  
Begin by choosing any leaf $l_1$ in $T$.  Let   $\phi (l_1)$ be  the leftmost vertex  %$v_{leftmost}$ 
 in $G_n$, denoted $v_{leftmost}$.  Likewise denote the rightmost node in $G_n$ by $v_{rightmost}$.
Next  we map the  unique edge
$e_{l_1,v}$  that is incident to $l_1$ in $T$ to a path  in $G_n$ which begins with  the edge $e$  in $G_n$ proceeding rightward out 
from $\phi (l_1)$.  
If $T$ has only two nodes altogether, so $n=2$,  we map the node $v$ which is also a leaf to the other endpoint of the edge $e$ in $G_n$, 
and we are done.   For $n>2$,  $v$ cannot be a leaf.   In this case, let  $e_{v,w_1}$ and $e_{v,w_2}$ be the other two edges incident to $v$ besides $e_{v,l_1}$.   Next consider the graph $T - \{ e_{v,w_1}\} $ obtained from $T$ by deleting the edge $e_{v,w_1}$, and let $T_1$ be the connected component of this graph  which contains $w_1$; 
 let $r$ be the number of leaves in the tree   $T_1$.   Let $T_2$ be the tree likewise obtained by deleting the edge $e_{v,w_2}$ from $T$ and taking the connected component containing $w_2$;   one may observe that $n-r-1$ is exactly  the number of leaves in the tree  $T_2$,  since each leaf of $T$ other than $l_1$ belongs to exactly one of these two trees $T_1$ and $T_2$ and since all leaves appearing  in $T_1$ or $T_2$  are leaves of $T$ not equalling $l_1$ and not equalling each other.  We now show how to reduce the  desired spanning tree construction to two strictly  smaller questions of constructing suitable 
 spanning trees within $G_r$ (resp. $G_{n-r-1}$) from which  $T_1$ (resp. $T_2$) may be obtained by a series of edge contractions.  For this 
 construction, it will be important  to choose $T_1$ in such a way that we have $r\le n-r-1$;  this ensures that the 
 subtree $T_1$ which  has (weakly)  
 fewer leaves than $T_2$ is  the subtree that  gets  embedded in $G_n$  by first proceeding one level higher within $G_n$   from $v$ to $v_u$
 in our upcoming construction.    
 One may observe below that this  choice ensures that the embedding will not 
 require more vertical steps upward in $G_n$ than are available within $G_n$.
 
Let $\phi (v)$ be the node in $G_n$ obtained by traversing a path of exactly $r$ edges all proceeding to the right from   $\phi (l_1)$.  
We map the edge 
$e_{l_1,v} \in T$ to the horizontal path with $r$ edges proceeding to the right from $\phi (l_1)$ to $\phi (v)$.    We map the two edges
$e_{v,w_1}$ and $e_{v,w_2}$ outward from $v$ to two  paths outward from $\phi (v)$,  the former of which starts with  the edge $e_{\phi (v),v_u}$ going upward from 
$\phi (v)$ in $G_n$ to the node $v_u$ just above $v$  and the latter of which starts with  the edge $e_{\phi (v),v_r}$ going rightward from $\phi (v)$ in $G_n$ to the node $v_r$ to the immediate right of $v$.   We will use the tree $T_1$ to generate a spanning tree for the portion $G_n^u$ of $G_n$ appearing upward and to the left of $v_u$ (including $v_u$ itself in this spanning tree), while we will use the tree $T_2$ to produce a spanning tree for the portion $G_n^r$  of $G_n$ appearing upward and to the right of $v_r$ (again including $v_r$ itself in this tree).   
To this end, observe that 
$G_{n-r-1} -  \{ v_{leftmost} \} $ sits inside $G_n^r$  as an induced subgraph comprising the bottom-most portion of $G_n^r$.
% Letting $v_{rightmost} $ denote the rightmost vertex in $G_n$,  o
Observe likewise
that  $G_{n-r-1} -  \{ v_{rightmost  } \}  $ sits inside $G_n^u$ as an induced subgraph by doing a right-left reflection of the graph  
$G_{n-r-1}$  and then using the bottom-most portion of $G_n^u$.  
 For the former, we inductively repeat the construction given so far, now letting $v_r$ play the role originally played by the node to the immediate right of $\phi (l_1)$ in $G_n$, using induction to construct a spanning tree of the desired form for  $G_{n-r-1}$; we then augment this with (possibly empty)  paths straight upward from each top node in $G_{n-r-1} -  \{ v_{leftmost} \} $ to the top node directly above it in $G_n^r$, thereby sending the leaves in $T_2$ to exactly the nodes running along the top of $G_n^r$ and using all nodes of $G_n^r$ so we indeed get a spanning tree  from which $T_2$ may be obtained by a series of edge contractions.  We use exactly the same construction to obtain a spanning tree for $G_n^u$ from which $T_1$ may be obtained by a series of  edge contractions, now flipping the role of right and left so as to proceed leftward and upward within $G_n^u$ to construct its  spanning tree.   Observe by virtue of our construction that $v_u$ is not a top node of $G_n$ unless we have $r=1$, ensuring by induction that the embedding process will never get stuck or need to go upward beyond the top of $G_n$.

Notice that each node in $G_n$ outside of the segment from $\phi (l_1)$ to $\phi (v)$ will belong to exactly of the two graphs $G_n^r$ or $G_n^u$.  In particular, our construction shows that 
the path from $\phi (l_1)$ to $\phi (v)$  together with  the edges from $\phi (v)$ to $v_r$ and from $\phi (v)$ to  
$v_h$ together with the spanning trees for $G_n^r$ and $G_n^u$ will all fit together  give the desired spanning tree for $G_n$. 
\end{proof}

\end{document}